\theoremstyle{plain}
\newtheorem{theorem}{Theorem}[section]
\newtheorem{lemma}[theorem]{Lemma}
\newtheorem{proposition}[theorem]{Proposition}
\numberwithin{equation}{section}
\theoremstyle{definition}
\newtheorem{definition}[theorem]{Definition}
\newtheorem{remark}[theorem]{Remark}
\newcommand{\norm}[1]{\lVert #1 \rVert}
\newcommand{\normm}[1]{\left\lVert #1 \right\rVert}
\newcommand{\mb}[1]{{\color{black} #1}}
\def\q {\quad}
\def \l{\langle}
\def \r{\rangle}
\def\bb{\begin{equation}
  \left\{\ 
   \begin{aligned}}
\def\ee{   \end{aligned}
  \right.
  \end{equation}}
\def\mm{ \left[
 \begin{matrix}}
\def\nn{\end{matrix} \right] } 
\def\p{\partial}
\def \dd{\cdot}
\def \t{\times}
\def\n{\nu}
\def \w {\widetilde}
\def \h{\widehat}
\def \ol{\overline}
\def\d{\delta}
\def \vp{\varphi}
\def \si{\sigma}
\def \na{\nabla}
\def \ep{\varepsilon}
\def \lad{\lambda}
\def \Lad{\Lambda}
\def \ddiv {{\rm div}}
\def \sdiv {\sdiv_S}
\def \sd {\mathsf{D}_\si}
\def \NN{\mathbb{N}}
\def \R{\mathbb{R}}
\def \S{\mathbb{S}}
\def \AA{\mathbb{A}}
\def \P{\mathbb{P}}
\def \xx {\mathbb{X}}
\def \Z{\mathbb{Z}}
\def \T{\mathbb{T}}
\def \MM{\mathbb{M}}
\def \TT{\mathcal{T}}
\def \ss{\mathcal{S}}
\def \M{\mathcal{M}}
\def \L{\mathcal{L}}
\def \ce{\mathcal{CE}}
\def \rr{\mathcal{R}}
\def \ff{{\rm F}}
\def \ww{\omega}
\def \V{\mathcal{V}}
\def \hh{\Delta}
\def \jj{\mathcal{J}}
\def \ts{{\tau,\sigma}}
\DeclareMathOperator{\dom}{dom}
\DeclareMathOperator{\tr}{Tr}
\def \mc {\mathcal}
\def \ran {{\rm Ran}}
\def \ker {{\rm Ker}}
\def \sym {{\rm sym}}
\def \ms{\mathsf}
\def \rd {{\rm d}}
\def \Lip{{\rm Lip}}
\title{On the convergence of discrete dynamic unbalanced transport models}
\begin{document}
\author{}
\author{
Bowen Li\footnote{Department of Mathematics, Duke University, Durham, NC 27708, USA. (bowen.li200@duke.edu).}
\and Jun Zou\footnote{Department of Mathematics, The Chinese University of Hong Kong, Shatin, N.T., Hong Kong. 
This author was substantially supported by Hong Kong RGC General Research Fund (projects 14308322 and 14306921) 
and NSFC/Hong Kong RGC Joint Research Scheme 2022/23 (project N\_CUHK465/22). 
(zou@math.cuhk.edu.hk).}
}
\date{}
\maketitle

\begin{abstract}
A generalized unbalanced optimal transport distance ${\rm WB}_{\Lad}$ on 
 matrix-valued measures $\mc{M}(\Omega,\S_+^n)$ was defined in \cite{li2020general1} \`{a} la Benamou-Brenier, which extends the Kantorovich-Bures and the Wasserstein-Fisher-Rao distances. In this work, we investigate the convergence properties of the discrete transport problems associated with ${\rm WB}_{\Lad}$. We first present a convergence framework for abstract discretization. Then, we propose a specific discretization scheme that aligns with this framework, whose convergence relies on the assumption that the initial and final distributions are absolutely continuous with respect to the Lebesgue measure. \mb{Further, in the case of the Wasserstein-Fisher-Rao distance, thanks to the static formulation, we show that such an assumption can be removed.}
\end{abstract}

\section{Introduction}

The optimal transport (OT) problem was initially proposed by Monge in 1781 \cite{monge1781memoire}, and later Kantorovich relaxed it as linear programming \cite{kantorovich1942translocation}, leading to the static formulation of the $2$-Wasserstein distance $\mc{W}_2$. An equivalent dynamic formulation was derived in the seminal work by Benamou and Brenier \cite{benamou2000computational}. Nowadays, OT has become a very active research topic in both theoretical and applied mathematics; see monographs \cite{villani2003topics, villani2008optimal} for a recent overview. While the OT theory has gained popularity in the science community, a known restriction is that \mb{it is only well-defined for} the probabilities (i.e., measures of equal mass). There have been many efforts in the last decades to extend the theory to the case of general positive measures, called the unbalanced optimal transport (UOT). Figalli and Gigli \cite{figalli2010new} extended the Kantorovich formulation of the Wasserstein distance and introduced a UOT model by permitting mass transfer to or from the boundary of the domain. This approach is closely related to the optimal partial transport studied in \cite{figalli2010optimal,caffarelli2010free} by Figalli et al. Another research line for defining unbalanced transport distances is based on the  Benamou-Brenier dynamic formulation, where a source term for the mass change is added to the continuity equation \cite{lombardi2015eulerian,maas2015generalized,benamou2003numerical,chizat2018interpolating,piccoli2016properties}. Piccoli and Rossi \cite{piccoli2014generalized,piccoli2016properties}  introduced a generalized Wasserstein distance in both static and dynamic formulations, which is equivalent to the optimal partial transport under certain conditions \cite{chizat2018interpolating}. More recently, Chizat et al. \cite{chizat2018interpolating} defined the so-called Wasserstein-Fisher-Rao (WFR) distance motivated by the applications in imaging. The same model was introduced
and investigated almost simultaneously by Liero et al. \cite{liero2016optimal,
kondratyev2016new} via different techniques. Later, a unified theory of UOT was established in \cite{chizat2018unbalanced,liero2018optimal} with the concepts of semi-couplings and optimal entropy-transport, which generalizes the optimal partial transport and the WFR distance thoroughly. \mb{We also refer the readers to \cite{fu2023high,li2022computational,gao2022master,lee2021generalized,gangbo2019unnormalized} for some other generalized Wasserstein distances using reaction-diffusion equations and their fast computations with applications in gradient flow and mean-field control problems.}

In addition, there has been a growing interest in extending the theory of OT to the noncommutative realm, with two principal research directions.  The first one is inspired by the ergodic quantum Markov dynamics \cite{gross1975hypercontractivity,kastoryano2013quantum}. Carlen and Maas \cite{carlen2014analog,carlen2017gradient} proposed a quantum analog of Wasserstein distance in the spirit of Benamou-Brenier such that the detailed-balanced quantum Markov semigroup is the gradient flow of the quantum log-relative entropy; see \cite{li2022interpolation} for the extension to a general family of quantum convex relative entropies. 
Meanwhile, a Kantorovich-type quantum transport model was suggested by Golse et al. for quantifying the 
mean-field and classical limits of the Schr\"{o}dinger equation \cite{golse2016mean,golse2017schrodinger,golse2018wave}. The second line of research is motivated by diffusion magnetic resonance imaging, where a tensor is calculated for each voxel \cite{le2014diffusion,wandell2016clarifying}. To compare two tensor fields, Chen et al. \cite{chen2017matrix, chen2020matrix, chen2019interpolation} gave various extensions of the Wasserstein distance to matrix-valued measures in a dynamic formulation, with the help of the Lindblad equation in quantum mechanics. Moreover, an entropy-regularized static quantum optimal transport was proposed by Peyr\'{e} et al. \cite{peyre2019quantum}, where a scaling algorithm generalized from \cite{chizat2018scaling} and applications in diffusion tensor imaging were also investigated; see also \cite{ryu2018vector} for the matrix transport model of order one. We finally emphasize the recent work by Brenier and Vorotnikov \cite{brenier2020optimal}, which is the initial motivation of the percent work, where the authors proposed the so-called Kantorovich-Bures metric on unbalanced matrix-valued measures, with the inspiration drawn from the fact that the incompressible Euler equation can be formulated as a concave maximization problem \cite{brenier2018initial,vorotnikov2022partial}.

In the companion work \cite{li2020general1}, a general unbalanced transport model ${\rm WB}_{\Lad}$ on the space $\mc{M}(\Omega,\S_+^n)$ of positive semi-definite matrix-valued Radon measures, called the weighted Wasserstein-Bures distance, was proposed via an abstract matricial continuity equation \cite[Definition 3.4]{li2020general1}:
$$
\p_t \ms{G} + \ms{D} \ms{q} = \ms{R}^{{\sym}}\,,
$$
and a weighted infinitesimal cost \cite[Proposition 3.1]{li2020general1}: 
$$ 
J_\Lad(G_t, q_t, R_t) = 
\frac{1}{2} (q_t \Lad_1^\dag) \dd G_t^{\dag} (q_t \Lad_1^\dag) + \frac{1}{2} (R_t \Lad_2^\dag) \dd G^{\dag}_t (R_t \Lad_2^\dag)\,.
$$
A similar matrix-valued optimal ballistic transport problem has been considered in \cite{vorotnikov2022partial}; see \cite[Section 7]{li2020general1} for the detailed connections between models in \cite{vorotnikov2022partial,li2020general1}. A comprehensive study of the topological and metric
properties of the model ${\rm WB}_\Lad$ has been given in \cite{li2020general1}, where the authors proved that $(\mc{M}(\Omega, \S_+^n), {\rm WB}_\Lad)$ is a complete geodesic space with a conic structure. These theoretical results justify the reasonability of the weighted distance ${\rm WB}_{\Lad}$. In particular, with specially chosen parameters, the Kantorovich-Bures distance \cite{brenier2020optimal}, the WFR distance \cite{chizat2018interpolating,liero2016optimal,kondratyev2016new}, and the matricial interpolation distance \cite{chen2019interpolation} can fit into the general model ${\rm WB}_\Lad$; see \cite[Section 7]{li2020general1} and also Remark \ref{rem:connect} below. 

This work is devoted to the numerical analysis of the matrix-valued transport model ${\rm WB}_\Lad$. For the classical OT model $\mc{W}_2$, an augmented Lagrangian algorithm was proposed in the seminal work \cite{benamou2000computational}. It has been investigated systematically in the general framework of proximal splitting methods \cite{papadakis2014optimal} and extended to other related problems, such as mean field games \cite{benamou2015augmented} and unbalanced optimal transport \cite{chizat2018interpolating}. In addition, the convergence analysis for this algorithm in the Hilbert space setting was carried out in \cite{guittet2003time,hug2020convergence}.

In this work, our focus lies in studying the convergence of the discretization scheme for the OT model, as it constitutes the fundamental step towards developing a fast algorithm for solving the resulting discrete convex optimization problem. The first result in this direction may be attributed to Gigli and Maas \cite{gigli2013gromov}. They considered a discrete transport distance previously introduced by Maas \cite{maas2011gradient} (which enables the interpretation of finite Markov chains as gradient flows) and demonstrated that the discrete transport metric associated with the random walk on the discrete torus converges to the continuous 2-Wasserstein distance in the sense of Gromov-Hausdorff. It turns out that such a discrete distance coincides with an isotropic finite-volume discretization of the dynamic $\mc{W}_2$ on the torus. These results were later extended by Gladbach et al. \cite{gladbach2020scaling}, where a two-point flux finite volume scheme on a bounded convex domain was considered. Note that by Gromov-Hausdorff convergence, we also have the convergence of 
discrete geodesics (minimizers). Erbar et al. \cite{erbar2020computation} further studied the time discretization based on the semi-discrete approximation in \cite{gladbach2020scaling,gigli2013gromov} and proved a Gamma-convergence when the temporal mesh size goes to zero. It is possible to combine the results in \cite{gladbach2020scaling,erbar2020computation} to obtain a convergent fully discretization scheme for dynamic $\mc{W}_2$, which, however, would involve a restriction on the ratio between the spatial and temporal mesh sizes. 
Carrillo et al. \cite{carrillo2022primal} computed the Wasserstein gradient flow with a finite difference scheme and proved the Gamma-convergence of the fully discrete JKO scheme, which also applies to dynamic $\mc{W}_2$ but relies on a strong regularity assumption on the minimizer. Recently,  inspired by the results in \cite{carrillo2022primal,gladbach2020scaling,erbar2020computation}, Lavenant \cite{lavenant2019unconditional} proposed a general convergence framework for dynamic $\mc{W}_2$ without any assumptions on the regularity and the mesh size, which has been applied to the finite element discretization on the surface \cite{lavenant2018dynamical}, the mixed finite element discretization \cite{natale2022mixed}, and the finite volume scheme \cite{gladbach2020scaling}. We also mention the work \cite{natale2021computation} by 
Natale and Todeschi, where the authors observed numerical instabilities for the finite volume schemes in \cite{gigli2013gromov,gladbach2020scaling} and introduced a new scheme based on a two-level spatial discretization to overcome this issue. 

We continue the investigation of the convergence of discrete transport models in this work. We aim to design \mb{a unified convergent} discretization scheme for the general model ${\rm WB}_\Lad$ (see \eqref{eq:distance} below). The results apply to the Kantorovich-Bures distance \eqref{def:kan_bure}, the matricial interpolation distance \cite{chen2019interpolation}, and the WFR metric \eqref{def:wfr_metric} directly. To achieve this, we first extend the convergence framework proposed in \cite{lavenant2019unconditional} for the 2-Wasserstein distance $\mc{W}_2$ to ${\rm WB}_\Lad$ from the perspective of Lax equivalence theorem in an abstract manner. In particular, we prove the convergence in Theorem \ref{thm:main_conver} for an abstract scheme under a set of verifiable conditions. Then, we propose a concrete fully discrete scheme for our model \eqref{eq:distance}, fitting into this abstract convergence framework, and thus the convergence can be guaranteed; see Theorem \ref{thmconcrete}. However, it requires an additional absolute continuity assumption on the initial and final distributions. For the WFR distance \eqref{def:wfr_metric}, we manage to remove this assumption by leveraging its equivalent static formulation; see Section \ref{sec:specwfr}.

The rest of this work is organized as follows. In Section \ref{sec:notation}, we review the transport distance ${\rm WB}_{\Lad}$ proposed in \cite{li2020general1} with a regularization result Proposition \ref{prop:regular_ep}, which is necessary for the numerical treatment. Section \ref{sec:absconver} is devoted to an abstract convergence framework, while in Section \ref{sec:conver_example} a concrete convergent discretization scheme was suggested. In Section \ref{sec:specwfr}, sharper results are obtained for the WFR distance.

\section{Transport distances on matrix-valued measures} \label{sec:notation}

In this section, we review the general matrix-valued unbalanced transport model defined in \cite{li2020general1} and its connections with existing models \cite{brenier2020optimal,chen2019interpolation,chizat2018interpolating}. We first fix the notations used throughout the work.

\begin{itemize}
 \setlength\itemsep{-0.1mm}
    \item Let $\R^{n \t m}$ be the space of $n \t m$ real matrices. If $m = n$, we write it as $\MM^{n}$. We denote by $\S^n$, $\S_+^n$, and $\S^n_{++}$ the symmetric matrices, positive semi-definite matrices, and positive definite matrices, respectively. $\AA^n$ denotes the space of $n \t n$ antisymmetric matrices. 
    \item We denote by $|\dd|$ the Euclidean norm on $\R^n$ and equip the matrix space $\R^{n \t m}$ with the Frobenius inner product $A \dd B = \tr(A^{\rm T} B)$ and norm  $\norm{A}_\ff = \sqrt{ A \dd A}$. 
    \item The pseudoinverse of a matrix $A \in \R^{n \t m}$ is denoted by $A^\dag \in \R^{m \t n}$ . If $A \in \S^n$ has the eigendecomposition $A = O \Sigma O^{\rm T}$ with $\Sigma = \text{diag}(\lad_1,\ldots, \lad_s,0, \ldots,0)$, then $A^\dag = O \Sigma^\dag O^{\rm T}$ with $\Sigma^\dag = \text{diag}(\lad_1^{-1}, \ldots, \lad_s^{-1},0, \ldots,0)$, where $O$ is an orthogonal matrix and $\{\lad_i\}$ are nonzero eigenvalues of $A$. 

    \item For a matrix $A \in \MM^n$, its symmetric and antisymmetric parts are given by 
    \begin{equation} \label{eq1}
        A^{\sym} = (A + A^{\rm T})/2\,,\q A^{{\rm ant}} = (A - A^{\rm T})/2\,,
    \end{equation}
    respectively. We write $A \preceq B$ (resp., $A \prec B$) for $A, B \in \S^n$ if $B - A \in \S^n_+$ (resp., $B - A \in \S^n_{++}$). 
    \item The following Powers-St{\o}rmer inequality \cite{powers1970free} is useful: 
\begin{align} \label{est:lip_squre}
    \big \lVert\sqrt{A} - \sqrt{B}\,\big\rVert_\ff^2 \le \sqrt{n} \norm{A - B}_\ff\,, \q \forall A,B \in \S^n_+\,.
\end{align}
    \item Let $\mathcal{X}$ be a compact separable metric space with Borel $\sigma$-algebra $\mathscr{B}(\mathcal{X})$. We denote by $C(\mathcal{X},\R^n)$ the space of $\R^n$-valued continuous functions on $\mathcal{X}$ with the supremum norm  $\norm{\dd}_\infty$. Its dual space, denoted by $\M(\mathcal{X},\R^n)$, is the space of
    $\R^n$-valued Radon measures with total variation norm $\norm{\dd}_{\rm TV}$. 
\item Let $\mc{B}$ be a Banach space with dual space $\mc{B}^*$. We denote by $\l \dd, \dd \r_{\mc{B}}$ the duality pairing between $\mc{B}$ and $\mc{B}^*$. When $\mc{B} = C(\mathcal{X},\R^n)$, we usually write it as $\l \dd, \dd \r_{\mc{X}}$ for simplicity. The weak and weak* convergences on $\mc{B}$ and $\mc{B}^*$ are defined in the standard way. In particular, a sequence of measures $\{\mu_j\} \subset \M(\mathcal{X},\R^n)$ weak* converges to $\mu$ if there holds 
    $\l\mu_j,  \phi\r_{\mathcal{X}} \to \l\mu, \phi\r_{\mathcal{X}}$ for any $\phi \in C(\mathcal{X},\R^n)$, as $j \to +\infty$.  
\item Let $\R_+: = [0,\infty)$, and $\M(\mathcal{X},\R_+)$ be the space of nonnegative finite Radon measures. 
For $\mu \in \M(\mathcal{X},\R^n)$, we have  
an associated variation measure $|\mu|\in \M(\mathcal{X},\R_+)$ such that ${\rm d} \mu = \sigma {\rm d} |\mu|$ with $|\sigma(x)| = 1$ for $|\mu|$-a.e.\,$x \in \mathcal{X}$, where $\sigma: \mathcal{X} \to \R^n$ is 
the density of $\mu$ with respect to $|\mu|$ \cite{evans2015measure,rudin2006real}. 
\item One can identify the space of matrix-valued Radon measures  $\M(\mathcal{X},\R^{n \t m})$ with 
$\M(\mathcal{X},\R^{nm})$ by vectorization. 
Note from \cite[Theorem 3.5]{duran1997lpspace} that both sets of $\S^n$-valued measures $\M(\mathcal{X},\S^n)$ and $\S^n_+$-valued measures $\M(\mathcal{X},\S_+^n)$ are closed in $\M(\mathcal{X},\MM^n)$ with respect to the weak* topology. In addition, we have $(C(\mathcal{X}, \S^n))^* \simeq \M(\mathcal{X}, \S^n)$,
where $\simeq$ denotes the isometric isomorphism.
\item For $\mu \in \M(\mathcal{X}, \S_+^n)$, we define an associated trace measure
$\tr\mu$ by the set function $E \to \tr (\mu (E))$, $E \in \mathscr{B}(\mathcal{X})$. It is clear that $ 0 \preceq \mu(E) \preceq \tr (\mu (E)) I$ and $ \tr\mu$ is equivalent to
$|\mu|$, that is, $|\mu| \ll \tr\mu$ and $\tr\mu \ll |\mu|$. In what follows, we usually use $\tr\mu$ as the dominant measure for $\mu \in \M(\mathcal{X},\S_+^n)$. Also, note that for $\lad \in \M(\mathcal{X}, \R_+)$ with $|\mu| \ll \lad$, there holds \mb{$\frac{\rd \mu}{\rd \lad}(x) \in \S^n_+$} for $\lad$-a.e.\,$x \in \mathcal{X}$, which is an alternative definition of $\M(\mathcal{X}, \S_+^n)$.

\item We use $\ms{sans\ serif}$ letterforms to denote vector-valued or matrix-valued measures, e.g., $\ms{A} \in \M(\mathcal{X}, \MM^n)$, while letters with serifs are reserved for their densities with respect to some reference measure, e.g., $A_\lad: = \frac{\rd \ms{A}}{\rd \lad}$ for $|\ms{A}| \ll \lad$. The symmetric and antisymmetric parts $\ms{A}^\sym$ and $\ms{A}^{\rm ant}$ of $\ms{A} \in \M(\mathcal{X}, \MM^n)$ are defined as in \eqref{eq1}. 


\item Given $\lad \in \M(\mathcal{X}, \R_+)$, we denote by $L^p_\lad (\mathcal{X},\R^n)$ with $p \in [1, +\infty]$ the standard space of $p$-integrable $\R^n$-valued functions. For $\ms{G} \in \M(\mathcal{X}, \S_+^n)$, we consider $\R^{n \times m}$-valued measurable functions endowed with the semi-inner product $ \l P, Q \r_{L^2_{\ms{G}}(\mathcal{X})} := \int_\mathcal{X}  P \dd (\rd \ms{G}\, Q) = \int_\mathcal{X}  P \dd \big(G_\lad Q \big)\, \rd \lad$, where $\lad$ is a reference measure such that $|\ms{G}|\ll \lad$ and $G_\lad$ is the density. The Hilbert space $L^2_{\ms{G}}(\mathcal{X}, \R^{n \times m})$ is then defined as the quotient space by $\ker\big(\norm{\dd}_{L^2_{\ms{G}}(\mathcal{X})}\big)$. 

\item For Banach spaces $X$ and $Y$, we 
denote by $\mc{L}(X,Y)$ the bounded linear operators from $X$ to $Y$ and by $C_c^\infty(\R^d,X)$ the $X$-valued smooth functions with compact support. We also use the space $C^k(\Omega, X)$ of $C^k$-smooth functions with norm 
defined by $\norm{\Phi}_{k,\infty} := \sum_{|\alpha| \le k} \sup_{x \in \Omega} \norm{D^\alpha \Phi(x)}_X$, where the derivatives are assumed to exist in the interior of $\Omega$ with continuous extension to the boundary. 

\item We also recall some concepts from convex analysis. For a function $f: X \to \R \cup \{+ \infty\}$ on a Banach space $X$, we denote by $\dom(f) := f^{-1}(\R)$ its domain and $f$ is proper if $\dom(f) \neq \varnothing$. Moreover,  $f$ is positively homogeneous of degree $k$ if for all $x \in X$ and $\alpha > 0$, $f(\alpha x) = \alpha^k f(x)$.  The conjugate function $f^*$ of $f$ is defined by 
\begin{equation} \label{def:conjugate}
    f^*(x^*) = \sup_{x \in X} \l x^*, x\r_X - f(x)\,, \q \forall x^* \in X^*\,,
\end{equation}
which is convex and weak* lower semicontinuous.
\item We denote the indicator function of a set $A$ by
\begin{equation} \label{def:indicator}
    \chi_A(x) = \begin{cases}
        1, & \text{if}\ x \in A\,,\\
        0, & \text{if}\ x \notin A\,.
    \end{cases} 
\end{equation}
\end{itemize}

We next introduce the generalized dynamic unbalanced transport model on matrix-valued measures $\M(\Omega, \S^n_+)$ defined in \cite{li2020general1}, where $\Omega \subset \R^d$ is a compact set with nonempty interior and 
smooth boundary $\p \Omega$. The definition relies on a weighted action functional and an abstract matricial continuity equation. 

For this, let $\Lad := (\Lad_1,\Lad_2)$ be a pair of weight matrices with $\Lad_1 \in \S^k_+$ and $\Lad_2 \in \S_{++}^n$, where $n, k$ are positive integers. Suppose that $\mc{X}$ is a compact separable metric space. We denote $\xx := \S^n \t \R^{n \t k}  \t \MM^n$ for notational simplicity and then $\M(\mc{X}, \xx) = C(\mc{X}, \xx)^*$. We define a proper function on $\xx$ by
\begin{equation} \label{eq:expre_J}
    J_\Lad(X, Y, Z) = 
         \frac{1}{2}  (Y \Lad_1^\dag) \dd (X^{\dag} Y \Lad_1^\dag) 
          + \frac{1}{2} (Z \Lad_2^\dag) \dd (X^{\dag} Z \Lad_2^\dag)\,,
\end{equation}
for $X \in \S_+^n$, $\ran (Y^{\rm T})  \subset \ran ( \Lad_1)$, $\ran (Z^{\rm T})  \subset \ran (\Lad_2)$ and $\ran ([Y,Z]) \subset \ran(X)$ (otherwise $J_\Lad(X, Y, Z) = +\infty$). It was shown in \cite[Proposition 3.1]{li2020general1} that $ J_\Lad$ is the conjugate function \eqref{def:conjugate} of the characteristic function of the closed convex set $\mathcal{O}_\Lad=  \big\{(A,B,C) \in  \xx\,;\  A + \frac{1}{2} B \Lad^2_1 B^{\rm T} + \frac{1}{2} C \Lad^2_2 C^{\rm T}  \preceq 0 \big\}$ and hence it is convex, positively $1$-homogeneous, and lower semicontinuous. Then, for $\mu: = \ms{(G,q,R)} \in \M(\mc{X}, \xx)$, 
 we define a positive measure on $\mc{X}$ by 
\begin{equation} \label{def:costmeasure}
\jj_{\Lad,E}(\mu): = \int_E J_\Lad \left(\frac{\rd \mu}{\rd \lad}\right) \rd \lad\,,
\end{equation}
for measurable $E \in \mathscr{B}(\mc{X})$, where $\lad \in \M(\mc{X},\R_+)$ is a reference measure with $|\mu| \ll \lad$. Note that this definition \eqref{def:costmeasure} is independent of $\lad$, due to the positive homogeneity of $J_\Lad$. We  define the $\Lad$-weighted \emph{action functional} for $\mu \in \M(\mc{X}, \xx)$ by the total measure $\jj_{\Lad,\mc{X}}(\mu)$. 

To introduce the matricial continuity equation, let $\ms{D}^*:C_c^\infty(\R^d, \S^n) \to C_c^\infty(\R^d, \R^{n \t k})$ be a  first-order constant coefficient linear differential operator with $\ms{D}^*(I) = 0$. By a slight abuse of notation, we define $\ms{D}^*\Phi$ for functions $\Phi (t,x)$ on $\R^{1+d}$ by acting it on the spatial variable $x$ and define the operator $\ms{D}$ by the adjoint of $ - \ms{D}^*$ (e.g., $\ms{D} = \ddiv$ for $\ms{D}^* = \na$). We also denote by $\ms{D}_0$ and $\ms{D}_1$ the homogeneous parts of degree $0$ and $1$ of $\ms{D}$, whose symbols are denoted by $\widehat{\ms{D}_0}$ and $\widehat{\ms{D}_1}$, respectively; see \cite{mitrea2013distributions}, also \cite[(3.11)]{li2020general1}, for the definition of symbol. In addition, let $$Q := [0,1] \t \Omega \subset \R^{1 + d}$$ be the time-space domain, and for functions $\Phi(t,x)$ on $Q$, we often write $\Phi_t(\dd) := \Phi(t,\dd)$. Now, given initial and final distributions $\ms{G}_0, \ms{G}_1 \in \M(\Omega, \S^n_+)$, we define the set $\ce([0,1];\ms{G}_0, \ms{G}_1)$ by the measures $\ms{(G,q, R)} \in \M(Q,\xx)$ that satisfy the following  matrix-valued \emph{continuity equation}:
\begin{equation}\label{eq:weak_ctneq}
        \int_{Q} \p_t\Phi \dd \rd \ms{G} + \ms{D}^* \Phi  \dd  \rd \ms{q}  +  \Phi   \dd  \rd \ms{R} = \int_{\Omega}  \Phi_1  \dd \rd \ms{G}_1 -  \int_{\Omega}  \Phi_0 \dd \rd \ms{G}_0\,,\q \forall \Phi \in C^1(Q,\S^n)\,.
\end{equation} 
If \eqref{eq:weak_ctneq} holds, we say that $\ms{G} \in \M(Q, \S^n)$ connects $\ms{G}_0$ and $\ms{G}_1$.

\mb{It is worth noting that} the distributional equation of \eqref{eq:weak_ctneq} is $\p_t \ms{G} + \ms{D}\ms{q} = \ms{R}^{{\rm sym}}$ with a homogeneous \mb{boundary condition on $\p \Omega$} for the measure $\ms{q}$, which extends the classical no-flux condition $\nu \dd \ms{q} = 0$. Indeed, if the measure $\ms{q}$ has a smooth density $q$ with respect to the Lebesgue measure, this homogeneous boundary condition can be formulated as  $\widehat{\ms{D}_1}(-i \n)(q) = 0$, \mb{thanks to the integration by parts:
\begin{equation*}
    \int_\Omega \ms{D} q \dd \Phi + q \dd \ms{D}^* \Phi \, \rd x = \int_{\p \Omega} q \dd \widehat{\ms{D}_1^*}(-i \n)(\Phi)  \,  \rd x = \int_{\p \Omega} \widehat{\ms{D}_1}(-i \n)(q) \dd \Phi  \, \rd x\,,  \q \forall \Phi \in C^1(\Omega, \S^n)\,,
\end{equation*}  
where $\nu$ denotes the exterior unit normal to $\p \Omega$. Then, it is easy to see that for $\ms{D} = \ms{D}_1 = \ddiv$, the above boundary condition $\widehat{\ms{D}_1}(-i \n)(q) = 0$ reduces to the familiar one $\nu \dd q = 0$.}

We are ready to define the weighted Wasserstein-Bures distance on $\M(\Omega,\S^n_+)$ \cite[Definition 3.8]{li2020general1}:  
  \begin{equation} \label{eq:distance}
      {\rm WB}^2_{\Lad}(\ms{G}_0,\ms{G}_1) = \inf_{\mu \in \ce([0,1];\ms{G}_0,\ms{G}_1)} \jj_{\Lad,Q}(\mu)\,,  \tag{$\mathcal{P}$}
\end{equation}
for $\ms{G}_0, \ms{G}_1 \in \M(\Omega,\S_+^n)$. It is easy to show that $\ce([0,1];\ms{G}_0,\ms{G}_1)$ is nonempty and $\rm{WB}_{\Lad}(\ms{G}_0,\ms{G}_1)$ is  finite and well-defined. Moreover, there holds \cite[Lemma 3.9]{li2020general1}
\begin{equation}  \label{eq:basic_bound}
{\rm WB}^2_{\Lad}(\ms{G}_0,\ms{G}_1) \le 
{\rm WB}^2_{(0,\Lad_2)}(\ms{G}_0,\ms{G}_1) \le 
2  \big\lVert \Lad_2^{-1} \big\lVert_\ff^2 \int_{\Omega} \big\lVert\sqrt {G_{1,\lad}} - \sqrt{G_{0,\lad}} \big\lVert_\ff^2  \ \rd \lad  \,,
\end{equation}
where $G_{0,\lad}$ and $G_{1,\lad}$ are densities of $\ms{G}_0$ and $\ms{G}_1$ for some reference measure $\lad$. By Fenchel-Rockafellar theorem \cite{bouchitte2020convex}, one can show the existence of minimizer to \eqref{eq:distance} and that ${\rm WB}_{\Lad}(\dd,\dd)$ is weak* lower semicontinuous and satisfies, for $\alpha, \beta > 0$ and $\ms{G}_0,\ms{G}_1,\w{\ms{G}}_0,\w{\ms{G}}_1 \in \M(\Omega,\S^n_+)$,
\begin{align} \label{sublienar}
     {\rm WB}^2_{\Lad}\big(\alpha\ms{G}_0 + \beta \w{\ms{G}}_0, \alpha \ms{G}_1 + \beta \w{\ms{G}}_1\big) \le \alpha {\rm WB}^2_{\Lad}\big(\ms{G}_0, \ms{G}_1\big) + \beta {\rm WB}^2_{\Lad}\big(\w{\ms{G}}_0, \w{\ms{G}}_1\big)\,.
\end{align}
In particular, by \cite[Section 5]{li2020general1}, $(\M(\Omega,\S^n_+), {\rm WB}_{\Lad})$ is a complete geodesic metric space with the constant-speed geodesic connecting measures $\ms{G}_0, \ms{G}_1 \in \M(\Omega,\S^n_+)$ given by the minimizer to \eqref{eq:distance}. 

\begin{remark} \label{rem:connect}
We recall how \eqref{eq:distance} connects with the recently proposed models and refer the readers to \cite[Section 7]{li2020general1} for more details. First, setting parameters $n = d$ and $k = 1$ and matrices $\Lad_i = I$ for $i = 1, 2$, and letting the operator $\ms{D}$ in the continuity equation \eqref{eq:weak_ctneq} be the symmetric gradient: $\na_s(q) = \frac{1}{2}(\na q + (\na q)^{{\rm T}})$ for smooth fields $q \in C_c^\infty(\R^d,\R^d)$, we see that 
\eqref{eq:distance} gives the Kantorovich-Bures distance $d_{KB}$ in 
\cite[Definition 2.1]{brenier2020optimal}: 
\begin{multline} 
{\rm WB}^2_{(I,I)}(\ms{G}_0,\ms{G}_1) = \frac{1}{2}d^2_{KB}(\ms{G}_0,\ms{G}_1) = \inf\Big\{\jj_{\Lad,Q}(\mu)  \,;\ \mu = \ms{(G,q,R)} \in \M(Q,\xx) \notag\\ \text{satisfies} \ 
 \p_t \ms{G} = \{ - \na \ms{q}_t + \ms{R}_t\}^{\sym} \ \text{with}\  \ms{G}_t|_{t = 0} = \ms{G}_0\,,\ \ms{G}_t|_{t = 1} = \ms{G}_1
\Big\}\,,\q \forall \ms{G}_0, \ms{G}_1 \in \M(\Omega,\S^d_+)\,.  \tag{$\mathcal{P}_{\rm WB}$}\label{def:kan_bure}
\end{multline} 
In addition, to recover the Wasserstein-Fisher-Rao metric defined in \cite{chizat2018interpolating,kondratyev2016new,liero2016optimal}, we set $n = 1$, $k = d$, and $\Lad_1 = \sqrt{\alpha} I$, $\Lad_2 = \sqrt{\beta}I$ with $\alpha, \beta > 0$, and let $\ms{D} = \ddiv$, then \eqref{eq:distance} gives the WFR distance: for $\rho_0, \rho_1 \in \M(\Omega,\R_+)$, 
\begin{multline} 
{\rm WFR}^2(\rho_0, \rho_1)  = \inf\Big\{\int_0^1 \int_\Omega \rho^{\dag}\Big(\frac{1}{2\alpha}|q|^2 + \frac{1}{2\beta} r^2\Big)\,\rd x\,\rd t \,;\ 
 \p_t \rho + \ddiv \,q = r \ \text{with}\  \rho_t|_{t = 0} = \rho_0\,,\ \rho_t|_{t = 1} = \rho_1 \Big\}\,.\tag{$\mathcal{P}_{\rm WFR}$} \label{def:wfr_metric}
\end{multline}
With some other operator $\ms{D}$ and parameters, the model \eqref{eq:distance} can also give the matricial interpolation distance in \cite{chen2019interpolation}.  
\end{remark}

The infimum \eqref{eq:distance}
can be equivalently taken over the following set:
\begin{equation*}
    \ce_\infty ([0,1];\ms{G}_0,\ms{G}_1): =  \ce([0,1];\ms{G}_0,\ms{G}_1) \cap \{\mu \in \M(Q,\xx)\,; \jj_{\Lad,Q}(\mu) < +\infty\}\,.
\end{equation*}
By disintegration, a measure $\mu = (\ms{G}, \ms{q},\ms{R})\in  \ce_\infty([0,1];\ms{G}_0,\ms{G}_1)$ can be identified with a curve $\{\mu_t = (\ms{G}_t, \ms{q}_t,\ms{R}_t)\}_{t \in [0,1]}$ in $\M(\Omega, \xx)$  with $\ms{G}_t$ weak* continuous; see \cite[Proposition 3.13]{li2020general1}. Thus, we can write $\jj_{\Lad,Q}(\mu) = \int_0^1\jj_{\Lad,\Omega}(\mu_t)\, \rd t$ for $\mu \in \ce_\infty([0,1];\ms{G}_0,\ms{G}_1)$. We now consider the smooth approximation of $\mu \in \ce_\infty([0,1];\ms{G}_0,\ms{G}_1)$, which will be useful for the numerical analysis of \eqref{eq:distance} in the following sections. For this, we need some assumptions, which are in some sense necessary, considering the generality of our model; see Remark \ref{rem:rea_assum} below.

\begin{enumerate}[label=\textbf{C.\arabic*},ref=C.\arabic*]
\setlength\itemsep{-0.1mm}
    \item \label{1} $\Omega$ is a compact domain that is star-shaped with respect to a set of points with a nonempty interior. 
    \item \label{2} $\ms{D}^*$ admits a decomposition: $\ms{D}^*(\Phi) = (\ms{D}_0^*(\Phi),\ms{D}^*_1(\Phi))$ for $\Phi \in C^\infty(\R^d,\S^n)$, where 
    $\ms{D}_0^*$ and $\ms{D}_1^*$ are the homogeneous components of degree $0$ and $1$ of $\ms{D}^*$ satisfying
    \begin{equation*}
      \ms{D}_0^*: \mc{L}(\S^n, \R^{n \t k_1})\,, \q   \ms{D}_1^*:C^\infty(\R^d, \S^n) \to C^\infty(\R^d, \R^{n \t k_2})\,, \q \text{with}\ k = k_1 + k_2\,.
    \end{equation*} 
    \item \label{3} $\ms{G}_0,\ms{G}_1 \in \M(\Omega,\S_+^n)$ are absolutely continuous with respect to Lebesgue measure with densities $G_0$ and $G_1$. 
\end{enumerate}

\begin{proposition}\label{prop:regular_ep}
Let $\mu = \ms{(G,q,R)} \in \ce_\infty([0,1];\ms{G}_0,\ms{G}_1)$ with $\ms{G}_0, \ms{G}_1 \in \M(\Omega,\S_+^n)$. Suppose the assumptions \eqref{1} and \eqref{2} hold. Then, for small $\ep > 0$,  there exist $\mu^\ep = (\ms{G}^\ep,\ms{q}^\ep,\ms{R}^\ep) \in \M(Q,\xx)$ and $\ms{G}_0^\ep,\, \ms{G}_1^\ep \in \M(\Omega,\S^n_+)$ such that $\mu^\ep \in \ce_\infty([0,1];\ms{G}^\ep_0,\ms{G}^\ep_1)$ with the following properties: 
\begin{enumerate}
\item $\ms{G}_0^\ep$, $\ms{G}_1^\ep$, and $\mu^\ep = (\ms{G}^\ep,\ms{q}^\ep,\ms{R}^\ep)$ have $C^\infty$-smooth densities $G_0^\ep$, $G_1^\ep$ and $(G^\ep,q^\ep, R^\ep)$, respectively, with respect to the Lebesgue measure. Moreover, $G^\ep \succeq cI$ on $Q$ for some $c > 0$ depending on $\ep$.
\item $\mu^\ep$ weak* converges to $\mu$ as $\ep \to 0$, and there holds 
    \begin{align} \label{eq:cost_est}
        \lim_{\ep \to 0} \jj_{\Lad,Q}(\mu^\ep) = \jj_{\Lad,Q}(\mu)\,.
    \end{align}
\item For $i = 0,1$, $\ms{G}_i^\ep$ weak* converges to $\ms{G}_i$. If the assumption \eqref{3} holds, we further have
    \begin{equation} \label{est:conver_ini}
            \lim_{\ep \to 0} {\rm WB}_{\Lad} (\ms{G}_i,\ms{G}^\ep_i) = 0 \,, \q  \lim_{\ep \to 0}\, \norm{\ms{G}_i-\ms{G}^\ep_i}_{\rm TV} = 0\,. 
    \end{equation}
    \end{enumerate}
\end{proposition}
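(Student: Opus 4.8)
The plan is to mollify $\mu$ in a way that is compatible with the differential constraint \eqref{eq:weak_ctneq}, the conic/positivity structure of $J_\Lad$, and the geometry of $\Omega$. I would proceed in three stages. First, I address the boundary: using assumption \eqref{1}, pick a center point $x_0$ of the star-shaped kernel and, for a dilation parameter $\lambda_\ep \downarrow 1$, push the time-space measure forward under the spatial scaling $x \mapsto x_0 + \lambda_\ep(x - x_0)$ so that the (rescaled) support of $\mu$ sits compactly inside the interior of $\Omega$; one must track how $\ms{D}$ and the continuity equation transform under this affine map, which is routine since $\ms{D}^*$ has constant coefficients and annihilates $I$, so each homogeneous component $\ms{D}_0^*,\ms{D}_1^*$ picks up only an explicit power of $\lambda_\ep$ (this is exactly where \eqref{2} is used, to separate the scalings of the degree-$0$ and degree-$1$ parts). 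Then convolve in space and time with a smooth mollifier $\eta_\ep$ supported in a ball of radius $\ll \lambda_\ep - 1$, so the smoothed objects are still supported in $Q$ and still solve \eqref{eq:weak_ctneq} for the mollified endpoints $\ms{G}_0^\ep := \eta_\ep * (\text{rescaled }\ms{G}_0)$, $\ms{G}_1^\ep$; convolution commutes with the constant-coefficient operators $\p_t$ and $\ms{D}$. The $\S^n_+$-valuedness of $\ms{G}^\ep$ is preserved because convolution with a nonnegative kernel preserves positive semidefiniteness, and the strict lower bound $G^\ep \succeq cI$ is then obtained by a further additive perturbation $\mu^\ep \rightsquigarrow \mu^\ep + \delta_\ep(\,\ms{dt}\otimes\ms{dx}\otimes(I,0,0))$ — note adding a constant-in-time $(I,0,0)$ keeps the continuity equation intact since $\p_t I = 0$ and $\ms{D}^*(I)=0$ — after also adding $\delta_\ep I$ to both endpoints; choosing $\delta_\ep,\lambda_\ep-1,\ep\to 0$ jointly in the right order gives part (1).

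For part (2), weak* convergence $\mu^\ep \rightharpoonup^* \mu$ is standard (mollification plus the vanishing dilation and the vanishing $\delta_\ep$-perturbation). The content is the cost convergence \eqref{eq:cost_est}. Since $J_\Lad$ is convex, $1$-homogeneous and lower semicontinuous, Jensen's inequality applied to the convolution (viewing $\eta_\ep\,\rd x\,\rd t$ as a probability measure and using $1$-homogeneity to pass the reference measure through, as in the definition \eqref{def:costmeasure}) gives $\jj_{\Lad,Q}(\eta_\ep * \nu) \le \jj_{\Lad,Q}(\nu)$ for any admissible $\nu$; combined with the affine rescaling, which changes the cost by an explicit factor $\to 1$, and monotonicity under the $\delta_\ep$-perturbation controlled by \eqref{eq:basic_bound}-type estimates, this yields $\limsup_\ep \jj_{\Lad,Q}(\mu^\ep) \le \jj_{\Lad,Q}(\mu)$. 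The matching $\liminf$ inequality is just weak* lower semicontinuity of $\mu \mapsto \jj_{\Lad,Q}(\mu)$, which follows from the fact that $J_\Lad$ is a conjugate function (so $\jj_{\Lad,Q}(\mu) = \sup$ over test functions valued in $\oa$ of linear functionals of $\mu$).

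For part (3), the weak* convergence $\ms{G}_i^\ep \rightharpoonup^* \ms{G}_i$ is immediate. Under assumption \eqref{3}, $\ms{G}_i$ has an $L^1$ density $G_i$, so the rescaled-and-mollified $G_i^\ep$ converge to $G_i$ \emph{strongly} in $L^1(\Omega,\S^n)$; the total-variation convergence $\norm{\ms{G}_i - \ms{G}_i^\ep}_{\rm TV}\to 0$ is then the statement that $\norm{G_i^\ep - G_i}_{L^1}\to 0$ (plus the vanishing $\delta_\ep I$ term), which is the classical continuity of translations/mollification in $L^1$ together with $L^1$-continuity of the dilation. Finally, to get ${\rm WB}_\Lad(\ms{G}_i,\ms{G}_i^\ep)\to 0$, I would feed this $L^1$ convergence into the bound \eqref{eq:basic_bound}: choosing $\lambda = |\ms{G}_i| + |\ms{G}_i^\ep|$ (or Lebesgue measure) as the common reference, the right-hand side is controlled by $\int_\Omega \|\sqrt{G_i^\ep} - \sqrt{G_i}\|_\ff^2\,\rd x$, and by the Powers–Størmer inequality \eqref{est:lip_squre} this is $\le \sqrt{n}\,\|G_i^\ep - G_i\|_{L^1} \to 0$.

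The main obstacle is the first stage — engineering the dilation so that mollification does not push mass outside $\Omega$ while simultaneously keeping the continuity equation \eqref{eq:weak_ctneq} exactly satisfied. Getting the bookkeeping right for how $\ms{D}$, and in particular its inhomogeneous splitting from \eqref{2}, transforms under the affine scaling (so that the rescaled $(\ms{q}^\ep,\ms{R}^\ep)$ genuinely solve the equation for the rescaled endpoints, with the cost factor tending to $1$), and then choosing the three small parameters $\lambda_\ep-1$, the mollification radius, and $\delta_\ep$ in a mutually consistent order, is the delicate part; everything after that (Jensen, lower semicontinuity, Powers–Størmer) is soft.
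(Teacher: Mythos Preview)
Your proposal is essentially correct and follows the same three-ingredient strategy as the paper's proof (scaling to create room at the boundary, space--time mollification, and an additive lift to force $G^\ep\succeq cI$), with only cosmetic differences in the order of operations: the paper lifts first with a compactly supported $\bar G$, then mollifies on $\R^{1+d}$, then \emph{compresses} back into $Q$, whereas you contract first and mollify second and lift last. Your treatment of parts~(2) and~(3) (Jensen for the $\limsup$, weak* lower semicontinuity for the $\liminf$, and Powers--St{\o}rmer plus \eqref{eq:basic_bound} for the endpoints) matches the paper exactly.

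One genuine omission: you only rescale in space, but then mollify ``in space and time''. After time-mollification the support spills into $[-\ep,1+\ep]\times\Omega$, so as written your $\mu^\ep$ is not supported in $Q$ and the regularized endpoints at $t=0,1$ are not what you claim. The paper handles this by first extending $\mu_t$ by the constant values $(\ms{G}_0,0,0)$ and $(\ms{G}_1,0,0)$ for $t\le 0$ and $t\ge 1$, mollifying, and then applying an affine time-rescaling $\ms{s}^\ep:[-\ep,1+\ep]\to[0,1]$ alongside the spatial one; you need an analogous step. (Also a notational slip: for the support to sit compactly inside $\Omega$ you need a \emph{contraction}, so $\lambda_\ep\uparrow 1$ with $\lambda_\ep<1$, not $\lambda_\ep\downarrow 1$.)
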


\noindent The proof is technical and given in Appendix \ref{appa} for the sake of readability. 
In what follows, we will refer to the measure $\mu^\ep$ constructed above as the \emph{$\ep$-regularization} of $\mu$. 

\begin{remark} \label{rem:rea_assum}
We briefly discuss the necessity of the assumptions \eqref{1}--\eqref{3}. We first observe that 
\eqref{1} is for defining the scaling function $T^\ep$
\eqref{aux_tsscaling}
that compresses the supports of measures smoothed by the mollifier, so that the regularized measures are still supported on $\Omega$. Thus, \eqref{2} is needed for controlling the effect of such scaling; see Lemma \ref{lemma:timescaling}. Clearly, if $\Omega$ is the whole space $\R^d$ or the torus $\T^d = \R^d/\Z^d$, both assumptions could be removed.

For the necessity of \eqref{3}, we show in the case $\Lad = (0,\Lad_2)$ that \eqref{est:conver_ini} could fail  if \eqref{3} does not hold. Without loss of generality, let us consider $\Omega = \R^d$ and estimate ${\rm WB}_{0,\Lad_2}(\theta_d^\ep*\ms{G}_0,\ms{G}_0)$ with $\ms{G}_0 = \d_0 I$ (here $\d_0$ is the Dirac measure and $\theta_d^\ep$ is  the mollifier;
see Appendix \ref{appa}). Note that we can identify ${\rm WB}_{(0,\Lad_2)}$ with the matricial Hellinger distance \cite[Remark 3.11]{li2020general1}, and it was shown that the Hellinger distance is topologically equivalent to the total variation distance \cite[Theorem 3]{monsaingeon2020schr}. It follows that 
\begin{align*}
   {\rm WB}_{0,\Lad_2}(\theta_d^\ep*\ms{G}_0,\ms{G}_0) &\gtrsim  \norm{\theta_d^\ep*\ms{G}_0 - \ms{G}_0}_{\rm TV} 
   \\
     & 
    \gtrsim \sup \big\{ \l \theta_d^\ep(x) \rd x  - \d_0 , \phi \r_{\R^d}\,; \ \phi \in C_c(\R^d, \R) \ \text{with}\ \norm{\phi}_\infty \le 1 \big\} = 1\,, 
\end{align*}
where, in the last step, we consider a sequence of smooth functions, with $k \to + \infty$, 
\begin{equation*}
    \phi_k = \begin{cases}
     e^{\frac{1}{|kx|^2 - 1}+1}\,,   & |x| \le k^{-1}\,,\\
        0, &  |x| \ge k^{-1}\,.
    \end{cases}
\end{equation*}
\mb{However, it is possible to remove \eqref{3} in some cases (e.g., for WFR distance; see Theorem \ref{lem:reg_points})}. 
\end{remark}

The remaining of this work is devoted to the convergence analysis of discrete OT models associated with \eqref{eq:distance}. We end this section with some preparations. For $(\ms{G},u,W) \in \M(\mathcal{X},\S_+^n) \t C(\mathcal{X}, \R^{n \t k} \t \MM^n)$, we define
\begin{equation} \label{def:jalpha_conj}
    \jj_{\Lad,\mathcal{X}}^*(\ms{G},u,W) := \frac{1}{2} \norm{(u \Lad_1 ,W \Lad_2)}^2_{L^2_{\ms{G}}(\mathcal{X})}\,.
\end{equation} 
It is easy to see that the conjugate function of $\jj^*_{\Lad,\mathcal{X}}(\ms{G},u,W)$ with respect to $(u,W)$ gives $\jj_{\Lad,\mathcal{X}}(\ms{G},\ms{q},\ms{R})$:
\begin{equation} \label{eq:fenconj_cost}
    \jj_{\Lad,\mathcal{X}}(\ms{G},\ms{q},\ms{R}) = \sup_{(u,W) \in L^\infty_{|\ms{(G,q,R)}|}(\mathcal{X}, \R^{n \t k} \t \MM^n) } \l (\ms{q},\ms{R}), (u,W) \r_{\mathcal{X}} - \jj_{\Lad,\mathcal{X}}^*(\ms{G},u,W)\,.
\end{equation}
For simplicity, 
 we focus on the weighted matrices $(\Lad_1,\Lad_2) = (I, I)$ and omit the corresponding subscripts. For instance, we will use 
 $\jj_\Omega$ instead of $\jj_{(I,I),\Omega}$.

\section{Abstract convergence framework}  \label{sec:absconver}

In this section, we propose a convergence framework for an 
 abstract discretization of  \eqref{eq:distance} from the perspective of  Lax equivalence theorem,
 which extends the recent work \cite{lavenant2019unconditional} for the classical $2$-Wasserstein distance. 

As a motivation, we briefly review the Lax equivalence theorem following  \cite{teman2012numerical}. Let $X$ and $Y$ be two Banach spaces and $A \in \mc{L}(X,Y)$ be a linear operator with inverse $A^{-1} \in \mc{L}(X,Y)$, so that the linear equation $A x = y$ 
is well-defined for any given $y \in Y$. \mb{We shall introduce a family of discrete problems to approximate $x = A^{-1}y$.} Let $h > 0$ be the discretization parameter. Suppose that $X_h$ and $Y_h$ are finite-dimensional vector spaces approximating $X$ and $Y$, respectively, and $A_h \in \mc{L}(X_h,Y_h)$ is an invertible linear operator. We define interpolation operators $I^x_h \in \mc{L}(X, X_h)$ and $I^y_h \in \mc{L}(Y, Y_h)$, and an embedding operator $P_h \in \mc{L}(X_h, X)$. Then, the finite-dimensional discrete problem can be formulated as $A_h x_h = I^y_h y$ for $y \in Y$. 

The Lax equivalence theorem means a set of sufficient conditions on operators $(A_h, I_h^x, I_h^y, P_h)$ that guarantees the convergence of $P_h x_h$ to $x$. More precisely, under the following conditions:
\vspace{-1.3mm}
\begin{itemize}
\setlength\itemsep{-0.1mm}
    \item consistency: $\norm{ I^y_h A x - A_h I_h^x x}_{Y_h} \to 0$ as $h \to 0$, \mb{measuring} the commutativity between $I^y_h A$ and $A_h I_h^x$,
    \item stability: $\norm{A_h^{-1}} \le C$ and $\norm{P_h} \le C$ for some $C > 0$ independent of $h$,
    \item approximability: $\norm{x - P_h I_h^x x}_X \to 0$ as $h \to 0$, \mb{measuring} how the discrete space $X_h$ approximates $X$,
\end{itemize}
there holds $\norm{x - P_h x_h}_{X} \to 0$ as $h \to 0$. If one is interested in the convergence in terms of $\norm{I_h^x x - x_h}_{X_h}$, then the assumptions related to $P_h$ (i.e., $\norm{P_h} \le C$ and $\norm{x - P_h I_h^x x}_X \to 0$) can be removed. 

It is worth emphasizing that to define a discrete problem, we only need $(A_h, I_h^y)$, while $(I_h^x, P_h)$ are introduced for quantifying the error. Moreover, the Lax equivalence theorem can be interpreted as follows: given the discrete model $(A_h, I_h^y)$, if there exists $(I_h^x, P_h)$ such that the above assumptions hold, then the convergence of $x_h$ follows. In general, the choice of $(I_h^x, P_h)$ is not unique and is actually a key part of the error analysis. 

We shall adopt this perspective and extend the abstract numerical framework in \cite{lavenant2019unconditional} to our case; \mb{see Table \ref{table:analog} for a comparison of related concepts. In Section \ref{subsec:approx_measure}, we introduce some notions for approximating a general Radon measure. Then, Sections \ref{subsec:abstract_conver} and \ref{subsec:convergence} are devoted to an abstract discretization of our transport distance \eqref{eq:distance} and its convergence.} 
The main result of this section is Theorem \ref{thm:main_conver}, where the weak convergence of an abstract discrete matrix-valued OT model is proved.

\begin{table}[!htbp] 
     \centering 
\mb{   \begin{tabular}{|c|c|c|}
\hline 
 & Lax equivalence for $Ax = y$ & Convergence for discrete OT models \tabularnewline 
\hline 
Discrete problems  & $A_h x_h = I^y_h y$ with stable $A_h^{-1}$  & Definition \ref{def:disc_transp} with stability \eqref{eq:bdd_adjoint} \tabularnewline
\hline 
Interpolation & Consistent $I_h^x$ and $I_h^y$  & Consistent $\ss_{\ol{{\rm X}}_\si}$ and $\ss_{{\rm Y}_\si}$ (Definition \ref{def:cons_sampl})
\tabularnewline 
\hline 
Embedding & Stable $P_h$ with approximability & Stable and consistent (Definition \ref{def:discre_appro} and \ref{def:cons_recons})
\tabularnewline 
\hline 
\end{tabular}}
\caption{\mb{Analogy between Lax equivalence theorem and our convergence analysis}} \label{table:analog} 
\end{table}

\subsection{Approximation of Radon measures} \label{subsec:approx_measure}

\mb{To facilitate the discussion on the discretization of the problem \eqref{eq:distance} and its convergence, we first briefly discuss approximating a Radon measure.} Let $\mathcal{X}$ be a compact set in Euclidean space and $h$ be a discretization parameter in a subset $\hh$ of $\R_+^k$ with $0$ being its accumulation point. \mb{For our problem, $\mc{X}$ is either the spatial domain $\Omega$ or the time-space one $Q =[0,1] \t \Omega$, and  $h$ is for the spatial mesh size $\si$ or the time-space one $(\tau,\si)$; see Section \ref{subsec:abstract_conver}.} 


\begin{definition}[Approximation of measure] \label{def:discre_appro}
    We say that $\left\{({\rm M}_h, \rr_{{\rm M}_h})\right\}_{h \in \hh}$ is \emph{a family of discrete approximations} of the measure space $\M(\mathcal{X},\R^n)$ if for any $h \in \hh$, 
    \begin{enumerate}
        \item ${\rm M}_h$ is a finite-dimensional vector space equipped with a norm $\norm{\dd}_{1,\ms{M}_h}$ and an inner product $\l \dd,\dd\r_{\ms{M}_h}$.  The dual norm on ${\rm M}_h$ is defined by 
\begin{equation} \label{def:dualnorm_dis}
    \norm{f^h}_{*,\ms{M}_h} := \max\big\{\left\l f^h, g^h \right\r_{\ms{M}_h}; \ \norm{g^h}_{1,\ms{M}_h} \le 1 \big\}\,,\q  f^h \in {\rm M}_h\,.
\end{equation}    
        \item  $\rr_{{\rm M}_h}: ({\rm M}_h,\norm{\dd}_{1,\ms{M}_h}) \to \M(\mathcal{X},\R^n)$, called the reconstruction operator, is a linear operator with the operator norm
        uniformly bounded with respect to $h$: for some $C > 0$ independent of $h$,
        \begin{align} \label{def:bound_of_r}
            \norm{\rr_{{\rm M}_h}f^h}_{\rm TV} \le C \norm{f^h}_{1,\ms{M}_h}\,, \q  \forall f^h \in {\rm M}_h\,.
        \end{align}
    \end{enumerate}
\end{definition}

\begin{remark} \label{rem:app_ran}
The norm $\norm{\dd}_{\ms{M}_h}$ induced by the inner product $\l \dd,\dd\r_{\ms{M}_h}$ is generally different from \emph{discrete $L^1$ and $L^\infty$-norms} $\norm{\dd}_{1,\ms{M}_h}$ and $\norm{\dd}_{*,\ms{M}_h}$; see \eqref{defnorminner} and \eqref{def:dualnorm_X} below for an example. \mb{We also remark that in our analysis, the reconstruction $\mc{R}_{\ms{M}_h}$ with the stability  \eqref{def:bound_of_r} plays the role of the embedding $P_h$ in the Lax equivalence theorem.} 
\end{remark}

Recall that our main aim is the weak convergence of discrete solutions. A natural question is when two families of reconstructions $\rr_{{\rm M}_h}f^h$ and $\w{\rr}_{{\rm M}_h}f^h$ would give the same limit in the weak* topology as $h \to 0$. \mb{To answer it,} we introduce the following concepts. We denote by $\rr^*_{{\rm M}_h}: C(\mathcal{X},\R^n) \to ({\rm M}_h,\norm{\dd}_{*,\ms{M}_h})$ the adjoint of $\rr_{{\rm M}_h}$: 
\begin{align} \label{def:adj_recons}
    \big\l f^h, \rr^*_{{\rm M}_h}\vp \big\r_{\ms{M}_h}  = \big\l \rr_{{\rm M}_h}f^h, \vp \big\r_{\mathcal{X}}\,,\q   \forall f^h \in {\rm M}_h\,,\  \vp \in C(\mathcal{X},\R^n) \,.
 \end{align}

\begin{definition} \label{def:space_reconst}
Two families of reconstruction operators $\{\rr_{{\rm M}_h}\}_{h \in \hh}$ and $\{\w{\rr}_{{\rm M}_h}\}_{h \in \hh}$ are \emph{equivalent}, denoted by $\rr_{{\rm M}_h} \sim \w{\rr}_{{\rm M}_h}$, 
if there exists $\{\ep_h\}_{h \in \hh}$,  with $\ep_h > 0$ and $\ep_h \to 0$ as $h \to 0$ such that
 \begin{align} \label{def:equirecons}
    &  \norm{(\rr^*_{{\rm M}_h} - \w{\rr}^*_{{\rm M}_h})\vp}_{*,\ms{M}_h} \le \ep_{h} \norm{\vp}_{1,\infty}\,,
     \q \forall \vp \in C^1(\mathcal{X},\R^n)\,.
  \end{align}
A family of reconstructions $\{\h{\rr}_{{\rm M}_h}\}_{h \in \hh}$ is \emph{quasi-isometric} if for some  constant $C > 0$ independent of $h$,
  \begin{align} \label{def:quasirecons}
      C^{-1} \norm{f^h}_{1,\ms{M}_h} \le \norm{\h{\rr}_{{\rm M}_h}f^h}_{\rm TV} \le C \norm{f^h}_{1,\ms{M}_h}\,, \q \forall f^h \in {\rm M}_h\,.
  \end{align}
\end{definition}

One can check that the condition \eqref{def:equirecons} indeed defines an equivalence relation (i.e., a binary relation with reflexivity, symmetry, and transitivity) on the 
set of reconstructions. We remark that this equivalence relation does not preserve the quasi-isometric property. That is, a reconstruction 
$\rr_{{\rm M}_h}$ equivalent to a quasi-isometric reconstruction may not be quasi-isometric. With the above notions, we have the following useful lemma.
 
\begin{lemma}\label{lemma:unifrombdd}
Let $\{({\rm M}_h, \rr_{{\rm M}_h})\}_{h \in \hh}$ be a family of discrete approximations of $\M(\mathcal{X},\R^n)$ and $\{f^h \}_{h \in \hh}$ be a bounded sequence in $({\rm M}_h, \norm{\dd}_{1,\ms{M}_h})$. Then, there exists a sequence $\{h_n\}_{n \in \NN} \subset \hh$ with $h_n \to 0$ and a measure $\mu \in \M(\mathcal{X},\R^n)$ such that for any equivalent reconstruction $\w{\rr}_{{\rm M}_{h_n}}\sim\rr_{{\rm M}_{h_n}}$, $\w{\rr}_{{\rm M}_{h_n}}f^{h_n}$ weak* converges to $\mu$ as $n \to \infty$. 
\end{lemma}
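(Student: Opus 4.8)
The plan is to combine a uniform total variation bound with the Banach--Alaoglu theorem, and then to transfer the resulting weak* limit across an arbitrary equivalent reconstruction by exploiting the estimate \eqref{def:equirecons} and a density argument. First I would fix the reference reconstruction $\rr_{{\rm M}_h}$ from the given family of discrete approximations. By the stability bound \eqref{def:bound_of_r}, the reconstructed measures $\mu^h := \rr_{{\rm M}_h} f^h$ satisfy $\norm{\mu^h}_{\rm TV} \le C \norm{f^h}_{1,\ms{M}_h} \le C M$, where $M := \sup_{h \in \hh} \norm{f^h}_{1,\ms{M}_h} < +\infty$ by hypothesis. Since $\M(\mathcal{X},\R^n) = C(\mathcal{X},\R^n)^*$ and $C(\mathcal{X},\R^n)$ is separable (as $\mathcal{X}$ is a compact metric space), closed balls in $\M(\mathcal{X},\R^n)$ are weak* sequentially compact. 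Picking any sequence $h_n \to 0$ in $\hh$ (which exists since $0$ is an accumulation point of $\hh$), I extract a subsequence, still denoted $h_n$, along which $\mu^{h_n} = \rr_{{\rm M}_{h_n}} f^{h_n}$ weak* converges to some $\mu \in \M(\mathcal{X},\R^n)$.

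Next I would upgrade this to convergence of $\w{\rr}_{{\rm M}_{h_n}} f^{h_n}$ for every equivalent reconstruction $\w{\rr}_{{\rm M}_{h_n}} \sim \rr_{{\rm M}_{h_n}}$. The key point is to estimate the difference $\l (\w{\rr}_{{\rm M}_{h_n}} - \rr_{{\rm M}_{h_n}}) f^{h_n}, \vp \r_{\mathcal{X}}$ for test functions $\vp$. By the adjoint relation \eqref{def:adj_recons}, this difference equals $\l f^{h_n}, (\w{\rr}^*_{{\rm M}_{h_n}} - \rr^*_{{\rm M}_{h_n}}) \vp \r_{\ms{M}_{h_n}}$, which by the definition \eqref{def:dualnorm_dis} of the dual norm is bounded by $\norm{f^{h_n}}_{1,\ms{M}_{h_n}} \norm{(\w{\rr}^*_{{\rm M}_{h_n}} - \rr^*_{{\rm M}_{h_n}}) \vp}_{*,\ms{M}_{h_n}}$. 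For $\vp \in C^1(\mathcal{X},\R^n)$, the equivalence relation \eqref{def:equirecons} gives $\norm{(\w{\rr}^*_{{\rm M}_{h_n}} - \rr^*_{{\rm M}_{h_n}}) \vp}_{*,\ms{M}_{h_n}} \le \ep_{h_n} \norm{\vp}_{1,\infty}$, hence the whole difference is at most $M \ep_{h_n} \norm{\vp}_{1,\infty} \to 0$ as $n \to \infty$. Combined with $\l \rr_{{\rm M}_{h_n}} f^{h_n}, \vp \r_{\mathcal{X}} \to \l \mu, \vp \r_{\mathcal{X}}$, this shows $\l \w{\rr}_{{\rm M}_{h_n}} f^{h_n}, \vp \r_{\mathcal{X}} \to \l \mu, \vp \r_{\mathcal{X}}$ for all $\vp \in C^1(\mathcal{X},\R^n)$.

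Finally I would remove the $C^1$ restriction on the test function by a standard density argument: $C^1(\mathcal{X},\R^n)$ is dense in $C(\mathcal{X},\R^n)$ in the supremum norm, and the sequence $\{\w{\rr}_{{\rm M}_{h_n}} f^{h_n}\}$ is uniformly bounded in total variation norm (by \eqref{def:bound_of_r} applied to $\w{\rr}$, or more precisely since \eqref{def:bound_of_r} is part of Definition~\ref{def:discre_appro} which any reconstruction in an equivalence class need not satisfy — so I would instead bound $\norm{\w{\rr}_{{\rm M}_{h_n}} f^{h_n}}_{\rm TV}$ directly via $\norm{\w{\rr}_{{\rm M}_{h_n}} f^{h_n}}_{\rm TV} \le \norm{\rr_{{\rm M}_{h_n}} f^{h_n}}_{\rm TV} + \norm{(\w{\rr}_{{\rm M}_{h_n}} - \rr_{{\rm M}_{h_n}}) f^{h_n}}_{\rm TV}$, controlling the second term again by the dual-norm bound above, now taking the supremum over $\norm{\vp}_\infty \le 1$ and noting that such $\vp$ can be approximated by $C^1$ functions). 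Given an arbitrary $\psi \in C(\mathcal{X},\R^n)$ and $\delta > 0$, choose $\vp \in C^1$ with $\norm{\psi - \vp}_\infty < \delta$; then $|\l \w{\rr}_{{\rm M}_{h_n}} f^{h_n}, \psi \r_{\mathcal{X}} - \l \mu, \psi \r_{\mathcal{X}}|$ splits into the $C^1$ term (which vanishes in the limit) plus two error terms each bounded by $(\sup_n \norm{\w{\rr}_{{\rm M}_{h_n}} f^{h_n}}_{\rm TV} + \norm{\mu}_{\rm TV})\delta$, so the lim sup is $O(\delta)$ and letting $\delta \to 0$ finishes the proof.

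The main obstacle I anticipate is the last step's subtlety: the equivalence estimate \eqref{def:equirecons} is only assumed for $C^1$ test functions, so the uniform total variation bound on $\w{\rr}_{{\rm M}_{h_n}} f^{h_n}$ must be established carefully rather than simply quoted from \eqref{def:bound_of_r} (which is a property of the specific family in the given discrete approximation, not of every member of an equivalence class). This is handled by combining the $C^1$ version of the dual-norm estimate with the density of $C^1$ in $C$ to recover a uniform bound, which is routine but needs to be spelled out; everything else is a direct application of Banach--Alaoglu and bookkeeping with adjoints and dual norms.
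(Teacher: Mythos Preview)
Your approach is essentially the same as the paper's: extract a weak* convergent subsequence from $\rr_{{\rm M}_{h_n}}f^{h_n}$ via the stability bound and Banach--Alaoglu, transfer the limit to $\w{\rr}_{{\rm M}_{h_n}}f^{h_n}$ on $C^1$ test functions using \eqref{def:equirecons}, and close by density.

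One clarification on your last step: your worry that \eqref{def:bound_of_r} may fail for $\w{\rr}$ is unfounded in the paper's setup. In Definition~\ref{def:space_reconst} the equivalence is between two families of \emph{reconstruction operators}, and that term refers back to Definition~\ref{def:discre_appro}, so $\w{\rr}_{{\rm M}_h}$ itself satisfies \eqref{def:bound_of_r}. The paper accordingly just writes ``the boundedness of $\w{\rr}_{{\rm M}_h}f^h$'' without further comment. Your proposed alternative route---bounding $\norm{(\w{\rr}_{{\rm M}_{h_n}} - \rr_{{\rm M}_{h_n}}) f^{h_n}}_{\rm TV}$ by approximating continuous $\vp$ with $C^1$ functions---would not actually work, because \eqref{def:equirecons} controls the pairing by $\ep_{h_n}\norm{\vp}_{1,\infty}$, and the $C^1$ norm of an approximating sequence can blow up even when $\norm{\vp}_\infty \le 1$. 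Fortunately this detour is unnecessary.
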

\begin{proof}
First, note that $\{\rr_{{\rm M}_h}f^h\}_{h \in \hh}$ is bounded in $\M(\mathcal{X},\R^n)$ by \eqref{def:bound_of_r}. There exists a subsequence of $\rr_{{\rm M}_h}f^h$, still indexed by $h$, weak* converging to some $\mu \in \M(\mathcal{X},\R^n)$.  We next show that any sequence $\w{\rr}_{{\rm M}_h}f^h$ with $\w{\rr}_{{\rm M}_h} \sim \rr_{{\rm M}_h}$ weak* converges to the same limit. For this, 
by \eqref{def:equirecons}, we have
\begin{align*}
    \big|\big\l \big(\w{\rr}_{{\rm M}_{h}} - \rr_{{\rm M}_{h}}\big)f^{h},  \vp \big\r_{\mathcal{X}} \big| \le \ep_{h} \norm{\vp}_{1,\infty} \norm{f^{h}}_{1,\ms{M}_{h}}\,,  \q \forall  \vp \in C^1(\mathcal{X},\R^n)\,, 
\end{align*}
which gives
$\l \w{\rr}_{{\rm M}_{h}}f^{h}, \vp \r_{\mathcal{X}} \to \l \mu, \vp\r_{\mc{X}}$ as $h \to 0$, by the boundedness of $\norm{f^{h}}_{1,\ms{M}_h}$ and the weak* convergence of $\rr_{{\rm M}_h}f^h$. The proof is completed by the density of $C^1(\mathcal{X},\R^n)$ in $C(\mathcal{X},\R^n)$ and the boundedness of $\w{\rr}_{{\rm M}_h}f^h$.  
\end{proof}

\subsection{Discrete matrix-valued OT problem} \label{subsec:abstract_conver}

We next discretize the matrix-valued transport problem \eqref{eq:distance}, primarily focusing on the spatial discretization, since, as we shall see, the time
discretization is explicit and straightforward. 
For this, we first introduce the following basic modules adapted from \cite[Definition 2.5]{lavenant2019unconditional}. Let $\si \in \Sigma \subset (0,+\infty)$ be the spatial discretization parameter.


\begin{definition}[Spatial discrete modules] \label{def:build_block}
    We define the following abstract objects for the problem \eqref{eq:distance}.
\begin{enumerate}[\textbullet]
    \item  $\big\{\big(\overline{{\rm X}}_\si, \h{\rr}_{\ol{{\rm X}}_\si}\big)\big\}_{\si \in \Sigma}$ and $\big\{\big({\rm Y}_\si, \h{\rr}_{{\rm Y}_\si}\big)\big\}_{\si \in \Sigma}$ are discrete approximations of $\M(\Omega, \MM^n)$ and $\M(\Omega, \R^{n \t k})$, respectively, where $\h{\rr}_{\ol{{\rm X}}_\si}$ and $\h{\rr}_{{\rm Y}_\si}$ are quasi-isometric.  Moreover, let ${\rm X}_\si$ be a subspace of $\ol{{\rm X}}_\si$ and contain a linear cone ${\rm X}_{\si,+}$, and ${\rm X}_\si^\perp$ denote the orthogonal complement of ${\rm X}_\si$ in $\big(\ol{{\rm X}}_\si, \l \dd, \dd \r_{\ol{\ms{X}}_\si}\big)$. For clarity, we denote by $\norm{\dd}_{1,\ms{X}_\si}$ and $\l\dd,\dd \r_{\ms{X}_\si}$
    the norm and inner product on ${\rm X}_\si$ induced from those of $\ol{{\rm X}}_\si$, respectively.  
  
\item \mb{The set of admissible spatial reconstructions on $\ol{{\rm X}}_\si$, denoted by $\big[\h{\rr}_{\ol{{\rm X}}_\si}\big]$, consists of those $\rr_{\ol{{\rm X}}_\si}$ that is equivalent to $\h{\rr}_{\ol{{\rm X}}_\si}$ and maps ${\rm X}_\si$, ${\rm X}_\si^\perp$, and ${\rm X}_{\si,+}$ to $\M(\Omega,\S^n)$, $\M(\Omega, \AA^n)$, and $\M(\Omega,\S^n_+)$, respectively. We assume that $\h{\rr}_{\ol{{\rm X}}_\si}$ itself satisfies the aforementioned mapping property.  The set of admissible reconstructions $\big[\h{\rr}_{{\rm Y}_\si}\big]$ on ${\rm Y}_\si$ is defined by all $\rr_{{\rm Y}_\si}$ equivalent to $\h{\rr}_{{\rm Y}_\si}$, i.e., 
\begin{equation} \label{adeq2}
\Big[\h{\rr}_{{\rm Y}_\si}\Big]: = \Big\{\rr_{{\rm Y}_\si}\,;\ \rr_{{\rm Y}_\si}\sim \h{\rr}_{{\rm Y}_\si} \Big\}\,,   
\end{equation}
while the set of admissible reconstructions on ${\rm X}_\si$ is simply defined as the restriction of $\big[\h{\rr}_{\ol{{\rm X}}_\si}\big]$ on ${\rm X}_\si$:} 
\begin{equation} \label{adeq3}
    \Big[\h{\rr}_{{\rm X}_\si}\Big]: = \Big\{\rr_{{\rm X}_\si}\,;\ \rr_{{\rm X}_\si} : =   \rr_{\ol{{\rm X}}_\si}|_{{\rm X}_\si}, \rr_{\ol{{\rm X}}_\si}\in \big[\h{\rr}_{\ol{{\rm X}}_\si}\big] \Big\}\,.
\end{equation}
        \item 
         $\jj_\sigma: ({\rm X}_{\sigma}, {\rm Y}_\sigma, \ol{{\rm X}}_\sigma) \to [0,+\infty]$ is a proper, lower semicontinuous, convex function with positive homogeneity of degree $-1$ in its first variable and homogeneity of degree $2$ in its second and third variables.  
        \item The discrete derivation $\ms{D}_\si: {\rm Y}_\si \to {\rm X}_\si$ is a linear operator with the adjoint $\ms{D}^*_\si: {\rm X}_\si \to {\rm Y}_\si$ defined by 
        \begin{align} \label{def:adj_disderiv}
           \l y^\si, \sd^*\, x^\si \r_{\ms{Y}_{\si}} := - \l \sd\, y^\si, x^\si \r_{\ms{X}_\si}\,,  \q \forall x^\si \in {\rm X}_\si,\, y^\si \in {\rm Y}_\si\,.
        \end{align}
    \end{enumerate}        
\end{definition}

\begin{remark}
We observe that $\h{\rr}_{{\rm X}_\si} = \h{\rr}_{\ol{{\rm X}}_\si}|_{{\rm X}_\si}$ is quasi-isometric and the elements in the set  $\big[\h{\rr}_{{\rm X}_\si}\big]$ \eqref{adeq3} are equivalent to $\h{\rr}_{{\rm X}_\si}$, in the sense of Definition \ref{def:space_reconst}.
We emphasize that the only assumption for the operator $\sd$ is the linearity.
\end{remark}

We introduce the conjugate of $\jj_\si(G^\si,q^\si,R^\si)$ with respect to $(q^\si,R^\si)$ by 
\begin{equation} \label{def:disjalpha_conj}
   \jj^*_\si (G^\si, u^\si,W^\si) := \sup_{(u^\si,W^\si) \in {\rm Y}_\si \t \ol{{\rm X}}_\si} \l q^\si, u^\si \r_{\ms{Y}_\si} + \l R^\si, W^\si \r_{\ol{\ms{X}}_\si} - \jj_\si(G^\si,q^\si,R^\si)\,.
\end{equation} 
Note that $\jj_\si$ is homogeneous of degree 2 in $(q^\si,R^\si)$, so is $\jj^*_\si$ in $(u^\si,W^\si)$. It follows from \eqref{def:disjalpha_conj} that
\begin{align} \label{eq: ana_Cauchy}
    \l q^\si, u^\si \r_{\ms{Y}_\si} + \l R^\si, W^\si \r_{\ol{\ms{X}}_\si} \le  \gamma^{-2} \jj_\si(G^\si,q^\si,R^\si) + \gamma^2 \jj^*_\si (G^\si, u^\si, W^\si)\,, \q \forall \gamma > 0\,. 
\end{align}
We minimize the right-hand side of \eqref{eq: ana_Cauchy}
in $\gamma$ and have 
\begin{align}\label{eq:disana_Cauchy_2}
    \l q^\si, u^\si \r_{\ms{Y}_\si} + \l R^\si, W^\si \r_{\ol{\ms{X}}_\si} \le 2 \sqrt{\jj_\si(G^\si,q^\si,R^\si)  \jj^*_\si (G^\si, u^\si, W^\si)}\,.
\end{align} 
Let $\P_{{\rm X}_\si}$ be the orthogonal projection from $\big(\ol{{\rm X}}_\si, \l\dd,\dd\r_{\ol{\ms{X}}_\si}\big)$ to its subspace ${\rm X}_\si$. For an admissible reconstruction $\rr_{\ol{{\rm X}}_\si}$ on $\ol{\ms{X}}_\si$, thanks to $\rr_{\ol{\rm X}_\si}: {\rm X}_\si^\perp  \to \M(\Omega, \AA^n)$, we can derive 
\begin{equation} \label{eq:invari_space}
\big\l \rr_{\ol{\rm X}_\si}R^\si,  \Phi \big\r_{\Omega} = \big\l \rr_{\ol{\rm X}_\si} (\P_{{\rm X}_\si} R^\si),  \Phi \big\r_{\Omega} = \big\l  \rr_{{\rm X}_\si}(\P_{{\rm X}_\si} R^\si), \Phi \big\r_{\Omega}\,, \q \forall R^\si \in \ol{\ms{X}}_\si\,,\ \Phi \in C(\Omega,\S_n)\,.
\end{equation}
With the help of the above modules, the semi-discrete model of \eqref{eq:distance} readily follows:
\begin{align*}
    &\min \q \int_0^1 \jj_\sigma(\mu^\si_t)\, \rd t \q \text{over} \q \{\mu^\si_t = (G^\si_t, q^\si_t, R^\si_t)\}_{t \in [0,1]} \subset {\rm X}_{\sigma,+} \t {\rm Y}_\sigma \t \ol{{\rm X}}_\sigma\,, \\
& \text{subject to}\q \p_t G^\si_t + \sd q^\si_t = \P_{{\rm X}_\si} R_t^\si  \q  \text{with}\q  G^\si_t|_{t = 0} = \ms{G}_0^\si\,, \  G^\si_t|_{t = 1} = \ms{G}_1^\si\,,
\end{align*}
where $\ms{G}_i^{\si} \in {\rm X}_{\si,+}$, $i = 0, 1$, is a given approximation of $\ms{G}_i\in \M(\Omega,\S^n_+)$.

We proceed to consider the time discretization. We adopt the staggered grid for the time interval $[0,1]$. Let $N \ge 1$ be the number of time steps and $\tau: = 1/N$ be the temporal step size. We approximate the values of $G_t^\si$ at the time levels $t = i \tau $ for $i = 0,\ldots, N$ and the values of $(q_t^\si,R_t^\si)$ at $t = (i-1/2) \tau$ for $i = 1,\ldots, N$. We denote by 
$$(\ts) \in \Sigma_{\NN}: = \{1/2,\ldots,1/N,\ldots\} \t \Sigma$$ the time-space discretization parameter, and define the product space: for each $(\ts) = (1/N,\si)$ in $\Sigma_{\NN}$,
\begin{equation} \label{eq_product}
\M_{\ts} := {\rm X}_{\ts} \t {\rm Y}_{\ts} \t \ol{{\rm X}}_{\ts} : = ({\rm X}_\sigma)^{N+1} \t ({\rm Y}_\sigma)^N \t (\ol{{\rm X}}_\sigma)^N\,.
\end{equation}
Then, the fully discretized model can be formulated as follows.
 
\begin{definition}[Discrete OT model] \label{def:disc_transp}
Let $({\rm X}_\sigma, {\rm Y}_\sigma, \ol{{\rm X}}_\sigma)$, $\jj_\si$, and $\sd$ be given by Definition \ref{def:build_block}, and $\ms{G}^\si_0, \ms{G}^\si_1 \in  {\rm X}_{\sigma,+}$ be the \emph{discrete initial} and \emph{final distributions} satisfying, \mb{for $i = 0,1$},
\begin{align} \label{eq:convdis_inifal}
    \lim_{\si \to 0}\big\l \h{\rr}_{{\rm X}_\sigma}\ms{G}^\si_i,  \Phi \big\r_{\Omega}  = \l \mathsf{G}_i,  \Phi \r_{\Omega}\,,\q \forall \Phi \in C(\Omega,\S^n)\,.
 \end{align}
We write $\mu^{\ts}$ for $(G^{\ts},q^{\ts},R^{\ts})\in \M_{\ts}$ and define  
the \emph{discrete action functional} $\jj_\ts$ by
    \begin{equation} \label{eq:discost}
        \jj_{\tau,\sigma}(\mu^{\tau,\sigma}) := \tau \sum^N_{k  = 1}  \jj_\sigma \Big(\frac{G^{\ts}_{k-1} + G^{\ts}_k}{2},q^{\ts}_k,R^{\ts}_k \Big)\,.
 \end{equation}
We denote by $\ce_\ts([0,1]; \ms{G}_0^\si, \ms{G}_1^\si)$ the set of $\mu^\ts \in \M_\ts$ satisfying the \emph{discrete continuity equation}: 
\begin{equation} \label{eq:discontieq}
    \tau^{-1}(G^{\ts}_{k} - G^{\ts}_{k-1}) = - \mathsf{D}_\sigma q^{\ts}_k + \P_{{\rm X}_\si} R^{\ts}_k\,, \q  k = 1,2,\ldots, N\,,
\end{equation}
with $G^\ts_{0} = \ms{G}^\si_0$, $G^\ts_N = \ms{G}^\si_1$, and $\{G_i^\ts\}_{i \in \{1,\ldots, N\}} \subset {\rm X}_{\si,+}$. Then, the discrete matrix-valued OT model is defined by
\begin{equation} \tag{$\mathcal{P}_{\ts}$} \label{prob:discre_ot}
    \inf\left\{\jj_{\ts}(\mu^\ts)\,; \ \mu^\ts \in \ce_\ts([0,1]; \ms{G}_0^\si,\ms{G}_1^\si) \right\}\,.
\end{equation}
\end{definition} 

\begin{remark}[Existence of minimizer]\label{rem:existence}
By the direct method in the calculus of variations, we have that \emph{if there exists $\mu^\ts \in \ce_\ts([0,1]; \ms{G}_0^\si, \ms{G}_1^\si)$ such that $\jj_\ts(\mu^\ts) < + \infty$ and $G_i^\ts \in {\rm X}_{\si,+}$, then the discrete OT problem \eqref{prob:discre_ot} admits a minimizer.} Indeed,  it is clear from \eqref{eq:discost} that $\jj_\ts$ is a proper lower semicontinuous  convex functional. By estimates similar to the ones in Theorem \ref{thm:lower_abs} for the boundedness of $\{\mu^\ts\}_{(\ts)}$, we can show the boundedness of a minimizing sequence $\{\mu^\ts_n\}_n$ for \eqref{prob:discre_ot}. Then, thanks to the finite dimensionality of \eqref{prob:discre_ot}, up to a subsequence, $\{\mu^\ts_n\}_n$ converges to some vector $\mu^{\tau,\si}$ satisfying the discrete continuity equation \eqref{eq:discontieq}. The proof is completed by the lower semicontinuity of $\jj_\ts$.  
\end{remark}

Before investigating the convergence of \eqref{prob:discre_ot} to \eqref{eq:distance}, we introduce the time-space reconstruction operators based on the spatial ones in Definition \ref{def:build_block}.

\begin{definition}[Time-space reconstruction]
Let $\rr_{{\rm X}_\si}$, $\rr_{\ol{\rm X}_\si}$, and $\rr_{{\rm Y}_\si}$ be  admissible spatial reconstructions \mb{given in Definition \ref{def:build_block}} and $\mc{M}_\ts$ be the product space \eqref{eq_product}.
We define the piecewise linear (in time) reconstruction $\rr^{l}_{{\rm X}_\ts}: {\rm X}_\ts \to \mc{M}(Q, \S^n)$ by the weak form: for $G^\ts \in {\rm X}_\ts$ and any $\Phi \in C(Q,\S^n)$,
\begin{align}\label{eq:tsrxl}  
  \big\l \rr^{l}_{{\rm X}_\ts}(G^\ts) , \Phi \big\r_{Q} := \sum_{k = 1}^N \int_{(k-1)\tau}^{k \tau}   \Big \l \frac{k \tau - t}{\tau} \rr_{X_\sigma} G^\ts_{k-1} + \frac{t - (k-1)\tau}{\tau} \rr_{{\rm X}_\sigma} G^\ts_{k}, \Phi_t \Big\r_\Omega\, \rd t\,,
\end{align}
and the piecewise constant reconstruction $ \rr^{a}_{{\rm X}_\ts}$ on ${\rm X}_\ts$ by 
\begin{align} \label{eq:tsrxa} 
   \big\l \rr^{a}_{{\rm X}_\ts}(G^\ts), \Phi \big\r_{Q} := \sum_{k = 1}^N  \int_{(k-1)\tau}^{k \tau} \Big\l \frac{1}{2} \rr_{{\rm X}_\sigma}G^\ts_{k-1} + \frac{1}{2}  \rr_{{\rm X}_\sigma}G^\ts_{k}, \Phi_t \Big \r_{\Omega}\, \rd t\,.
\end{align}
We also define the piecewise constant reconstructions $\rr_{{\rm Y}_\ts}$ and $\rr_{\ol{{\rm X}}_\ts}$ on ${\rm Y}_{\ts}$ and $\ol{{\rm X}}_{\ts}$, respectively, by 
\begin{align}
  \big\l \rr_{{\rm Y}_\ts}(q^\ts), \Phi  \big\r_{Q} := \sum_{k = 1}^N \int_{(k-1)\tau}^{k \tau}   \big\l \rr_{{\rm Y}_\si} q^\ts_k, \Phi_t  \big\r_{\Omega}\, \rd t\,, \q \forall \Phi \in C(Q,\R^{n \t k})  \,,  \label{eq:tsry}
\end{align} 
and 
\begin{align}
   \big\l \rr_{\ol{{\rm X}}_\ts}(R^\ts), \Phi  \big\r_{Q} := \sum_{k = 1}^N \int_{(k-1)\tau}^{k \tau}  \big\l \rr_{\ol{{\rm X}}_\si} R^\ts_k, \Phi_t  \big\r_{\Omega}\, \rd t\,, \q 
   \forall \Phi \in C(Q,\MM^n)   \,.  \label{eq:tsrz}
\end{align} 
\end{definition}

We next endow the finite-dimensional spaces ${\rm X}_\ts$, ${\rm Y}_\ts$, and $\ol{{\rm X}}_\ts$ with norms and inner products, so that they, together with the time-space reconstructions defined above, give the families of discrete approximations of $\M(Q,\S^n)$, $\M(Q,\R^{n \t k})$, and $\M(Q,\MM^n)$, respectively, in the sense of Definition \ref{def:discre_appro}. 

\begin{definition} \label{def:tsnorm}
We define the norm on ${\rm X}_\ts$ by $\norm{G^\ts}_{1,\ms{X}_{\ts}} := \sum_{k = 0}^N \tau \norm{G_k^\ts}_{1,\ms{X}_\si}$ for $G^\ts \in {\rm X}_{\ts}$ and the inner product by $\l G^\ts, Q^\ts \r_{\ms{X}_{\ts}} := \sum_{k = 0}^N \tau \l G_k^\ts, Q_k^\ts\r_{\ms{X}_\si}$ for $G^\ts, Q^\ts \in {\rm X}_{\ts}$. Similarly, we equip the space ${\rm Y}_{\ts}$ 
with the norm defined by $\norm{q^\ts}_{1,\ms{Y}_\ts} := \tau \sum_{i = 1}^N \norm{q^\ts_i}_{\ms{Y}_\si}$ for $q^\ts \in {\rm Y}_\ts$ and the inner product defined by $\l p^\ts, q^\ts\r_{\ms{Y}_\ts} := \tau \sum_{i 
= 1}^N  \l p_i^\ts, q^\ts_i\r_{\ms{Y}_\si}$ for $p^\ts, q^\ts \in {\rm Y}_\ts$. The norm $\norm{\dd}_{1,\ol{\ms{X}}_\ts}$ and the inner product $\l\dd,\dd\r_{\ol{\ms{X}}_\ts}$ on $\ol{{\rm X}}_{\ts}$ are defined in the same way as that of ${\rm Y}_{\ts}$. 
\end{definition}

By the above definitions, for $G^\ts \in {\rm X}_{\ts}$, there holds
\begin{equation*} 
\norm{\rr^{l/a}_{{\rm X}_\ts}(G^\ts)}_{\rm TV}   
    \le \tau \sum_{i = 0}^N \norm{\rr_{{\rm X}_\sigma}(G^\ts_i)}_{\rm TV}\,,
\end{equation*}
implying that $\{({\rm X}_{\ts}, \rr^{l/a}_{{\rm X}_{\ts}})\}_{(\ts) \in \Sigma_{\NN}}$ is a family of discrete approximations of $\M(Q,\S^n)$.  Here and in what follows, $\rr^{l/a}_{{\rm X}_\ts}$ denotes two cases: $\rr^{l}_{{\rm X}_\ts}$ and $\rr^{a}_{{\rm X}_\ts}$. Similarly, $\{({\rm Y}_\ts,\rr_{{\rm Y}_\ts})\}_{(\ts) \in \Sigma_\NN}$ and $\{(\ol{{\rm X}}_\ts,\rr_{\ol{{\rm X}}_\ts})\}_{(\ts) \in \Sigma_\NN}$ are discrete approximations of $\M(Q,\R^{n \t k})$ and $\M(Q,\MM^n)$, respectively, with
    \begin{alignat}{2} \label{eq:norm_rots_1}
    \norm{\rr_{{\rm Y}_\ts}(q^{\ts})}_{\rm TV}  & =  \tau \sum_{i = 1}^N \norm{\rr_{{\rm Y}_\sigma}(q^\ts_i)}_{\rm TV}\,, \q \norm{\rr_{\ol{{\rm X}}_\ts}(R^{\ts})}_{\rm TV} & =  \tau \sum_{i = 1}^N \norm{\rr_{\ol{{\rm X}}_\sigma}(R^\ts_i)}_{\rm TV}\,.
    \end{alignat}
For ease of exposition, we define the norm on the product space $\M_{\ts}$: for $\mu^\ts = (G^\ts,q^\ts,R^\ts) \in \M_\ts$,
\begin{equation*}
    \norm{\mu^\ts}_{1,\M_\ts} := \norm{G^\ts}_{1, \ms{X}_\ts} + \norm{q^\ts}_{1, \ms{Y}_\ts} + \norm{R^\ts}_{1, \ol{\ms{X}}_\ts}\,,
\end{equation*}
and the inner product $\l \dd, \dd \r_{\M_\ts}$ can be defined similarly. A direct calculation gives the dual norm $\norm{\dd}_{*, \M_\ts}$ \eqref{def:dualnorm_dis}:  
\begin{align*}
    \norm{\mu^\ts}_{*,\M_\ts} = \max \big\{\norm{G^\ts}_{*,{\rm X}_\ts}, \norm{q^\ts}_{*,{\rm Y}_\ts}, \norm{R^\ts}_{*,\ol{{\rm X}}_\ts}\big\}\,.
\end{align*}
Moreover, by the time-space reconstructions defined above, we introduce 
\begin{equation} \label{def:rotsall}
    \rr^{l/a}_{\ts}(G^\ts,q^\ts,R^\ts) := \big(\rr^{l/a}_{{\rm X}_\ts}(G^\ts), \rr_{{\rm Y}_{\ts}}(q^\ts), \rr_{\ol{\rm{X}}_\ts}(R^\ts)\big): \M_\ts \to \M(Q,\xx)\,,
\end{equation}
and the set of admissible reconstructions on $\M_\ts$: for $(\ts) \in \Sigma_\NN$,
\begin{multline} \label{def:addtsr}
     \rr^{\rm ad}_\ts =  \big\{ \rr^{l/a}_{\ts}\ \text{in \eqref{def:rotsall}}\,;  \ 
     \rr^{l/a}_{{\rm X}_\ts}, \rr_{{\rm Y}_{\ts}},\,\text{and}\ \rr_{\ol{{\rm X}}_{\ts}}\ \text{are defined in \eqref{eq:tsrxl}--\eqref{eq:tsrz} based on} \\ \text{
    admissible spatial reconstructions}\big\}\,.
\end{multline}
In what follows, we denote by $\widehat{\rr}^{l/a}_{{\rm X}_\ts}$, $\h{\rr}_{{\rm Y}_\ts}$, and $\h{\rr}_{\ol{{\rm X}}_\ts}$ the time-space reconstructions \eqref{eq:tsrxl}--\eqref{eq:tsrz} \mb{based on} quasi-isometric spatial ones. Then, we see from \eqref{eq:norm_rots_1} that the operators $\h{\rr}_{{\rm Y}_\ts}$ and $\h{\rr}_{\ol{{\rm X}}_\ts}$ are quasi-isometric as well, while $\widehat{\rr}^{l/a}_{{\rm X}_\ts}$ may not be. One can also check that $\rr^l_{{\rm X}_\ts}$ and $\rr^a_{{\rm X}_\ts}$ are equivalent. 


\subsection{Convergence analysis}\label{subsec:convergence}

The Lax equivalence principle reviewed above suggests that if the discrete model \eqref{prob:discre_ot} is stable and consistent in some sense, then its convergence could be expected. 
This subsection is devoted to the main result Theorem \ref{thm:main_conver}, whose proof follows from asymptotic lower and upper bounds in Theorems  \ref{thm:lower_abs} and \ref{thm:upper_abs}.
We start with the lower bound, which needs the following consistency between $\ms{D}^*$ and $\sd^*$ and between $\jj_\si^*$ and $\jj_\Omega^*$.

\begin{definition}[Consistent reconstructions]\label{def:cons_recons}
Given discrete objects in Definition \ref{def:build_block},
we say that the admissible reconstructions $\big[ \h{\rr}_{\ol{\rm X}_\si} \big]$ and $\big[\h{\rr}_{{\rm Y}_\si}\big]$ are consistent if  
    \begin{enumerate}[\textbullet]
        \item (consistency of  $\ms{D}^*$ and $\sd^*$). There exist
    $\rr_{{\rm Y}_\si} \in [\h{\rr}_{{\rm Y}_\si}]$  
        and $\{\ep_\si\}_{\si \in \Sigma} \subset \R_+$ with $\ep_\si \to 0$ as $\si \to 0$ such that 
        \begin{align} \label{eq:conserr_adjD}
            \big|\big\l  \rr_{{\rm Y}_\si} (q^\si),  \mathsf{D}^* \phi \big\r_{\Omega} + \big \l \h{\rr}_{{\rm X}_\sigma}  (\sd q^\si), \phi \big\r_{\Omega} \big| \le \ep_\si \norm{\phi}_{2,\infty} \norm{q^\sigma}_{1,\ms{Y}_\si}\,, \q \forall q^\si \in {\rm Y}_\si\,, \, \phi \in  C^2(\Omega, \S^n)\,.
        \end{align}
        \item (consistency of $\jj_\Omega^*$ and $\jj_\si^*$). 
        There exist
         $\w{\rr}_{{\rm X}_\si} \in [\h{\rr}_{{\rm X}_\si}]$, $\rr_{{\rm Y}_\si} \in [\h{\rr}_{{\rm Y}_\si}]$, and $\rr_{\ol{{\rm X}}_\si} \in [\h{\rr}_{\ol{{\rm X}}_\si}]$   
        such that
        \begin{align} \label{eq:bdd_adjoint}
            \jj^*_\si \big(G^\si,\h{\rr}^*_{{\rm Y}_\si} u ,\h{\rr}^*_{\ol{\rm X}_\si} W\big) \lesssim (\norm{u}^2_{\infty} + \norm{W}^2_{\infty})\norm{G^\si}_{1,\ms{X}_\si}\,, \q \forall G^\si \in {\rm X}_{\si}\,,\, (u,W) \in C(\Omega, \R^{n \t k} \t \mathbb{M}^n)\,,
        \end{align}
        and 
       for any $G^\si \in {\rm X}_{\si}$ and $(u,W)$ in a bounded set $E$ of
         $C^1(\Omega, \R^{n \t k} \t \mathbb{M}^n)$, 
        \begin{align} \label{est:conjaction}
            \jj^*_\si \big(G^\si,\rr^*_{{\rm Y}_\si} u, \rr^*_{\ol{\rm X}_\si} W  \big) \le \jj_{\Omega}^* \big(\w{\rr}_{{\rm X}_\si} G^\si, u, W \big) + \ep_\si C_E \norm{G^\si}_{1,\ms{X}_\si}\,,
        \end{align}
        for some sequence $\{\ep_\si\}_{\si \in \Sigma} \subset \R_+$ with $\ep_\si \to 0$ as $\si \to 0$ and constant $C_E > 0$ depending on $E$. 
    \end{enumerate}
\end{definition}

\noindent \mb{The condition \eqref{eq:bdd_adjoint} for $\jj_\si^*$ should be regarded as a stability condition. Here, we choose to introduce 
\eqref{eq:bdd_adjoint} and
\eqref{est:conjaction} together for ease of exposition.} 


\mb{For notational simplicity, in the following,} we shall denote by $\fint$ the average integration, e.g., $\fint_a^b = \frac{1}{b-a}\int_a^b\,$, and the generic sequence $\{\ep_\si\}_\si \subset \R_+$ with $\ep_\si \to 0$ as $\si \to 0$ may differ from line to line. We also sometimes use $x \lesssim y$ to denote an inequality $x \le C y$ for some generic $C > 0$.

\begin{theorem}[Asymptotic lower bound]\label{thm:lower_abs}
Given $\ms{G}^\si_0, \ms{G}^\si_1 \in  {\rm X}_{\sigma,+}$ 
as in Definition \ref{def:disc_transp} satisfying \eqref{eq:convdis_inifal}, let $\mu^\ts = (G^\ts, q^\ts, R^\ts) \in \ce_\ts([0,1]; \ms{G}_0^\si, \ms{G}_1^\si)$ for $(\ts) \in \Sigma_\NN$ be a sequence satisfying
    \begin{equation} \label{asp:bound_cost}
      \mathbf{J} := \sup_{(\ts) \in \Sigma_{\NN}} \jj_{\tau,\sigma}(\mu^\ts) < + \infty\,.
    \end{equation} 
Suppose there exist consistent reconstructions 
$\big[ \h{\rr}_{\ol{\rm X}_\si} \big]$ and $\big[\h{\rr}_{{\rm Y}_\si}\big]$ in the sense of Definition \ref{def:cons_recons}. Then, $\norm{\mu^\ts}_{1,\M_\ts}$ is bounded independent of $(\ts)$. Further, there exists a measure $\mu \in \ce_{\infty}([0,1];\ms{G}_0,\ms{G}_1)$ such that
    for any admissible reconstruction $ \rr_\ts \in \rr_\ts^{\rm ad}$ in \eqref{def:addtsr}, up to a subsequence, $\rr_\ts(\mu^\ts)$ weak* converges to $\mu$ with 
    \begin{equation} \label{eq:est_lowerbound}
        \jj_Q(\mu) \le \liminf_{(\ts) \to 0} \jj_{\tau,\sigma}(\mu^\ts)\,.
    \end{equation} 
\end{theorem}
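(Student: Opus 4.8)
\textbf{Proof plan for Theorem \ref{thm:lower_abs}.}

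The plan is to mimic the structure of the classical lower-bound argument for dynamic optimal transport (as in \cite{lavenant2019unconditional}), but adapted to the matrix-valued setting and to the abstract consistency conditions of Definition \ref{def:cons_recons}. The argument splits into three stages: (i) establish a uniform bound on $\norm{\mu^\ts}_{1,\M_\ts}$ from the cost bound \eqref{asp:bound_cost} and the discrete continuity equation; (ii) extract a weak* limit $\mu = (\ms{G},\ms{q},\ms{R})$ and verify it lies in $\ce_\infty([0,1];\ms{G}_0,\ms{G}_1)$; (iii) prove the lower-semicontinuity inequality \eqref{eq:est_lowerbound} via the dual (Fenchel) representation \eqref{eq:fenconj_cost}.

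For step (i): using the homogeneity of $\jj_\si$ and inequality \eqref{eq:disana_Cauchy_2}, one tests the quantities $q^\ts_k$ and $R^\ts_k$ against well-chosen discrete test functions and uses \eqref{eq:bdd_adjoint} to bound $\jj_\si^*$. Concretely, pairing $q^\ts_k$ with $\h{\rr}_{{\rm Y}_\si}^* u$ for unit-norm $u$ and similarly for $R^\ts_k$, \eqref{eq:disana_Cauchy_2} and \eqref{eq:bdd_adjoint} give $\norm{q_k^\ts}_{\ms{Y}_\si} + \norm{R_k^\ts}_{\ol{\ms{X}}_\si} \lesssim \sqrt{\jj_\si(\tfrac{G_{k-1}^\ts+G_k^\ts}{2}, q_k^\ts, R_k^\ts)} \cdot \sqrt{\tfrac12(\norm{G_{k-1}^\ts}_{1,\ms{X}_\si}+\norm{G_k^\ts}_{1,\ms{X}_\si})}$, and then Cauchy--Schwarz in $k$ together with \eqref{asp:bound_cost} controls $\norm{q^\ts}_{1,\ms{Y}_\ts}+\norm{R^\ts}_{1,\ol{\ms{X}}_\ts}$ in terms of $\mathbf{J}$ and $\sup_k \norm{G_k^\ts}_{1,\ms{X}_\si}$. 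To close the loop one bounds $\norm{G_k^\ts}_{1,\ms{X}_\si}$: testing the discrete continuity equation \eqref{eq:discontieq} against $\P_{{\rm X}_\si}$ of a discrete analogue of the identity (noting $\ms{D}^*_\si$ annihilates constants, inherited from $\ms{D}^*(I)=0$ via the consistency \eqref{eq:conserr_adjD}) controls $\tr$-type quantities, and combined with positivity $G_k^\ts \in {\rm X}_{\si,+}$ and the quasi-isometry of $\h{\rr}_{{\rm X}_\si}$ this yields a Gr\"onwall-type bound propagating from $\ms{G}_0^\si$, whose norm is bounded by \eqref{eq:convdis_inifal}. Then $\norm{G^\ts}_{1,\ms{X}_\ts}=\sum_k\tau\norm{G_k^\ts}_{1,\ms{X}_\si}$ is bounded, completing (i).

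For step (ii): by step (i) and Lemma \ref{lemma:unifrombdd} applied componentwise to $({\rm X}_\ts,\h{\rr}^{l/a}_{{\rm X}_\ts})$, $({\rm Y}_\ts,\h{\rr}_{{\rm Y}_\ts})$, $(\ol{{\rm X}}_\ts,\h{\rr}_{\ol{{\rm X}}_\ts})$, there is a subsequence along which every admissible reconstruction $\rr_\ts(\mu^\ts)$ weak* converges to a common limit $\mu=(\ms{G},\ms{q},\ms{R})\in\M(Q,\xx)$; the mapping properties of admissible reconstructions put $\ms{G}\in\M(Q,\S^n_+)$. Since $\rr^l_{{\rm X}_\ts}$ and $\rr^a_{{\rm X}_\ts}$ are equivalent, their limits agree, which is what forces the limiting $\ms{G}$ to be the time-continuous object. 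To verify $\mu\in\ce_\infty$, take $\Phi\in C^2(Q,\S^n)$, discretize in time (piecewise-linear nodal values for the $\p_t\Phi$ term, midpoint values for the rest), pair the discrete continuity equation \eqref{eq:discontieq} against $\Phi$, sum over $k$ by parts in time, and pass to the limit: the spatial commutation error is absorbed by \eqref{eq:conserr_adjD}, the time-discretization error is $O(\tau)$ by smoothness of $\Phi$, and the boundary terms reproduce $\int_\Omega\Phi_1\dd\rd\ms{G}_1-\int_\Omega\Phi_0\dd\rd\ms{G}_0$ using \eqref{eq:convdis_inifal}. Density of $C^2$ in $C^1$ gives \eqref{eq:weak_ctneq}. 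The finiteness $\jj_{\Lad,Q}(\mu)<+\infty$ follows from step (iii).

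For step (iii): by the Fenchel representation \eqref{eq:fenconj_cost}, $\jj_Q(\mu)=\sup\{\l(\ms{q},\ms{R}),(u,W)\r_Q-\jj_Q^*(\ms{G},u,W)\}$ over smooth $(u,W)$. Fix such a test pair $(u,W)\in C^1(Q,\R^{n\t k}\t\MM^n)$; the goal is to bound the integrand by $\liminf\jj_\ts(\mu^\ts)$. One discretizes $(u,W)$ via the adjoint reconstructions $\rr_{{\rm Y}_\si}^* u_t$, $\rr_{\ol{\rm X}_\si}^* W_t$ at the midpoint times, uses the discrete Fenchel inequality \eqref{eq: ana_Cauchy} at each time slab to write $\l q_k^\ts,\rr^*_{{\rm Y}_\si}u\r+\l R_k^\ts,\rr^*_{\ol{\rm X}_\si}W\r\le\gamma^{-2}\jj_\si(\cdot)+\gamma^2\jj_\si^*(\tfrac{G_{k-1}^\ts+G_k^\ts}{2},\rr^*_{{\rm Y}_\si}u,\rr^*_{\ol{\rm X}_\si}W)$, sum with weights $\tau$, and invoke the key consistency estimate \eqref{est:conjaction} to replace $\sum_k\tau\jj_\si^*(\cdots)$ by $\int_Q\jj_\Omega^*(\w{\rr}_{{\rm X}_\si}\tfrac{G_{k-1}^\ts+G_k^\ts}{2},u,W)$ plus $\ep_\si C_E\norm{G^\ts}_{1,\ms{X}_\ts}$. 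The left side converges to $\l(\ms{q},\ms{R}),(u,W)\r_Q$ by weak* convergence; the $\jj_\Omega^*$ term converges to $\int_Q\jj_\Omega^*(\ms{G},u,W)\rd t$ because $\w{\rr}_{{\rm X}_\si}$ (being admissible) is equivalent to $\h{\rr}_{{\rm X}_\si}$ so the reconstructed densities converge weak* to $\ms{G}$ and $\jj_\Omega^*$ is continuous/linear in its measure argument against the fixed smooth $(u,W)$; the error term vanishes by step (i). Optimizing over $\gamma$ and then taking the supremum over $(u,W)$ yields \eqref{eq:est_lowerbound}.

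\textbf{Main obstacle.} The delicate point is handling the half-sum argument $\tfrac{G_{k-1}^\ts+G_k^\ts}{2}$ appearing in $\jj_\si$ and matching it consistently with the piecewise-linear-in-time reconstruction $\rr^l_{{\rm X}_\ts}$: one must show that, in the limit, this staggered average produces exactly the limiting measure $\ms{G}$ and not some spurious time-shifted object, and that the quadratic-in-$(q,R)$, reciprocal-in-$G$ structure of $\jj_\si$ is jointly lower semicontinuous under the mixed weak* convergences. This is precisely why one passes through the dual formulation \eqref{eq:fenconj_cost}--\eqref{est:conjaction} rather than arguing directly on $\jj_\si$: the consistency estimate \eqref{est:conjaction} is the technical heart, and verifying that its error term is controlled uniformly (using step (i)) while the main term converges is where most of the care is needed. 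A secondary subtlety is the propagation bound on $\norm{G_k^\ts}_{1,\ms{X}_\si}$ in step (i), which relies essentially on the positivity cone ${\rm X}_{\si,+}$ mapping into $\M(\Omega,\S^n_+)$ under quasi-isometric reconstruction, so that the $L^1$-norm is comparable to a trace functional that is nearly conserved by the discrete dynamics.
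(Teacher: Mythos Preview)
Your proposal is correct and follows essentially the same three-stage approach as the paper: bounding $\norm{\mu^\ts}_{1,\M_\ts}$ via \eqref{eq:disana_Cauchy_2}, \eqref{eq:bdd_adjoint}, and a trace estimate on $G^\ts_k$ from the discrete continuity equation tested against $I$; extracting a weak* limit via Lemma \ref{lemma:unifrombdd} and verifying \eqref{eq:weak_ctneq} using \eqref{eq:conserr_adjD}; and proving \eqref{eq:est_lowerbound} through the dual representation \eqref{eq:fenconj_cost} combined with \eqref{est:conjaction}. The only cosmetic difference is that in step (iii) you carry the parameter $\gamma$ and optimize at the end, whereas the paper simply takes $\gamma=1$ (the plain Fenchel--Young inequality $\l q^\si,u^\si\r+\l R^\si,W^\si\r-\jj_\si^*\le\jj_\si$) from the start, which is cleaner and avoids the extra AM--GM step needed to recover the linear bound from $2\sqrt{\jj_\si\jj_\si^*}$.
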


\begin{proof}
We first show the boundedness of $\norm{\mu^\ts}_{1,\M_\ts}$. 
Recalling the time-space reconstructions
 $\widehat{\rr}^{l/a}_{{\rm X}_\ts}$, $\h{\rr}_{{\rm Y}_\ts}$, and $\h{\rr}_{\ol{{\rm X}}_\ts}$ defined above, we have, for any $u \in C(Q,\R^{n\t k})$ and $W \in C(Q,\MM^n)$ with $\norm{u}_{\infty} \le 1$ and $\norm{W}_{\infty} \le 1$,
\begin{align} \label{eq:bounnd}
    & \big\l \h{\rr}_{{\rm Y}_\ts} q^\ts, u \big\r_Q  + \big\l \h{\rr}_{\ol{{\rm X}}_\ts} R^\ts, W \big\r_Q = \sum_{i = 1}^N \tau \big \l q^\ts_i,  \h{\rr}^*_{{\rm Y}_\si} \ol{u}_i \big\r_{\ms{Y}_\si} + \tau  \big\l R^\ts_i ,  \h{\rr}^*_{\ol{\rm X}_\si} \ol{W}_i  \big\r_{\ol{\ms{X}}_\si} \notag \\ 
   \le\ &   2 \sqrt{\sum_{i = 1}^N \tau\jj_\si\left(\frac{G^\ts_{i-1} + G^\ts_i}{2}, q^\ts_i, R^\ts_i\right)} \sqrt{\sum_{i = 1}^N \tau \jj^*_\si \left(\frac{G^\ts_{i-1} + G^\ts_i}{2},\h{\rr}^*_{{\rm Y}_\si} \ol{u}_i, \h{\rr}^*_{{\rm X}_\si} \ol{W}_i \right)} \notag \\
   \lesssim \  & \sqrt{\mathbf{J}\, \sum_{i = 0}^N \tau \norm{G^\ts_i}_{1,\ms{X}_\si}}\,, 
\end{align} 
where functions $\ol{u}_i$ and $\ol{W}_i$ are defined by 
\begin{equation} \label{aux:aver}
\ol{u}_i(\dd): =  \fint_{(i - 1)\tau}^{i \tau} u(s,\dd) \, \rd s \,,\q \ol{W}_i(\dd): =  \fint_{(i - 1)\tau}^{i \tau} W(s,\dd) \, \rd s\,,
\end{equation}
 satisfying $\norm{\ol{u}_i}_\infty \le 1$ and  $\norm{\ol{W}_i}_\infty \le 1$. In proving \eqref{eq:bounnd}, the first inequality is by \eqref{eq:disana_Cauchy_2} and Cauchy's inequality; the second inequality is by assumptions \eqref{eq:bdd_adjoint} and \eqref{asp:bound_cost}. It then follows that 
\begin{align}  \label{eq:auxest_qr}
    \big \lVert \h{\rr}_{{\rm Y}_\ts}(q^\ts) \big \lVert_{\rm TV} +  \big \lVert 
 \h{\rr}_{\ol{\rm X}_\ts}(R^\ts) \big \lVert_{\rm TV}
    \lesssim  \sqrt{\mathbf{J} \normm{G^\ts}_{1,\ms{X}_\ts}}\,. 
\end{align}
We next act the quasi-isometric reconstruction $\h{\rr}_{{\rm X}_\si}$ on 
both sides of \eqref{eq:discontieq} with test function $I$, and obtain 
\begin{align*}
    \left \l \tau^{-1}\h{\rr}_{{\rm X}_\si}\big(G^\ts_{k} - G^\ts_{k-1}\big), I \right\r_{\Omega} = \left\l - \h{\rr}_{{\rm X}_\si}\big(\mathsf{D}_\sigma q^\ts_k\big) + \h{\rr}_{{\rm X}_\si}\big(\mathbb{P}_{\ms{X}_\si} R^\ts_k\big), I \right\r_\Omega \,,\q k = 1,\ldots, N\,,
\end{align*}
which implies, for $i,j \in \{1,\ldots, N\}$ with $i < j$,
\begin{align} \label{eq:auxest_g}
    \left|\tr \h{\rr}_{{\rm X}_\si} \big(G^\ts_{j}\big)(\Omega) - \tr \h{\rr}_{{\rm X}_\si} \big(G^\ts_{i}\big)(\Omega) \right| & \lesssim \sum_{k = i+1}^j \tau \left(\ep_\si \normm{q^\ts_k}_{1,\ms{Y}_\si} + \big \lVert\h{\rr}_{\ol{\rm X}_\si}R^\ts_k \big \lVert_{\rm TV}\right)\,,
\end{align}
by the relation \eqref{eq:invari_space} and the estimate $ |\l \h{\rr}_{{\rm X}_\sigma}  (\sd q^\ts_k), I \r_{\Omega}|\lesssim \ep_\si \norm{q^\ts_k}_{1,\ms{Y}_\si}$ from \eqref{eq:conserr_adjD} with $\phi = I$ (note $\ms{D}^*(I) = 0$). 
Recalling $G_i^\ts \in {\rm X}_{\si,+}$ and $\h{\rr}_{{\rm X}_\si}: {\rm X}_{\si,+} \to \M(\Omega, \S^n_+)$, 
by \eqref{eq:auxest_g}, we have
\begin{equation*}
    \big \lVert G^\ts_j \big \lVert_{1,\ms{X}_\si} \lesssim \norm{\ms{G}^\si_0}_{1,\ms{X}_\si} + \big \lVert \h{\rr}_{{\rm Y}_\ts} q^\ts  \big \lVert_{\rm TV} +  \big \lVert \h{\rr}_{\ol{\rm X}_\ts} R^\ts \big \lVert_{\rm TV}\,, \q  j = 1,\ldots,N\,,
\end{equation*}
thanks to the quasi-isometric property, which, along with \eqref{eq:auxest_qr}, yields 
\begin{equation} \label{auueq}
    \normm{G^\ts}_{1,\ms{X}_\ts} =  \sum_{ i =  0}^N \tau \normm{G^\ts_i}_{1,\ms{X}_\si} \lesssim  \normm{\ms{G}_0^\si}_{1,\ms{X}_\si} + \sqrt{ \mathbf{J} \normm{G^\ts}_{1,\ms{X}_\ts}}\,.
\end{equation}
Note that $\norm{\ms{G}^\si_0}_{1,\ms{X}_\si}$ is bounded by the weak* convergence  \eqref{eq:convdis_inifal}. By arguments similar to \cite[(3.31)]{li2020general1}, the estimate \eqref{auueq} gives that $\norm{G^\ts}_{1,\ms{X}_\ts}$ is bounded by a constant independent of $(\ts)$, and so is $\norm{q^\ts}_{1, \ms{Y}_{\ts}} + \norm{R^\ts}_{1, \ol{\ms{X}}_\ts}$ by \eqref{eq:auxest_qr} and the quasi-isometric reconstruction again. 
We have proved the boundedness of $\norm{\mu^\ts}_{1,\M_\ts}$. Then, by Lemma \ref{lemma:unifrombdd}, up to a subsequence, for any $\rr_\ts \in \rr^{\rm ad}_\ts$, $\{\rr_{\ts}(\mu^\ts)\}_{\ts}$ weak* converges to some $\mu \in \mc{M}(Q,\xx)$. 

Next, we show that the limiting measure $\mu$ satisfies the continuity equation. We consider a special family of reconstructions $\rr_{\ts} := (\h{\rr}^{l}_{{\rm X}_\ts}, \rr_{{\rm Y}_{\ts}}, \h{\rr}_{\ol{\rm X}_\ts}) \in \rr^{\rm ad}_\ts$, where 
 $\rr_{{\rm Y}_\ts}$ is given in \eqref{eq:tsry} with $\rr_{{\rm Y}_\si}$ such that \eqref{eq:conserr_adjD} holds. 
We will show that for any $\Phi\in C^2(Q,\S^n)$, 
\begin{align} \label{eq:desiredlim}
    \lim_{(\tau,\sigma) \to 0} \Big\l \h{\rr}^{l}_{{\rm X}_\ts} G^\ts, \p_t\Phi \Big\r_{Q}  +   \Big\l \rr_{{\rm Y}_\ts}q^\ts, \mathsf{D^*} \Phi \Big\r_{Q} + \Big\l \h{\rr}_{\ol{{\rm X}}_\ts}R^\ts, \Phi \Big\r_Q  = \l \ms{G}_1, \Phi_1\r_\Omega - \l \ms{G}_0, \Phi_0\r_\Omega\,.
 \end{align} 
 For this,  we start with the following calculation:   
\begin{align} 
    & \Big\l \h{\rr}^{l}_{{\rm X}_\ts} G^\ts, \p_t\Phi \Big\r_{Q}  +   \Big\l \rr_{{\rm Y}_\ts}q^\ts, \mathsf{D^*} \Phi \Big\r_{Q} + \Big\l \h{\rr}_{\ol{{\rm X}}_\ts}R^\ts, \Phi \Big\r_Q \notag \\
  = & -\sum_{k = 1}^N \int_{(k-1)\tau}^{k\tau} \Big \l \tau^{-1} \Big(\h{\rr}_{{\rm X}_\sigma}G^\ts_{k} - \h{\rr}_{{\rm X}_\sigma}G^\ts_{k-1}\Big), \Phi_t \Big\r_\Omega + \left\l \h{\rr}_{{\rm X}_\sigma} \ms{G}^\si_1,  \Phi_1 \right\r_{\Omega} -  \left\l \h{\rr}_{{\rm X}_\sigma}\ms{G}^\si_0,  \Phi_0 \right\r_{\Omega} \, \rd t \notag \\
   & + \sum^N_{k = 1} \int_{(k-1)\tau}^{k\tau} \Big\l \rr_{{\rm Y}_\sigma} q^\ts_k, \mathsf{D}^* \Phi_t \Big\r_\Omega \, \rd t + \sum^N_{k = 1} \int_{(k-1)\tau}^{k\tau} \Big\l \h{\rr}_{\ol{\rm X}_\sigma}R^\ts_k, \Phi_t \Big\r_\Omega \, \rd t \notag \\
   = &\   \Big\l \h{\rr}_{{\rm X}_\sigma}\ms{G}^\si_1,  \Phi_1 \Big\r_{\Omega} -  \Big\l \h{\rr}_{{\rm X}_\sigma}\ms{G}^\si_0,  \Phi_0 \Big\r_{\Omega} + O\left(\ep_\si\norm{q^\ts}_{1,\ms{Y}_{\ts}} \norm{\Phi}_{2,\infty}\right)\,, \label{eq:auxest_3}
\end{align}
where the first equality is by definitions \eqref{eq:tsrxl}--\eqref{eq:tsrz} with integration by parts; the second equality is by the following estimate derived by acting $\h{\rr}_{{\rm X}_\si}$ on \eqref{eq:discontieq} with test functions $\Phi \in C^2(\Omega,\S^n)$ and using \eqref{eq:invari_space} and \eqref{eq:conserr_adjD}:
\begin{align*}
    0 & = \big \l \h{\rr}_{{\rm X}_\sigma} (\tau^{-1}(G^\ts_{k} - G^\ts_{k-1})), \Phi_t \big\r_{\Omega} + \big\l \h{\rr}_{{\rm X}_\sigma} \left(\sd q^\ts_k \right), \Phi_t \big\r_{\Omega}  - \big \l \h{\rr}_{\ol{\rm X}_\sigma} R^\ts_k, \Phi_t \big\r_{\Omega} \\
    & = \big\l \h{\rr}_{{\rm X}_\sigma} (\tau^{-1}(G^\ts_{k} - G^\ts_{k-1})), \Phi_t \big\r_{\Omega} - \big \l \rr_{{\rm Y}_\sigma} q^\ts_k, \ms{D}^* \Phi_t \big\r_{\Omega}  -  \big\l \h{\rr}_{\ol{\rm X}_\sigma} R^\ts_k, \Phi_t \big\r_{\Omega} + O\left(\ep_\si\norm{q^\ts_k}_{1,\ms{Y}_\si} \norm{\Phi}_{2,\infty}\right),
\end{align*}  
for $k = 1\,\ldots, N$. Then, by \eqref{eq:convdis_inifal}, the boundedness of $\norm{q^\ts}_{1, \ms{Y}_{\ts}}$, and the weak* convergence (up to a subsequence) of $\rr_{\ts}(\mu^\ts)$,  taking the limit on both sides of \eqref{eq:auxest_3} gives the desired \eqref{eq:desiredlim}.

Finally, we prove the estimate \eqref{eq:est_lowerbound}. Recalling the formula \eqref{eq:fenconj_cost}, for any $\ep > 0$, we can choose $u \in C^1(Q, \R^{n \t k})$ and $W \in C^1(Q, \mathbb{M}^n)$ such that 
\begin{align} \label{eq:auxest_5}
    \jj_Q(\mu) \le  \l \ms{(q,R)}, (u,W)\r_{Q} - \jj_Q^*(\ms{G},u,W) + \ep \,,
\end{align}
where $\mu = \ms{(G,q,R)}$ is the limiting measure defined above. Then, let 
$\w{\rr}^a_{{\rm X}_\ts}, \rr_{{\rm Y}_{\ts}}$, and  $\rr_{\ol{\rm X}_\ts}$ be the reconstructions defined in \eqref{eq:tsrxa}--\eqref{eq:tsrz} based on the spatial ones such that \eqref{est:conjaction} holds.
 By \eqref{eq:auxest_5} and \eqref{def:jalpha_conj}
 and  the weak* convergence of the reconstructed measures,  we have
\begin{equation}\label{eq:auxest_4}
    \jj_Q(\mu) \le  \lim_{(\ts) \to 0}  \Big\l \big(\rr_{{\rm Y}_\ts} q^\ts, \rr_{\ol{{\rm X}}_\ts} R^\ts \big), (u,W) \Big\r_{Q}  + \ep  - \sum_{i = 1}^N  \int_{(i-1)\tau}^{i \tau}\jj_{\Omega}^* \Big(\w{\rr}_{{\rm X}_\si}\Big(\frac{G^\ts_{i-1} + G^\ts_i}{2} \Big), u_t, W_t \Big)\,\rd t \,. 
\end{equation}
Since $\jj_\Omega^*$ is convex with respect to $(u,W)$, by Jensen's inequality and \eqref{est:conjaction}, we have 
\begin{align*}
  &\sum_{i = 1}^N  \int_{(i-1)\tau}^{i \tau}\jj_\Omega^*\Big(\w{\rr}_{{\rm X}_\si}\Big(\frac{G^\ts_{i-1} + G^\ts_i}{2} \Big), u_t, W_t\Big)\, \rd t \ge  \sum_{i = 1}^N   \tau \jj_\Omega^*\Big(\w{\rr}_{{\rm X}_\si}\Big(\frac{G^\ts_{i-1} + G^\ts_i}{2} \Big), \ol{u}_i, \ol{W}_i \Big) \\
\ge  & \sum_{i = 1}^N \tau \jj^*_\si\Big(\frac{G^\ts_{i-1} + G^\ts_i}{2}, \rr^*_{{\rm Y}_\si} \ol{u}_i, \rr^*_{\ol{\rm X}_\si} \ol{W}_i \Big) -  \ep_\sigma C \norm{G^\ts}_{1, \ms{X}_{\ts}}\,,
\end{align*}
where $\ol{u}_i$ and $\ol{W}_i$ are given as in \eqref{aux:aver}.
Combining the above inequality with \eqref{eq:auxest_4}, we arrive at  
\begin{multline}
    \jj_Q(\mu) \le  \liminf_{(\ts) \to 0} \bigg(\sum_{i = 1}^N \tau \Big\l \big(\rr_{{\rm Y}_\si} q^\ts_i, \rr_{\ol{\rm X}_\si} R_i^\ts\big), (\ol{u}_i,\ol{W}_i)\Big\r_{\Omega}  + \ep +  \ep_\sigma C \norm{G^\ts}_{1, \ms{X}_{\ts}}   \\  - \sum_{i = 1}^N \tau \jj^*_\si\Big(\frac{G^\ts_{i-1} + G^\ts_i}{2}, \rr^*_{{\rm Y}_\si} \ol{u}_i, \rr^*_{\ol{\rm X}_\si} \ol{W}_i \Big) \bigg) \le \liminf_{(\ts) \to 0}\jj_\ts(\mu^\ts) + \ep \,,
\end{multline}
by noting that $\jj_\si$ is the conjugate of $\jj_\si^*$ and the definition \eqref{eq:discost} of $\jj_\ts$. The proof is complete as $\ep$ is arbitrary.
\end{proof}

We next consider the asymptotic upper bound. For this, we introduce consistent sampling operators.

\begin{definition}\label{def:cons_sampl}
Let $\ss_{\ol{{\rm X}}_\si}$ and $\ss_{{\rm Y}_\si}$ be the bounded linear operators from $C(\Omega, \MM^n)$ to $(\ol{{\rm X}}_\si, \norm{\dd}_{*,\ol{\ms{X}}_\si})$ and  $C(\Omega, \R^{n \t k})$ to $({\rm Y}_\si,\norm{\dd}_{*,\ms{Y}_\si})$, respectively, such that $\ss_{\ol{{\rm X}}_\si}$ maps $C(\Omega, \S^n)$ to ${\rm X}_\si$ and $C(\Omega, \AA^n)$ to ${\rm X}^\perp_\si$.   
We say that they are families of 
consistent sampling operators if the following conditions hold:  
\begin{enumerate}[\textbullet]
    \item (consistency of $\ms{D}$ and $\sd$). There exists $\{\ep_\si\}_{\si \in \Sigma} \subset \R_+$ with $\ep_\si \to 0$ as $\si \to 0$ such that 
    \begin{align} \label{eq:conserr_D}
        \norm{\sd \ss_{{\rm Y}_\sigma} (\phi) - \ss_{\ol{{\rm X}}_\si} \mathsf{D} (\phi) 
        }_{*,\ms{X}_\si} \le \ep_\sigma  \norm{\phi}_{2,\infty}\,,  
    \end{align}
    for $\phi \in C^\infty(\Omega,\R^{n \t k})$ satisfying $\h{\ms{D}_1}(-i \nu)\phi = 0$ on $\p \Omega$. 
    \item (consistency of $\jj_\Omega$ and $\jj_\si$). For $C^\infty$-smooth $\xx$-valued field $(G,q,R)$ satisfying $\norm{(G,q,R)}_{2,\infty} \le E < + \infty$ and  $G \succeq \eta I$ for some $\eta > 0$ and $\h{\ms{D}_1}(-i\nu)q = 0$ on $\p \Omega$, there exist a constant $C_{E,\eta} > 0$ depending on $E$ and $\eta$ and a sequence $\{\ep_\si\}_{\si \in \Sigma} \subset \R_+$ with $\ep_\si \to 0$ as $\si \to 0$ such that 
    \begin{align} \label{eq:consis_energy}
        \jj_\si (\ss_{\ol{{\rm X}}_\si}(G), \ss_{{\rm Y}_\si}(q), \ss_{\ol{{\rm X}}_\si}(R) + {\rm e}_{\si,q}) \le \jj_\Omega(\mu) + C_{E,\eta}\, \ep_\sigma \,,
    \end{align} 
    where $\mu = (G,q,R)\, \rd x \in \mc{M}(\Omega,\xx)$ and ${\rm e}_{\si,q} = \sd \ss_{{\rm Y}_\sigma} (q) - \ss_{\ol{{\rm X}}_\si} \mathsf{D} (q)$.
\end{enumerate}
\end{definition}

\begin{remark}
The sampling operators play a similar role as interpolation operators in the Lax equivalence theorem. In practice, they are typically defined as the adjoint of reconstructions. It is also worth noting that compared to the consistency conditions \eqref{eq:conserr_adjD}--\eqref{est:conjaction} in Definition \ref{def:cons_recons}, which are defined for the dual objects, the consistency conditions in Definition \ref{def:cons_sampl} above are defined on the primal objects. 
\end{remark}

For ease of exposition, we introduce the following assumption. 

\begin{enumerate}[label=$\textbf{A}*$,ref=$\text{A}*$]
\item \label{asm:end_Point}  Let $\ms{G}_0^\si, \ms{G}_1^\si \in {\rm X}_{\si,+}$ be given in Definition \ref{def:disc_transp}.
For any $\mu \in \ce_\infty([0,1];\ms{G}_0,\ms{G}_1)$, let $\mu^\ep \in \ce_\infty([0,1]; \ms{G}^\ep_0,\ms{G}^\ep_1)$ be its  \emph{$\ep$-regularization} in Proposition \ref{prop:regular_ep} with smooth density $(G^\ep,q^\ep,R^\ep)$. We assume that for $i = 0 , 1$, there exists $\mu^\ts_i \in \ce_\ts\big([0,1]; \ms{G}_i^\si, \ss_{\ol{{\rm X}}_\si} G_i^\ep\big)$
for $(\ts) \in \Sigma_\NN$ such that 
\begin{align}  \label{assumpest}
  \ww_{\ep,i}:= \limsup_{(\ts) \to 0} \jj_\ts(\mu_i^\ts) \to 0\,,\q  \text{as} \ \, \ep \to 0\,.
\end{align}
\end{enumerate}

\begin{theorem}[Asymptotic upper bound]\label{thm:upper_abs}
    Let $\mu \in \ce_\infty([0,1];\ms{G}_0,\ms{G}_1)$ for given  $\ms{G}_0, \ms{G}_1 \in \M(\Omega, \S^n_+)$. Suppose that 
    there exist consistent sampling operators $\ss_{\ol{{\rm X}}_\si}$ and $\ss_{{\rm Y}_\si}$ in the sense of Definition \ref{def:cons_sampl}, and that 
    the assumptions \eqref{1}, \eqref{2}, and \eqref{asm:end_Point} hold.
    Then, for any $\eta > 0$, we can construct a discrete curve $\mu^\ts \in \ce_\ts([0,1];\ms{G}_0^\si, \ms{G}^\si_1)$ such that 
    \begin{equation} \label{est:asyupper}
        \limsup_{(\ts) \to 0} \jj_{\tau,\sigma}(\mu^\ts) \le \jj_Q(\mu) + \eta\,.
    \end{equation}
\end{theorem}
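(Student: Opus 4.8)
\emph{Proof proposal.} The plan is to bring the abstract consistency conditions of Definition~\ref{def:cons_sampl} to bear on a smoothed, strictly positive definite approximation of $\mu$, and then to assemble the discrete competitor $\mu^\ts$ from three temporal blocks glued along the time axis: a long ``main'' block obtained by sampling the smooth curve, sandwiched between two short ``ramp'' blocks furnished by the hypothesis \eqref{asm:end_Point}. First, given $\eta>0$, I would apply Proposition~\ref{prop:regular_ep} (legitimate under \eqref{1}--\eqref{2}) to obtain, for small $\ep$, the $\ep$-regularization $\mu^\ep=(\ms{G}^\ep,\ms{q}^\ep,\ms{R}^\ep)\in\ce_\infty([0,1];\ms{G}_0^\ep,\ms{G}_1^\ep)$ with $C^\infty$ densities $(G^\ep,q^\ep,R^\ep)$, $G^\ep\succeq cI$ on $Q$, and $\jj_Q(\mu^\ep)\to\jj_Q(\mu)$ as $\ep\to0$; since $\mu^\ep$ solves \eqref{eq:weak_ctneq} with a smooth density, $q^\ep$ moreover satisfies the boundary condition $\widehat{\ms{D}_1}(-i\nu)q^\ep=0$ on $\p\Omega$, which is precisely what is needed to invoke \eqref{eq:conserr_D} and \eqref{eq:consis_energy}. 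I then fix $\delta\in(0,1/2)$ and, for $L>0$, reparametrize $\mu^\ep$ onto $[0,L]$, obtaining the curve $s\mapsto(G^{\ep,L}_s,q^{\ep,L}_s,R^{\ep,L}_s):=\bigl(G^\ep_{s/L},\tfrac1L q^\ep_{s/L},\tfrac1L R^\ep_{s/L}\bigr)$; this still solves the smooth continuity equation $\p_s G^{\ep,L}+\ms{D}q^{\ep,L}=(R^{\ep,L})^{\sym}$, still has $G^{\ep,L}\succeq cI$ and the same spatial boundary condition on $q^{\ep,L}$, and, by the degree-$2$ homogeneity of $\jj_\Omega$ in its last two arguments, $\int_0^L\jj_\Omega\bigl((G^{\ep,L}_s,q^{\ep,L}_s,R^{\ep,L}_s)\rd x\bigr)\rd s=\tfrac1L\jj_Q(\mu^\ep)$.

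For the main block I set $N_0:=\lceil\delta N\rceil$, $N_2:=N-2N_0$ (so $N_2\tau\to1-2\delta$), and take $L=L_\tau:=N_2\tau$; the block is the $N_2$-step discrete curve of step size $\tau$ defined by $G_k:=\ss_{\ol{{\rm X}}_\si}(G^{\ep,L}_{k\tau})$, $q_k:=\ss_{{\rm Y}_\si}(q^{\ep,L}_{(k-1/2)\tau})$, and
\[
R_k:=\ss_{\ol{{\rm X}}_\si}\bigl(R^{\ep,L}_{(k-1/2)\tau}\bigr)+{\rm e}_{\si,q^{\ep,L}_{(k-1/2)\tau}}+\ss_{\ol{{\rm X}}_\si}\Bigl(\tfrac1\tau\bigl(G^{\ep,L}_{k\tau}-G^{\ep,L}_{(k-1)\tau}\bigr)-\p_s G^{\ep,L}_{(k-1/2)\tau}\Bigr).
\]
Since $\ss_{\ol{{\rm X}}_\si}$ is linear and maps $C(\Omega,\AA^n)$ into ${\rm X}_\si^\perp$ (so that $\P_{{\rm X}_\si}\ss_{\ol{{\rm X}}_\si}=\ss_{\ol{{\rm X}}_\si}((\cdot)^{\sym})$) while ${\rm e}_{\si,q}\in{\rm X}_\si$, a direct substitution using the smooth continuity equation shows $(G_k,q_k,R_k)_k$ satisfies the discrete continuity equation \eqref{eq:discontieq} exactly, with endpoints $\ss_{\ol{{\rm X}}_\si}G_0^\ep$ and $\ss_{\ol{{\rm X}}_\si}G_1^\ep$; applying \eqref{eq:consis_energy} to the slice $(G^{\ep,L}_{k\tau},0,0)$ forces $G_k\in{\rm X}_{\si,+}$, and the midpoint averages then lie in ${\rm X}_{\si,+}$ by convexity of that cone. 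For the cost, I would apply \eqref{eq:consis_energy} slicewise with $G_{\mathrm{slice}}:=\tfrac12(G^{\ep,L}_{(k-1)\tau}+G^{\ep,L}_{k\tau})$ (so that $\ss_{\ol{{\rm X}}_\si}G_{\mathrm{slice}}=\tfrac12(G_{k-1}+G_k)$), $q_{\mathrm{slice}}:=q^{\ep,L}_{(k-1/2)\tau}$, $R_{\mathrm{slice}}:=R^{\ep,L}_{(k-1/2)\tau}$; the remaining $O(\tau^2)$ time-truncation term inside $R_k$ is absorbed via the convexity and degree-$2$ homogeneity of $\jj_\si$ in $(q,R)$, which give $\jj_\si(G,q,R+W)\le\bigl(\sqrt{\jj_\si(G,q,R)}+\sqrt{\jj_\si(G,0,W)}\bigr)^2$, together with a second application of \eqref{eq:consis_energy} controlling $\jj_\si(\ss_{\ol{{\rm X}}_\si}G_{\mathrm{slice}},0,W)$ by $O(\tau^4)+C\ep_\si$. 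Summing over $k$ and using that $\jj_\Omega(\,\cdot\,\rd x)$ is locally Lipschitz in its $G$-slot near $G^{\ep,L}\succeq cI$, the main block's cost is at most a midpoint Riemann sum $\tau\sum_k\jj_\Omega\bigl((G^{\ep,L}_{(k-1/2)\tau},q^{\ep,L}_{(k-1/2)\tau},R^{\ep,L}_{(k-1/2)\tau})\rd x\bigr)+o(1)$, which converges to $\int_0^{L_\tau}\jj_\Omega(\mu^{\ep,L}_s)\rd s=\tfrac1{L_\tau}\jj_Q(\mu^\ep)\to\tfrac1{1-2\delta}\jj_Q(\mu^\ep)$.

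For the ramp-in block I invoke \eqref{asm:end_Point} at the parameter $(1/N_0,\si)$: it yields $\mu_0^{(1/N_0,\si)}\in\ce_{(1/N_0,\si)}\bigl([0,1];\ms{G}_0^\si,\ss_{\ol{{\rm X}}_\si}G_0^\ep\bigr)$, an $N_0$-step discrete curve; keeping its $G$-iterates and rescaling momentum and reaction by $1/(N_0\tau)=N/N_0$ turns it into an $N_0$-step curve of step size $\tau$ from $\ms{G}_0^\si$ to $\ss_{\ol{{\rm X}}_\si}G_0^\ep$ whose discrete action equals $\tfrac{N}{N_0}\jj_{(1/N_0,\si)}\bigl(\mu_0^{(1/N_0,\si)}\bigr)$, again by degree-$2$ homogeneity. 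Since $(1/N_0,\si)\to0$ and $N/N_0\le1/\delta$ as $(\ts)\to0$, its $\limsup$ is $\le\tfrac1\delta\,\ww_{\ep,0}$; the ramp-out block is the rescaled time-reversal of the curve supplied by \eqref{asm:end_Point} for $i=1$, with the same discrete cost (as $\jj_\si$ is even under $(q,R)\mapsto(-q,-R)$), hence $\limsup\le\tfrac1\delta\,\ww_{\ep,1}$. Concatenating the three blocks — their endpoints match at the junctions, and all $G$-iterates and midpoint averages stay in the convex cone ${\rm X}_{\si,+}$ — produces $\mu^\ts\in\ce_\ts([0,1];\ms{G}_0^\si,\ms{G}_1^\si)$, and additivity of $\jj_\ts$ over the blocks gives
\[
\limsup_{(\ts)\to0}\jj_{\tau,\sigma}(\mu^\ts)\ \le\ \frac{1}{1-2\delta}\,\jj_Q(\mu^\ep)+\frac1\delta\bigl(\ww_{\ep,0}+\ww_{\ep,1}\bigr).
\]
It then remains to choose $\delta$ small so that $\tfrac1{1-2\delta}\jj_Q(\mu)<\jj_Q(\mu)+\tfrac\eta2$, and afterwards $\ep$ small so that $\tfrac1{1-2\delta}\jj_Q(\mu^\ep)<\tfrac1{1-2\delta}\jj_Q(\mu)+\tfrac\eta4$ (using $\jj_Q(\mu^\ep)\to\jj_Q(\mu)$) and $\tfrac1\delta(\ww_{\ep,0}+\ww_{\ep,1})<\tfrac\eta4$ (using \eqref{assumpest}), which delivers \eqref{est:asyupper}.

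The main obstacle is the cost estimate for the main block: one must reconcile the time-staggering (values of $G$ at integer times versus $q,R$ at half-integer times, and the midpoint average $\tfrac12(G_{k-1}+G_k)$ that enters $\jj_\ts$ through \eqref{eq:discost}) with the single-slice consistency estimate \eqref{eq:consis_energy}, and show that the two unavoidable mismatch terms — the commutator ${\rm e}_{\si,q}$ coming from \eqref{eq:conserr_D} and the $O(\tau^2)$ time-truncation error needed to make \eqref{eq:discontieq} hold \emph{exactly} — are dominated by the homogeneity/convexity of $\jj_\si$ and the vanishing $\ep_\si$. A secondary technical point is the bookkeeping for the time-rescaling that fits the $\ce$-curves of \eqref{asm:end_Point} into blocks of the rigid grid $\{k/N\}_{k=0}^N$; the degree-$2$ homogeneity of $\jj_\si$ in $(q,R)$ makes the rescaling exact, at the cost of the factor $N/N_0\to1/\delta$, which is why the ramp contributions carry the prefactor $1/\delta$ and must be controlled through $\ww_{\ep,i}\to0$.
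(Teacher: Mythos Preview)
Your proposal is correct and follows essentially the same three-block strategy as the paper: regularize $\mu$ via Proposition~\ref{prop:regular_ep}, build the main block by sampling the smooth curve, attach two short ramps from \eqref{asm:end_Point}, and finish by choosing first $\delta$ then $\ep$. The paper packages the main-block construction into a separate Lemma~\ref{lem:regular_curve} and uses \emph{time averages} $\bar q_i=\fint_{(i-1)\tau}^{i\tau}q_t\,\rd t$, $\bar R_i=\fint_{(i-1)\tau}^{i\tau}R_t\,\rd t$ rather than midpoint values; this makes the discrete continuity equation hold exactly with only the spatial commutator ${\rm e}_{\si,\bar q_i}$ and no extra $O(\tau^2)$ time-truncation correction, so the subsequent cost estimate is a direct application of \eqref{eq:consis_energy} without the additional convexity/homogeneity splitting you invoke. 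One small point: your argument that ``applying \eqref{eq:consis_energy} to the slice $(G^{\ep,L}_{k\tau},0,0)$ forces $G_k\in{\rm X}_{\si,+}$'' does not work, since $\jj_\si(G,0,0)=0$ for every $G$ by degree-$2$ homogeneity; the positivity $G_k\in{\rm X}_{\si,+}$ is not deducible from Definition~\ref{def:cons_sampl} alone and is in fact tacit in the paper's Lemma~\ref{lem:regular_curve} as well --- it holds in the concrete scheme of Section~\ref{sec:conver_example} because $\ss_{\ol{{\rm X}}_\si}$ integrates against the nonnegative hat functions $\h\phi_v$.
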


Its proof relies on the following lemma. 

\begin{lemma} \label{lem:regular_curve}
    Let $\mu \in \ce_\infty([0,1];\ms{G}_0,\ms{G}_1)$ be a measure with $C^\infty$-smooth density $(G,q,R)$ with respect to $\rd t \otimes \rd x$ with $G \succeq c_0 I$ on $Q$ for some $c > 0$. Then, there exists 
    $\mu^\ts \in \ce_\ts([0,1];  \ss_{\ol{{\rm X}}_\si}G_0, \ss_{\ol{{\rm X}}_\si}G_1)$ with the estimate: for some constant $C > 0$ depending on $\norm{\mu}_{2,\infty}$ and $c$, and some sequence $\{\ep_\si\}_{\si\in\Sigma} \subset \R_+$ with $\ep_\si \to 0$ as $\si \to 0$, 
    \begin{equation} \label{est:regular_curve}
        \jj_{\tau,\sigma}(\mu^{\tau,\sigma}) \le \jj_Q(\mu) + C_{c_0,\norm{\mu}_{2,\infty}}(\ep_\si + \tau)\,.
    \end{equation}
\end{lemma}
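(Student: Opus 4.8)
The plan is to discretize the smooth curve $\mu = (G,q,R)$ directly using the consistent sampling operators, then correct the discrete continuity equation so that the sampled data actually lies in $\ce_\ts$, and finally control the cost via the consistency of $\jj_\si$ with $\jj_\Omega$. Concretely, first I would set $G^\ts_i := \ss_{\ol{{\rm X}}_\si}(G_{i\tau})$ for $i = 0, \ldots, N$, $q^\ts_k := \ss_{{\rm Y}_\si}(q_{(k-1/2)\tau})$ for $k = 1, \ldots, N$, and then \emph{define} $R^\ts_k$ by whatever is forced by the discrete continuity equation \eqref{eq:discontieq}, namely
\begin{equation*}
    R^\ts_k := \P_{{\rm X}_\si}^{-1}\text{-free form: } \quad \tau^{-1}(G^\ts_k - G^\ts_{k-1}) + \mathsf{D}_\si q^\ts_k = \P_{{\rm X}_\si}R^\ts_k\,,
\end{equation*}
i.e., take $R^\ts_k := \ss_{\ol{{\rm X}}_\si}(R_{(k-1/2)\tau}) + {\rm e}^{\rm time}_k + {\rm e}_{\si,q_k}$, where ${\rm e}_{\si,q_k} = \sd \ss_{{\rm Y}_\si}(q_{(k-1/2)\tau}) - \ss_{\ol{{\rm X}}_\si}\mathsf{D}(q_{(k-1/2)\tau})$ is the spatial consistency defect from \eqref{eq:conserr_D} and ${\rm e}^{\rm time}_k$ is the temporal defect $\tau^{-1}(G^\ts_k - G^\ts_{k-1}) - \ss_{\ol{{\rm X}}_\si}((\partial_t G)_{(k-1/2)\tau})$ coming from replacing the time derivative by a difference quotient (note $\partial_t G + \mathsf{D}q = R$ holds pointwise since $\mu \in \ce_\infty$ with smooth density). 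With this choice, the tuple $\mu^\ts = (G^\ts, q^\ts, R^\ts)$ satisfies \eqref{eq:discontieq} by construction, and the endpoints are $\ss_{\ol{{\rm X}}_\si}G_0, \ss_{\ol{{\rm X}}_\si}G_1$ as required; one must also check $G^\ts_i \in {\rm X}_{\si,+}$, which follows from $\ss_{\ol{{\rm X}}_\si}$ mapping $C(\Omega,\S^n_+)$-valued data appropriately (this should be built into the admissibility/consistency framework, or can be arranged since $G \succeq c_0 I$ gives a uniform lower bound so that small perturbations stay positive — though here $G^\ts_i$ is exactly $\ss_{\ol{{\rm X}}_\si}G_{i\tau}$, no perturbation).

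Next I would estimate the cost. By Definition \ref{def:tsnorm} and \eqref{eq:discost},
\begin{equation*}
    \jj_\ts(\mu^\ts) = \tau \sum_{k=1}^N \jj_\si\Big(\tfrac{G^\ts_{k-1}+G^\ts_k}{2}, q^\ts_k, R^\ts_k\Big)\,.
\end{equation*}
I want to compare $\jj_\si$ at the sampled triple with the continuous cost $\int_0^1 \jj_\Omega(\mu_t)\,\rd t$. The key tool is the consistency estimate \eqref{eq:consis_energy}, which is tailored exactly for this: applied with the smooth field $(G_s, q_s, R_s)$ at time $s = (k-1/2)\tau$ — which satisfies the hypotheses of Definition \ref{def:cons_sampl} ($\norm{\cdot}_{2,\infty}$ bounded by $\norm{\mu}_{2,\infty}$, $G_s \succeq c_0 I$, and $\h{\ms{D}_1}(-i\nu)q_s = 0$ on $\partial\Omega$ since $\mu \in \ce_\infty$ encodes the homogeneous boundary condition on $\ms{q}$) — it gives
\begin{equation*}
    \jj_\si\big(\ss_{\ol{{\rm X}}_\si}(G_s), \ss_{{\rm Y}_\si}(q_s), \ss_{\ol{{\rm X}}_\si}(R_s) + {\rm e}_{\si,q_s}\big) \le \jj_\Omega(\mu_s) + C_{\norm{\mu}_{2,\infty}, c_0}\, \ep_\si\,.
\end{equation*}
The remaining discrepancies between this and the actual summand $\jj_\si\big(\tfrac{G^\ts_{k-1}+G^\ts_k}{2}, q^\ts_k, R^\ts_k\big)$ are: (i) the first argument is $\tfrac{1}{2}(\ss_{\ol{{\rm X}}_\si}G_{(k-1)\tau} + \ss_{\ol{{\rm X}}_\si}G_{k\tau})$ rather than $\ss_{\ol{{\rm X}}_\si}G_{(k-1/2)\tau}$, a $O(\tau^2)$ error in the smooth density by Taylor expansion; (ii) $R^\ts_k$ carries the extra temporal defect ${\rm e}^{\rm time}_k$, which is $O(\tau)$ (in the relevant discrete norm, uniformly, using smoothness of $\partial_t^2 G$). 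Using that $\jj_\si$ is convex, $(-1)$-homogeneous in the first variable and $2$-homogeneous in the others — together with the lower bound $G^\ts_{k-1}, G^\ts_k \succeq (c_0 - o(1))I$ (so the first argument is bounded below, keeping $\jj_\si$ finite and locally Lipschitz there) — these perturbations contribute $O(\tau)$ after multiplying by $\tau$ and summing $N = 1/\tau$ terms. Summing over $k$, the Riemann sum $\tau\sum_k \jj_\Omega(\mu_{(k-1/2)\tau})$ converges to $\int_0^1 \jj_\Omega(\mu_t)\,\rd t = \jj_Q(\mu)$ with error $O(\tau)$ by smoothness, and collecting all error terms yields \eqref{est:regular_curve} with $C_{c_0,\norm{\mu}_{2,\infty}}(\ep_\si + \tau)$.

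The main obstacle I anticipate is item (ii): controlling the effect of adding the temporal defect ${\rm e}^{\rm time}_k$ to the third slot of $\jj_\si$ without a quantitative modulus of continuity for $\jj_\si$ on that slot. The clean way around it is to avoid perturbing $\jj_\si$'s argument at all — instead, split via convexity: write $\jj_\si(\tfrac{G^\ts_{k-1}+G^\ts_k}{2}, q^\ts_k, R^\ts_k)$ and use $2$-homogeneity plus the parallelogram/Cauchy-type bound (analogue of \eqref{eq:disana_Cauchy_2} and \eqref{eq: ana_Cauchy}) to separate the "main" part (the consistently-sampled $R_s + {\rm e}_{\si,q_s}$) from the "defect" part ($R^\ts_k$ minus that), bounding the latter by $\jj_\si(\tfrac{G^\ts_{k-1}+G^\ts_k}{2}, 0, {\rm e}^{\rm time}_k) \lesssim \jj_\si^*$-dual-paired estimate $\lesssim \norm{{\rm e}^{\rm time}_k}^2 / (\text{coercivity from } G \succeq c_0 I)$, which is $O(\tau^2)$ per step and hence $O(\tau)$ in total. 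A subtlety to handle carefully is that $\jj_\si$ is only jointly convex, so the split must be done as a convex combination $\jj_\si(\bar G, q, R) \le \lambda \jj_\si(\bar G, q/\lambda, R_{\rm main}/\lambda) + (1-\lambda)\jj_\si(\bar G, 0, R_{\rm def}/(1-\lambda))$ with $\lambda = 1 - O(\tau)$, using $2$-homogeneity to reabsorb the $1/\lambda, 1/(1-\lambda)$ factors; the bookkeeping is routine but must be done to extract a clean $\ep_\si + \tau$ bound. The boundary-condition hypothesis $\h{\ms{D}_1}(-i\nu)q = 0$ needed by \eqref{eq:conserr_D} and \eqref{eq:consis_energy} is guaranteed because $\mu \in \ce_\infty([0,1];\ms{G}_0,\ms{G}_1)$ with smooth $q$ forces exactly this homogeneous boundary condition, as discussed after \eqref{eq:weak_ctneq}.
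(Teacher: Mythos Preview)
Your overall strategy is sound and can be made rigorous, but the paper takes a cleaner route that sidesteps precisely the obstacle you flag. Instead of sampling $q$ and $R$ at midpoints $t=(k-\tfrac12)\tau$, the paper uses \emph{time averages}
\[
\ol{q}_k=\fint_{(k-1)\tau}^{k\tau} q_t\,\rd t,\qquad \ol{R}_k=\fint_{(k-1)\tau}^{k\tau} R_t\,\rd t,
\]
and sets $G^\ts_i=\ss_{\ol{{\rm X}}_\si}(G_{i\tau})$, $q^\ts_k=\ss_{{\rm Y}_\si}(\ol{q}_k)$, $R^\ts_k=\ss_{\ol{{\rm X}}_\si}(\ol{R}_k)+{\rm e}_{\si,\ol{q}_k}$. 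Because $\tau^{-1}(G_{k\tau}-G_{(k-1)\tau})=\fint_{(k-1)\tau}^{k\tau}\partial_t G_t\,\rd t$ exactly, the discrete continuity equation \eqref{eq:discontieq} holds with \emph{no temporal defect}: the only correction is the spatial consistency error ${\rm e}_{\si,\ol{q}_k}$, which is already built into the hypothesis \eqref{eq:consis_energy}. Moreover, since $\ss_{\ol{{\rm X}}_\si}$ is linear, $\tfrac12(G^\ts_{k-1}+G^\ts_k)=\ss_{\ol{{\rm X}}_\si}\big(\tfrac12(G_{(k-1)\tau}+G_{k\tau})\big)$, so the summand $\jj_\si\big(\tfrac{G^\ts_{k-1}+G^\ts_k}{2},q^\ts_k,R^\ts_k\big)$ is \emph{exactly} of the form appearing on the left of \eqref{eq:consis_energy} with the smooth triple $\big(\tfrac12(G_{(k-1)\tau}+G_{k\tau}),\ol{q}_k,\ol{R}_k\big)$. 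One then passes to $\jj_\Omega$ with error $C\ep_\si$, and a Taylor expansion on the continuous side (using $G\succeq c_0 I$ so $G^{-1}$ is smooth) gives the $O(\tau)$ term.

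Your point-value construction introduces the extra defect ${\rm e}^{\rm time}_k$, and your proposed convexity split then needs a bound on $\jj_\si(\bar G,0,{\rm e}^{\rm time}_k)$. In the abstract framework there is no direct coercivity estimate of the form $\jj_\si(\bar G,0,R)\lesssim \norm{R}^2/c_0$; to justify it you must use $2$-homogeneity to write ${\rm e}^{\rm time}_k=\tau^2\,\ss_{\ol{{\rm X}}_\si}(\tilde R)$ with $\tilde R$ bounded in $C^2$, pull out $\tau^4$, and only then invoke \eqref{eq:consis_energy} on $(\bar G,0,\tilde R)$. This works, but it is extra bookkeeping that the paper's time-averaging trick avoids entirely. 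In short: both approaches are correct; the paper's choice of averages rather than point values makes the discrete curve fit the consistency hypothesis \eqref{eq:consis_energy} on the nose, so no splitting argument is needed.
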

\begin{proof}
By the smoothness of density $(G,q,R)$, we define $\mu^\ts = (G^\ts, q^\ts,R^\ts) \in \mc{M}_\ts$ by  
    \begin{align} \label{cons:discrete_curve}
        \big(G_i^\ts, q_i^\ts,R_i^\ts\big) := \left(\ss_{\ol{{\rm X}}_\si} (G_{i\tau}),  \ss_{{\rm Y}_\si} (\ol{q}_i) , \ss_{\ol{{\rm X}}_\si} (\ol{R}_i) + {\rm e}_{\si,\ol{q}_i} \right)\,, \q  i = 1,\ldots, N\,,
    \end{align}
with $G^\ts_0 := \ss_{\ol{{\rm X}}_\si}(G_t|_{t = 0})$, where smooth functions $\ol{q}_i$, $\ol{R}_i$, and the consistency error ${\rm e}_{\si,\ol{q}_i} \in {\rm X}_\si$ are defined by 
\begin{equation*}
\ol{q}_i = \fint^{i \tau}_{(i - 1)\tau} q_t \, \rd t\,,\q \ol{R}_i = \fint^{i \tau}_{(i - 1)\tau} R_t \, \rd t\,,\q  {\rm e}_{\si,\ol{q}_i} :=  \sd \ss_{{\rm Y}_\sigma} (\ol{q}_i) - \ss_{\ol{{\rm X}}_\si} \mathsf{D} (\ol{q}_i)\,.
\end{equation*}
Recalling from Definition \ref{def:cons_sampl} that $\ss_{\ol{{\rm X}}_\si}^*$ maps from ${\rm X}_\si$ to $\M(\Omega, \S^n)$, we have
\begin{align*} 
    \left\l  M^\si, \P_{{\rm X}_\si} \ss_{\ol{{\rm X}}_\si}\ol{R}_i\right\r_{\ms{X}_\si} =   \left\l \ss^*_{\ol{{\rm X}}_\si} M^\si, \ol{R}_i \right\r_{\Omega}  = \left\l  M^\si, \ss_{\ol{{\rm X}}_\si} (\ol{R}_i)^{\rm sym}  \right\r_{\ms{X}_\si}\,, \q \forall M^\si \in {\rm X}_\si\,,
\end{align*}
and then there holds $\mu^\ts \in \ce_\ts([0,1]; \ss_{\ol{{\rm X}}_\si}G_0, \ss_{\ol{{\rm X}}_\si}G_1)$ by the construction \eqref{cons:discrete_curve}.

We next show \eqref{est:regular_curve}. First, it follows from  \eqref{eq:consis_energy} and \eqref{cons:discrete_curve} that for $k = 1,\ldots, N$, 
    \begin{align*}
         \jj_\sigma \Big(\frac{G^\ts_{k-1} + G^\ts_k}{2},q^\ts_k, R^\ts_k \Big) \le \jj_\Omega \Big(\frac{G_{(k-1)\tau} + G_{k\tau}}{2}, \ol{q}_i, \ol{R}_i \Big)  + C \ep_\sigma  \,.
        \end{align*} 
\mb{Thanks to the smoothness of $(G,q,R)$ and the uniform positivity $G \succeq c_0 I$, recalling definitions  \eqref{eq:expre_J} and \eqref{def:costmeasure},  we estimate, for small enough $\tau$, 
\begin{align*}
    \jj_\Omega \Big(\frac{G_{(k-1)\tau} + G_{k\tau}}{2}, \ol{q}_i, \ol{R}_i \Big) = \jj_\Omega \Big(G_{(k-\frac{1}{2})\tau}, q_{(k-\frac{1}{2})\tau}, R_{(k-\frac{1}{2})\tau} \Big) + O(\tau)\,,
\end{align*}
where we use the expansion \cite{bhatia2013matrix}:  $(A + H)^{-1} = A^{-1} - A^{-1}H A^{-1} + O(\norm{H}^2)$ as $\norm{H} \to 0$ for invertible $A$. Then, by the definition \eqref{eq:discost} of $\jj_{\tau,\sigma}(\mu^{\tau,\sigma})$,  there holds}
\begin{align*}
    \jj_{\tau,\sigma}(\mu^{\tau,\sigma}) & \le \tau \sum^N_{k  = 1} \jj_\Omega \Big(\frac{G_{(k-1)\tau} + G_{k\tau}}{2}, \ol{q}_i, \ol{R}_i \Big) + C \ep_\si \\
    & \le \int_0^1 \jj_\Omega(\mu_t)\, \rd t  +  C (\ep_\si + \tau)\,. \qedhere
\end{align*}
\end{proof}

\begin{proof}[Proof of Theorem \ref{thm:upper_abs}] 
We fix a small $t^*_0 \in (0,1)$ and denote $t_0 = n \tau$ for small enough $\tau$, where $n$ is uniquely determined by $n\tau \le t^*_0 < (n + 1)\tau$. Recall the assumption \eqref{asm:end_Point} and let $G_i^\ep$ ($i = 0,1$) be the density of $\ms{G}_i^\ep$. There exists $\mu^{\frac{\tau}{t_0},\si,(i)} \in \ce_{\frac{\tau}{t_0},\si}\big([0,1]; \ms{G}_i^\si, \ss_{\ol{{\rm X}}_\si} G_i^\ep\big)$
    satisfying,  for some $\{ \ep_{(\ts),i}\}_{(\ts) \in \Sigma_\NN}$ 
    with $\ep_{(\ts),i} \to 0$ as $(\ts) \to 0$, 
    \begin{equation}\label{eq:auxes_end}
        \jj_{\frac{\tau}{t_0},\si}\Big(\mu^{\frac{\tau}{t_0},\si,(i)}\Big) \le \ww_{\ep,i} + \ep_{(\ts),i}\,,
    \end{equation}  
     By Lemma \ref{lem:regular_curve}, we can construct  
    $\mu^{\frac{\tau}{1 -2 t_0},\si,(2)} \in \ce_{\frac{\tau}{1 -2 t_0},\si}\big([0,1];\ss_{\ol{{\rm X}}_\si} G^\ep_0,\ss_{\ol{{\rm X}}_\si} G^\ep_1\big)$  with the estimate:
    \begin{equation} \label{eq:auxes_reg} 
        \jj_{\frac{\tau}{1 -2 t_0},\sigma}\Big(\mu^{\frac{\tau}{1 -2 t_0},\sigma,(2)}\Big) \le \jj_Q(\mu^\ep) + C_{\ep,\norm{\mu^\ep}_{2,\infty}}\Big(\ep_\si + \frac{\tau}{1 - 2t_0}\Big)\,,
    \end{equation}
    where $\mu^\ep$ is the $\ep$-regularization of $\mu$ and $C_{\ep,\norm{\mu^\ep}_{2,\infty}}$ is 
    independent of parameters $\si$, $\tau$, and $t_0$. We glue the above discrete curves and define $\mu^{\ts}$ by
\begin{equation*} 
    \mu^\ts = \left\{ 
    \begin{aligned}
    & \left(G^{\frac{\tau}{t_0},\si,(0)}_i, \frac{1}{t_0} q^{\frac{\tau}{t_0},\si,(0)}_i,\, \frac{1}{t_0}R^{\frac{\tau}{t_0},\,\si,(0)}_i \right)\,, &&  i = 1,\ldots, \frac{t_0}{\tau}\,, \\ 
    & \left(G^{\frac{\tau}{1 -2 t_0},\si,(2)}_{i - \frac{t_0}{\tau}},\, \frac{1}{1- 2 t_0} q^{\frac{\tau}{1 -2 t_0} ,\si,(2)}_{i - \frac{t_0}{\tau}},\, \frac{1}{1- 2 t_0} R^{\frac{\tau}{1 -2 t_0},\si,(2)}_{i - \frac{t_0}{\tau}} \right)\,, && i = \frac{t_0}{\tau} + 1,\ldots, \frac{1 - t_0}{\tau} \,, \\
    & \left(G_{i - \frac{\tau}{1 - t_0}}^{\frac{\tau}{t_0},\si,(1)},\, \frac{1}{t_0}q_{i - \frac{\tau}{1 - t_0}} ^{\frac{\tau}{t_0},\si,(1)},\, \frac{1}{t_0}R_{i - \frac{\tau}{1 - t_0}} ^{\frac{\tau}{t_0},\si,(1)} \right)\,, && i= \frac{1 - t_0}{\tau} +1,\ldots, \frac{1}{\tau} \,.
    \end{aligned}
        \right.
    \end{equation*} 
    It is clear that $\mu^\ts \in \ce_\ts([0,1];\ms{G}_0^\si,\ms{G}_1^\si)$. 
    By definition \eqref{eq:discost} of $\jj_\ts$ and estimates \eqref{eq:auxes_end} and \eqref{eq:auxes_reg}, we have 
    \begin{align*} 
        \jj_{\ts}(\mu^\ts) & = \frac{1}{t_0} \jj_{\frac{\tau}{t_0},\si}\big(\mu^{\frac{\tau}{t_0},\si,(0)}\big)  + \frac{1}{1 - 2t_0} \jj_{\frac{\tau}{1 - 2t_0},\si}\big(\mu^{\frac{\tau}{1 - 2t_0},\si,(2)}\big)  + \frac{1}{t_0} \jj_{\frac{\tau}{t_0},\si}\big(\mu^{\frac{\tau}{t_0},\si,(1)}\big) \\ 
        & \le \frac{2}{t_0}\big(\ww_\ep + \ep_{(\ts)}\big) + \frac{1}{1 - 2t_0} \Big(\jj_Q(\mu^\ep) + C_{\ep,\norm{\mu^\ep}_{2,\infty}}\Big(\ep_\sigma + \frac{\tau}{1 - 2t_0}\Big) \Big)\,,
    \end{align*} 
    where $\ww_\ep = \max\{ \ww_{\ep,0}, \ww_{\ep,1}\}$ and $\ep_{(\ts)} = \max \{\ep_{(\ts),0},\ep_{(\ts),1}\}$. 
    It readily follows that  
    \begin{align} \label{auxest:upper}
      \limsup_{(\ts) \to 0}  \jj_{\ts}(\mu^\ts) \le \frac{2}{t^*_0}\ww_\ep  + \frac{\jj_Q(\mu^\ep) - \jj_Q(\mu) + 2 t^*_0 \jj_Q(\mu)}{1 - 2t^*_0} + \jj_Q(\mu)\,,
    \end{align}
 for arbitrary fixed $\ep$ and $t_0^*$.  Then, for given $0 < \eta < 1$, we first choose $t_0^*$ small enough such that 
    \begin{equation*}
        \frac{2 t_0^*}{1 - 2 t^*_0} \jj_Q(\mu) \le \frac{\eta}{3}\,,
    \end{equation*}
    and then, by \eqref{assumpest} and Proposition \ref{prop:regular_ep}, choose $\ep$ small enough such that 
    \begin{equation*}
        \frac{2}{t^*_0}\ww_\ep  \le \frac{\eta}{3}\,,\q \frac{\jj_Q(\mu^\ep) - \jj_Q(\mu)}{1 - 2t^*_0}  \le \frac{\eta}{3}\,,
    \end{equation*}  
    which, by \eqref{auxest:upper}, implies the 
  desired \eqref{est:asyupper}.
\end{proof}

We are now ready to give an abstract convergence result for the discrete OT model \eqref{prob:discre_ot}.

\begin{theorem} \label{thm:main_conver}
Suppose that \eqref{prob:discre_ot} admits a minimizer $\mu^\ts_*$ for each $(\ts) \in \Sigma_{\NN}$, and there exists consistent spatial reconstructions $\big[\h{\rr}_{\ol{{\rm X}}_\si}\big]$ and $\big[\h{\rr}_{{\rm Y}_\si}\big]$ in the sense of Definition \ref{def:cons_recons} and consistent samplings $\ss_{\ol{{\rm X}}_\si}$ and $\ss_{{\rm Y}_\si}$ in the sense of Definition \ref{def:cons_sampl}, and
    the assumptions \eqref{1}, \eqref{2}, and \eqref{asm:end_Point} hold. Then, there exists a minimizer $\mu_*$ to \eqref{eq:distance} such that
    for any $ \rr_\ts \in \rr_\ts^{\rm ad}$ in \eqref{def:addtsr}, up to a subsequence, $\rr_\ts(\mu_*^\ts)$ 
     weak* converges to $\mu_*$, and there holds 
    \begin{align*}
        \lim_{(\ts) \to 0} \jj_{\tau,\sigma}(\mu^\ts_*) =  \jj_Q(\mu_*)\,.
    \end{align*} 
\end{theorem}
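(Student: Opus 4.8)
The plan is to combine the asymptotic lower bound (Theorem~\ref{thm:lower_abs}) and the asymptotic upper bound (Theorem~\ref{thm:upper_abs}) in the classical way used to deduce $\Gamma$-convergence-type results for minimizers. First, I would verify that the hypotheses of both theorems are met: the consistent reconstructions $\big[\h{\rr}_{\ol{{\rm X}}_\si}\big]$, $\big[\h{\rr}_{{\rm Y}_\si}\big]$ feed Theorem~\ref{thm:lower_abs}, while the consistent samplings $\ss_{\ol{{\rm X}}_\si}$, $\ss_{{\rm Y}_\si}$ together with assumptions \eqref{1}, \eqref{2}, \eqref{asm:end_Point} feed Theorem~\ref{thm:upper_abs}. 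We also need a minimizer $\mu_*$ of \eqref{eq:distance} to exist and be in $\ce_\infty([0,1];\ms{G}_0,\ms{G}_1)$; this is recorded in Section~\ref{sec:notation} via the Fenchel--Rockafellar theorem, and membership in $\ce_\infty$ is automatic since $\jj_{\Lad,Q}(\mu_*) = {\rm WB}^2_\Lad(\ms{G}_0,\ms{G}_1) < +\infty$.

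The upper bound step goes as follows: fix the minimizer $\mu_*$ of \eqref{eq:distance}. For any $\eta > 0$, Theorem~\ref{thm:upper_abs} produces a \emph{recovery sequence} $\mu^\ts_\eta \in \ce_\ts([0,1];\ms{G}_0^\si,\ms{G}_1^\si)$ with $\limsup_{(\ts)\to 0}\jj_{\ts}(\mu^\ts_\eta) \le \jj_Q(\mu_*) + \eta$. Since $\mu^\ts_*$ is a minimizer of \eqref{prob:discre_ot} and $\mu^\ts_\eta$ is admissible, $\jj_\ts(\mu^\ts_*) \le \jj_\ts(\mu^\ts_\eta)$, so $\limsup_{(\ts)\to 0}\jj_\ts(\mu^\ts_*) \le \jj_Q(\mu_*) + \eta$; letting $\eta \downarrow 0$ gives $\limsup_{(\ts)\to 0}\jj_\ts(\mu^\ts_*) \le \jj_Q(\mu_*)$. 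In particular the quantity $\mathbf{J} := \sup_{(\ts)}\jj_\ts(\mu^\ts_*)$ is finite along a suitable tail, so the cost-boundedness hypothesis \eqref{asp:bound_cost} of Theorem~\ref{thm:lower_abs} holds for the minimizing sequence $\{\mu^\ts_*\}$.

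The lower bound step then applies Theorem~\ref{thm:lower_abs} to $\{\mu^\ts_*\}$: it yields boundedness of $\norm{\mu^\ts_*}_{1,\M_\ts}$, a limit measure $\mu \in \ce_\infty([0,1];\ms{G}_0,\ms{G}_1)$, and — along a subsequence — weak* convergence $\rr_\ts(\mu^\ts_*)\to\mu$ for \emph{every} admissible reconstruction $\rr_\ts \in \rr_\ts^{\rm ad}$, with $\jj_Q(\mu) \le \liminf_{(\ts)\to 0}\jj_\ts(\mu^\ts_*)$. Chaining the two bounds gives
\begin{equation*}
\jj_Q(\mu) \le \liminf_{(\ts)\to 0}\jj_\ts(\mu^\ts_*) \le \limsup_{(\ts)\to 0}\jj_\ts(\mu^\ts_*) \le \jj_Q(\mu_*)\,.
\end{equation*}
Since $\mu_*$ minimizes \eqref{eq:distance}, i.e. $\jj_Q(\mu_*) \le \jj_Q(\mu)$ for every $\mu \in \ce_\infty([0,1];\ms{G}_0,\ms{G}_1)$, all inequalities are equalities: $\mu$ is itself a minimizer of \eqref{eq:distance}, and $\lim_{(\ts)\to 0}\jj_\ts(\mu^\ts_*) = \jj_Q(\mu) = \jj_Q(\mu_*)$. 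Renaming $\mu$ as $\mu_*$ (or noting that the identified limit \emph{is} a minimizer) completes the argument; the full limit along the original parameter family, rather than a subsequence, follows by the standard subsequence-of-subsequences trick since every subsequence admits a further subsequence converging to a minimizer with the same (unique) limiting value $\jj_Q(\mu_*) = {\rm WB}^2_\Lad(\ms{G}_0,\ms{G}_1)$.

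The conceptual content here is light — this is the routine ``$\liminf$--$\limsup$ squeeze'' once the two hard theorems are in hand — so the only thing requiring care is bookkeeping: ensuring the subsequence extracted in Theorem~\ref{thm:lower_abs} is compatible with the $\limsup$ bound (which is a bound along the full family, hence survives passage to any subsequence), and that the recovery sequence in Theorem~\ref{thm:upper_abs} is built for the \emph{same} discrete data $\ms{G}_0^\si,\ms{G}_1^\si$ as the minimizers $\mu^\ts_*$. I expect no real obstacle at this stage; the difficulty of the paper is entirely front-loaded into the consistency/stability verifications behind Theorems~\ref{thm:lower_abs} and~\ref{thm:upper_abs} and into the $\ep$-regularization of Proposition~\ref{prop:regular_ep}.
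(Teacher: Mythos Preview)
Your proposal is correct and follows essentially the same route as the paper: apply Theorem~\ref{thm:upper_abs} to bound $\limsup_{(\ts)\to 0}\jj_\ts(\mu^\ts_*)$ from above by the optimal continuous cost, use this to verify the boundedness hypothesis~\eqref{asp:bound_cost}, then apply Theorem~\ref{thm:lower_abs} to extract a weak* limit $\mu$ with $\jj_Q(\mu)\le\liminf_{(\ts)\to 0}\jj_\ts(\mu^\ts_*)$, and squeeze. The only cosmetic difference is that the paper compares against an arbitrary $\mu\in\ce_\infty$ rather than a pre-selected continuous minimizer, and your explicit subsequence-of-subsequences remark for the full cost limit is a point the paper leaves implicit.
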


\begin{proof}
For any given $\mu \in \ce_\infty([0,1];\ms{G}_0,\ms{G}_1)$ and $\ep > 0$,  by Theorem \ref{thm:upper_abs}, there exists $\mu^\ts$ satisfying 
    \begin{align*}
        \limsup_{(\ts) \to 0} \jj_{\tau,\sigma}(\mu_*^\ts) \le\limsup_{(\ts) \to 0} \jj_{\tau,\sigma}(\mu^\ts) \le \jj_Q(\mu) + \ep\,,
    \end{align*} 
    which gives
$\sup \{\jj_{\tau,\sigma}(\mu_*^\ts)\,;\ (\ts) \in \Sigma_\NN \}< +\infty$. Then, by Theorem \ref{thm:lower_abs}, 
    for $\rr_\ts \in \rr_\ts^{\rm ad}$, we can find 
     a subsequence of $\rr_\ts (\mu_*^\ts)$ (still indexed by $\ts$)  weak* converging to some $\mu_* \in \ce([0,1];\ms{G}_0,\ms{G}_1)$ with the estimate:
    \begin{align*}
      \jj_Q(\mu_*) \le  \liminf_{(\ts) \to 0} \jj_{\tau,\sigma}(\mu_*^\ts) \le  \limsup_{(\ts) \to 0} \jj_{\tau,\sigma}(\mu_*^\ts)  \le \jj_Q(\mu) + \ep \,.
    \end{align*}
    Since $\ep$ is arbitrary, we readily have
\begin{align*}
    \jj_Q(\mu_*) = \lim_{(\ts) \to 0} \jj_{\tau,\sigma}(\mu_*^\ts) \le \jj_Q(\mu)\,.
\end{align*}
for any $\mu \in \ce_\infty([0,1];\ms{G}_0,\ms{G}_1)$. The proof is complete. 
\end{proof}

Our consistency conditions in Definitions \ref{def:cons_recons} and \ref{def:cons_sampl} are largely inspired by (A3)-(A6) in \cite[Definition 2.9]{lavenant2019unconditional}. Similar conditions were also proposed in \cite[Section 3.1]{carrillo2022primal} for the finite difference approximation for the JKO scheme. 
A key feature of our framework is the introduction of discrete norms and admissible reconstructions in Definitions \ref{def:discre_appro} and \ref{def:build_block} motivated by the Lax equivalence theorem, which emphasizes the role played by the quasi-isometric reconstructions. More precisely, as we can see, some reconstructions involved in Definition \ref{def:cons_recons} have to be quasi-isometric. These facts could help to design a concrete numerical scheme and further verify its convergence. 

We thereby propose the following procedures for constructing a convergent discretization scheme for \eqref{eq:distance}, which will be the main task of the next section. Suppose that a family of meshes for the computation domain with mesh size $\si$ is given. First, we construct discrete approximations $(\ol{{\rm X}}_\si, \h{\rr}_{\ol{{\rm X}}_\si})$ and $({\rm Y}_\si, \h{\rr}_{{\rm Y}_\si})$ of $\M(\Omega,\MM^n)$ and $\M(\Omega, \R^{n \t k})$, respectively, with quasi-isometric $\h{\rr}_{\ol{{\rm X}}_\si}$ and $\h{\rr}_{{\rm Y}_\si}$ in the sense of \eqref{def:quasirecons}.
Second, we construct the discrete derivation $\sd$ and the discrete functional $\jj_\si$, as well as (if necessary) other families of reconstructions equivalent to quasi-isometric ones, such that the conditions in Definition \ref{def:cons_recons} hold. Third, we introduce sampling operators $\ss_{\ol{{\rm X}}_\si}$ and $\ss_{{\rm Y}_\si}$ (usually, the adjoint of reconstructions) such that the conditions in Definition \ref{def:cons_sampl} hold.

\section{Concrete numerical scheme} \label{sec:conver_example}
In this section, we apply the above abstract framework to design a concrete convergent numerical scheme for the problem \eqref{eq:distance}, which is closely related to the finite element method (FEM).
We will follow the basic mesh setup in FEM.
We assume that there exists a family of conforming and quasi-uniform triangulations $(\TT^\si, \V^\si)$ of $\Omega \subset \R^d$ with $\si$ being the mesh size: $\sigma := \max_{K \in \TT^\si} {\rm diam}(K)$, where ${\rm diam}(\dd)$ is the diameter of a set: ${\rm diam}(K): = \sup\{|x-y|\,;\ x,y \in K\}$; 
$\TT^\si$ denotes the set of simplices and $\V^\si$ is the set of vertices; see \cite[Definition 4.4.13]{brenner2008mathematical}. 

We fix more notations for later use. We denote by $|K|$ the volume of a simplex $K$ in $\TT^\si$. Let $\TT^\si_v$ be the set of simplices that contains $v$ as a vertex, and $\V^\si_K$ be the set of vertices of a simplex $K$. Moreover, we denote by $\h{\phi}_v$ the nodal basis function associated with the vertex $v$, that is, $\h{\phi}_v$ is piecewise linear (with respect to the triangulation) such that $\h{\phi}_v(v) = 1$ and $\h{\phi}_v(v') = 0$ for any $v' \in \V^\si$ with $v' \neq v$. We also recall the following useful relations:    
\begin{align} \label{eq:userela_tri}
    |T_v| := \int_{\Omega} \h{\phi}_v\, \rd x = \frac{1}{d + 1} \sum_{K \in \TT^\si_v}|K| \q \text{and} \q \sum_{v \in \V^\si} |T_v| = |\Omega|\,.
\end{align}

By the abstract convergent framework in Section \ref{sec:absconver}, it suffices to construct the discrete modules in Definition \ref{def:build_block} for the discretization of \eqref{eq:distance}. 


\medskip 
\noindent \textbf{Discrete spaces.} 
We define the discrete space $\ol{{\rm X}}_\sigma := \{(R_v)_{v \in \V^\si}\,;\  R_v \in \MM^n\}$ equipped with the inner product $\l \dd,\dd \r_{\V^\si}$ and the discrete $L^1$-norm $\norm{\dd}_{1,\V^\si}$: 
for $A , B \in \ol{{\rm X}}_\si$, 
\begin{equation} \label{defnorminner}
    \l A , B \r_{\V^\si}  = \sum_{v \in \V^\si} |T_v| A_v \dd B_v \,,\q  \norm{A}_{1,\V^\si}  = \sum_{v \in \V^\si} \norm{A_v}_\ff \, |T_v|\,,
\end{equation} 
while the discrete space for $\M(\Omega,\R^{n \t k})$ is given by $  {\rm Y}_\sigma := \{(q_K)_{K \in \TT^\si}\,;\  q_K \in \R^{n \t k}\}$ with the inner product $\l \dd,\dd\r_{\TT^\si}$ and norm $\norm{\dd}_{1,\TT^\si}$: for $p, q \in {\rm Y}_\si$, 
\begin{align*}
    \l p , q \r_{\TT^\si} := \sum_{K \in \TT^\si} |K| p_K \dd q_K\,, \q \norm{p}_{1,\TT^\si} := \sum_{K \in \TT^\si} \norm{p_K}_\ff \, |K| \,.
\end{align*}
The dual norm on $\ol{{\rm X}}_\si$ defined as in \eqref{def:dualnorm_dis} can be computed as: for $B \in \ol{{\rm X}}_\si$,
    \begin{align}\label{def:dualnorm_X}
        \norm{B}_{*,\V^\si} := \sup_{\norm{A}_{1,\V^\si} \le 1} |\l A , B \r_{\V^\si}| 
         = \max_{v\in \V^\si} \norm{B_v}_\ff\,.
    \end{align}
Similarly, the dual norm on ${\rm Y}_\si$ is given by $ \norm{q}_{*,\TT^\si} = \max_{K \in \TT^\si} \norm{q_K}_\ff$. Moreover, the subspace ${\rm X}_\si$ and the cone ${\rm X}_{\si,+}$ of $\ol{{\rm X}}_\si$ are defined by 
\begin{equation*}
    {\rm X}_{\sigma,+} := \{(G_v)_{v \in \V^\si} \,;\  G_v \in \S^n_+\} \subset  {\rm X}_\sigma := \{(S_v)_{v \in \V^\si}\,;\  S_v \in \S^n\}\,.
\end{equation*}

\medskip 
\noindent \textbf{Reconstructions.}
We proceed to introduce the reconstruction operators $\h{\rr}_{\ol{\rm X}_\si}$ on $\ol{{\rm X}}_\si$ and $\h{\rr}_{{\rm Y}_\si}$ on ${\rm Y}_\si$ by  
     \begin{align}\label{eq:repxr}
         \h{\rr}_{\ol{{\rm X}}_\si}(R) =  \sum_{v \in \V^\si} R_v |T_v| \d_{v}\,, \  \forall R \in \ol{{\rm X}}_\si \,,\q \h{\rr}_{{\rm Y}_\si}(q) = \sum_{K \in \TT^\si} q_K \chi_K\,, \ \forall q \in {\rm Y}_\si \,,
    \end{align} 
where $\chi_K$ is the indicator function \eqref{def:indicator}
of $K \in \TT^\si$ identified with the measure $\chi_K\, \rd x$, and $\d_v$ is the Dirac measure supported at the vertex $v$. It is easy to see that
\begin{equation*}
    \norm{\h{\rr}_{\ol{\rm X}_\si}(R)}_{\rm TV} = \norm{R}_{1,\V^\si} \q \text{and} \q \norm{\h{\rr}_{{\rm Y}_\si}(q)}_{\rm TV} = \norm{q}_{1,\TT^\si}\,,
\end{equation*}
which implies that $\h{\rr}_{\ol{{\rm X}}_\si}$ and $\h{\rr}_{{\rm Y}_\si}$ are quasi-isometric by Definition \ref{def:space_reconst}. We also define a piecewise linear reconstruction $\rr^l_{\ol{{\rm X}}_\si} \in \big[\h{\rr}_{\ol{{\rm X}}_\si}\big]$ on $\ol{{\rm X}}_\si$, 
motivated by the nodal interpolation in finite elements,
\begin{align*}
    \rr^l_{\ol{{\rm X}}_\si}(R) = \sum_{v \in \V^\si} R_v \h{\phi}_v \,, \q \forall R \in \ol{{\rm X}}_\si\,.
\end{align*}
Noting $\big(\h{\rr}^*_{\ol{{\rm X}}_\si} \Phi\big)_v  = \Phi(v)$ in \eqref{def:sampling}, the equivalence between $\rr^l_{\ol{{\rm X}}_\si}$ and $\h{\rr}_{\ol{{\rm X}}_\si}$ follows from a Taylor expansion:
\begin{equation} \label{auxerr_3}
\big(\big(\rr^l_{\ol{{\rm X}}_\si}\big)^* \Phi \big)_v =  \frac{1}{|T_v|} \int_{\Omega} \Phi \, \h{\phi}_v\, \rd x = \big(\h{\rr}^*_{\ol{{\rm X}}_\si} \Phi\big)_v  +  O(\norm{\na\Phi}_{\infty} \sigma) \q \text{for}\ \Phi \in C^1(\Omega,\MM^n)\,.
\end{equation}
It is also clear that $ \rr^l_{\ol{{\rm X}}_\si}(R)$ has $H^1$-regularity, and we have the norm estimate:
\begin{equation} \label{auxeq:tvest}
\mb{\norm{\rr^l_{\ol{{\rm X}}_\si} (R)}_{\rm TV}} \le  \sum_{v \in \V^\si} \norm{R_v}_{\rm F}\,|T_v| = \norm{R}_{1,\V^\si}\,,
\end{equation}
where again we identify \mb{$\rr^l_{\ol{{\rm X}}_\si} (R)$} with a measure. 

\medskip 
\noindent \textbf{Discrete derivation.} 
We next introduce the discrete operator $\ms{D}^*_\si: {\rm X}_\si \to {\rm Y}_\si$ by 
\begin{align} \label{defsdd}
        \sd^* G  = \h{\rr}^*_{{\rm Y}_\si}\ms{D}^* \rr_{\ol{{\rm X}}_\si}^l (G)  \q \text{for}\ G \in {\rm X}_\si \,,
\end{align} 
noting that $\ms{D}^* \rr_{\ol{{\rm X}}_\si}^l (G)$ is well-defined by the $H^1$-regularity of $\rr^l_{\ol{{\rm X}}_\si}(G)$. The desired discrete derivation $\ms{D}_\si$ is defined via the duality \eqref{def:adj_disderiv} with $\sd^*$ in \eqref{defsdd}: for $G \in {\rm X}_\si$, $q \in {\rm Y}_\si$, 
\begin{align} \label{def:dis_deriv}
     \l G,  \sd q \r_{\V^\si} := - \l \sd^* G,  q \r_{\TT^\si} = -   \sum_{K \in \TT^\si} \sum_{v \in \V_K^\si} |K| q_K \dd  \fint_K  \ms{D}^* \big(G_v \h{\phi}_v \big)\, \rd x \,.
\end{align}

\medskip 
\noindent \textbf{Discrete action.} 
We finally introduce the discrete functional $\jj_\si$ by,  for $\mu^\si = (G^\si, q^\si, R^\si) \in {\rm X}_\si \t {\rm Y}_\si \t \ol{{\rm X}}_\si$,
 \begin{align} \label{eq:discacf_1}
        \jj_\si(\mu^\si) :=  \frac{1}{2} \sum_{K \in \TT^\si} |K| q^\si_K \dd  \Big(\frac{1}{d+1}\sum_{v \in \V_K^\si} G_v^\si \Big)^\dag q^\si_K + \frac{1}{2} \sum_{v \in \V^\si} |T_v| R^\si_v \dd (G^\si_v)^{\dag} R^\si_v\,,
\end{align} 
which is proper lower semicontinuous and convex by \cite[Proposition 3.1]{li2020general1}. 

To finish the construction of the numerical scheme, we define the sampling operators in Definition \ref{def:cons_sampl} as the adjoint of the reconstructions: for $\Phi \in C(\Omega,\MM^n)$ and $\Psi \in C(\Omega,\R^{n \t k})$,
\begin{equation} \label{def:sampling}
 \ss_{\ol{{\rm X}}_\si} \Phi := (\rr_{\ol{{\rm X}}_\si}^l)^* \Phi = \Big(\frac{1}{|T_v|}\int_\Omega \Phi\,\h{\phi}_v\, \rd x \Big)_{v \in \V^\si}\,, \q \ss_{{\rm Y}_\si} \Psi := \rr_{{\rm Y}_\si}^* \Psi = \Big(\fint_K \Psi \, \rd x\Big)_{K \in \TT^\si}\,.
\end{equation}
The main reason we define $\ss_{\ol{{\rm X}}_\si}$ as $(\rr_{\ol{{\rm X}}_\si}^l)^*$ instead of $\rr_{\ol{{\rm X}}_\si}^*$ is that $(\rr_{\ol{{\rm X}}_\si}^l)^*$ is also well-defined for a measure \mb{(not only for a continuous function)}. We hence define the discrete initial and final distributions by $\ms{G}_0^\si = \ss_{\ol{{\rm X}}_\si} \ms{G}_0$ and $\ms{G}_1^\si = \ss_{\ol{{\rm X}}_\si} \ms{G}_1$, respectively. 
Then, a fully-discrete scheme for \eqref{eq:distance} is readily given by \eqref{prob:discre_ot} in Definition \ref{def:disc_transp}. 
The main result of this section is as follows. 

\begin{theorem} \label{thmconcrete}
Suppose that the assumptions \eqref{1}--\eqref{3} hold. The discrete model \eqref{prob:discre_ot} constructed above admits a minimizer $\mu^\ts_*$, and there exists a minimizer $\mu_*$ to \eqref{eq:distance} such that
    for any $ \rr_\ts \in \rr_\ts^{\rm ad}$ in \eqref{def:addtsr}, up to a subsequence, $\rr_\ts(\mu_*^\ts)$ 
     weak* converges to $\mu_*$ with
    \begin{align*}
        \lim_{(\ts) \to 0} \jj_{\tau,\sigma}(\mu^\ts_*) =  \jj_Q(\mu_*)\,.
    \end{align*} 
\end{theorem}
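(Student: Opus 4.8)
The plan is to show that the concrete objects $({\rm X}_\si,{\rm Y}_\si,\ol{{\rm X}}_\si)$, $\sd$, $\jj_\si$, the reconstructions $\h{\rr}_{\ol{{\rm X}}_\si},\h{\rr}_{{\rm Y}_\si},\rr^l_{\ol{{\rm X}}_\si}$ and the samplings $\ss_{\ol{{\rm X}}_\si},\ss_{{\rm Y}_\si}$ constructed above satisfy all hypotheses of Theorem~\ref{thm:main_conver}; the conclusion then follows verbatim. Since \eqref{1} and \eqref{2} belong to the standing assumptions \eqref{1}--\eqref{3}, four things remain: (a) \eqref{prob:discre_ot} admits a minimizer for each $(\ts)$; (b) $\big[\h{\rr}_{\ol{{\rm X}}_\si}\big]$ and $\big[\h{\rr}_{{\rm Y}_\si}\big]$ are consistent in the sense of Definition~\ref{def:cons_recons}; (c) $\ss_{\ol{{\rm X}}_\si},\ss_{{\rm Y}_\si}$ are consistent in the sense of Definition~\ref{def:cons_sampl}; and (d) assumption~\eqref{asm:end_Point} holds --- this last step being the one that actually uses \eqref{3}. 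For (a), by Remark~\ref{rem:existence} it suffices to exhibit one $\mu^\ts\in\ce_\ts([0,1];\ms{G}_0^\si,\ms{G}_1^\si)$ with $\jj_\ts(\mu^\ts)<+\infty$; one builds such a curve with $q\equiv 0$ by first perturbing $\ms{G}_i^\si$ to $\ms{G}_i^\si+\delta I$ along the commuting profile $t\mapsto\big((1-t)\sqrt{\ms{G}_i^\si}+t\sqrt{\ms{G}_i^\si+\delta I}\,\big)^2$, then interpolating the resulting positive definite vectors by the vertex-wise Bures/Hellinger geodesic, and finally relaxing $\ms{G}_1^\si+\delta I$ back to $\ms{G}_1^\si$; the Hellinger-geodesic profile ($t\mapsto t^2$ or $(1-t)^2$ on the degenerate directions) keeps each $R_k\dd\bar G_k^\dag R_k$ bounded, so the discrete action of each piece is finite, which is all that is needed here.

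For (b) I would first compute $\jj_\si^*$ explicitly: conjugating \eqref{eq:discacf_1} in $(q^\si,R^\si)$ block-wise (cf.\ \cite[Proposition 3.1]{li2020general1}) gives, for $G^\si$ with all vertex values in $\S^n_+$,
$$\jj_\si^*(G^\si,u^\si,W^\si)=\tfrac12\sum_{K\in\TT^\si}|K|\,u^\si_K\dd\Big(\tfrac1{d+1}\sum_{v\in\V_K^\si}G^\si_v\Big)u^\si_K+\tfrac12\sum_{v\in\V^\si}|T_v|\,W^\si_v\dd G^\si_v W^\si_v,$$
and $\jj_\si^*\equiv-\infty$ otherwise (so \eqref{eq:bdd_adjoint} is automatic there). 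Since $(\h{\rr}^*_{{\rm Y}_\si}u)_K=\fint_K u$ and $(\h{\rr}^*_{\ol{\rm X}_\si}W)_v=W(v)$, estimate \eqref{eq:bdd_adjoint} follows from $\big\lVert\tfrac1{d+1}\sum_{v\in\V^\si_K}G^\si_v\big\rVert_\ff\le\tfrac1{d+1}\sum_{v\in\V^\si_K}\norm{G^\si_v}_\ff$ and the identities \eqref{eq:userela_tri}, which yield $\sum_K|K|\,\lVert\bar G_K\rVert_\ff\le\norm{G^\si}_{1,\V^\si}$. For \eqref{est:conjaction} one takes $\w{\rr}_{{\rm X}_\si}=\rr^l_{\ol{{\rm X}}_\si}$, $\rr_{{\rm Y}_\si}=\h{\rr}_{{\rm Y}_\si}$, $\rr_{\ol{\rm X}_\si}=\h{\rr}_{\ol{\rm X}_\si}$ and compares the displayed quadratic with $\jj_\Omega^*\big(\rr^l_{\ol{{\rm X}}_\si}G^\si,u,W\big)=\tfrac12\int_\Omega\big(u\dd(g\,u)+W\dd(g\,W)\big)\,\rd x$, $g=\sum_v G^\si_v\h{\phi}_v$; using $\int_K\h{\phi}_v=|K|/(d+1)$ and a first-order Taylor expansion of the $C^1$-field $(u,W)$ around each vertex (resp.\ each centroid), the two agree up to $\ep_\si C_E\norm{G^\si}_{1,\ms{X}_\si}$ with $\ep_\si\sim\si$. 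Finally \eqref{eq:conserr_adjD} holds with $\rr_{{\rm Y}_\si}=\h{\rr}_{{\rm Y}_\si}$: from \eqref{def:adj_disderiv}, \eqref{defsdd}, \eqref{def:dis_deriv} and the fact that $\rr^l_{\ol{{\rm X}}_\si}$ sends the vector of nodal values $(\phi(v))_{v\in\V^\si}$ to the interpolant $I\phi$, its left-hand side equals $\big|\sum_K|K|\,q^\si_K\dd\fint_K\ms{D}^*(\phi-I_K\phi)\,\rd x\big|$, and $\norm{\ms{D}^*(\phi-I_K\phi)}_{\infty,K}\lesssim\si\norm{\phi}_{2,\infty}$ by the standard finite-element interpolation estimate together with quasi-uniformity.

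For (c), the key point in \eqref{eq:conserr_D} is that, pairing the commutator against any $G^\si\in{\rm X}_\si$ and unwinding \eqref{defsdd} and \eqref{def:sampling},
$$\big\l\sd\ss_{{\rm Y}_\si}\phi-\ss_{\ol{{\rm X}}_\si}\ms{D}\phi,\,G^\si\big\r_{\V^\si}=-\int_\Omega(\bar\phi-\phi)\dd\ms{D}^*\big(\rr^l_{\ol{{\rm X}}_\si}G^\si\big)\,\rd x,\qquad\bar\phi:=\sum_K\Big(\fint_K\phi\Big)\chi_K,$$
where the boundary term from integration by parts drops out thanks to the assumed condition $\widehat{\ms{D}_1}(-i\n)\phi=0$ on $\p\Omega$; since the degree-one part of $\ms{D}^*(\rr^l_{\ol{{\rm X}}_\si}G^\si)$ is piecewise constant, it integrates to zero against the mean-zero function $\bar\phi-\phi$ on each simplex, while the degree-zero part contributes only $O(\si\norm{\phi}_{1,\infty})\norm{G^\si}_{1,\V^\si}$, and taking the supremum over $\norm{G^\si}_{1,\ms{X}_\si}\le1$ gives \eqref{eq:conserr_D}. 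Condition \eqref{eq:consis_energy} is then routine: since $G\succeq\eta I$ the pseudoinverse is the genuine inverse and is smooth, so sampling the smooth field $(G,q,R)$ and using $(A+H)^{-1}=A^{-1}-A^{-1}HA^{-1}+O(\norm{H}^2)$ shows $\jj_\si$ at the sampled values equals $\jj_\Omega(\mu)+O(\si)$, the correction ${\rm e}_{\si,q}$ being absorbed since it is $O(\si)$ in $\norm{\dd}_{*,\ms{X}_\si}$ by \eqref{eq:conserr_D} and $\norm{G_v^\dag}\le 1/\eta$.

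The main obstacle is (d), i.e.\ \eqref{asm:end_Point}, which is where \eqref{3} enters essentially. Given $\mu\in\ce_\infty([0,1];\ms{G}_0,\ms{G}_1)$, its $\ep$-regularization $\mu^\ep$ from Proposition~\ref{prop:regular_ep} has endpoints with \emph{smooth, uniformly positive definite} densities $G_i^\ep\succeq c(\ep)I$; one cannot connect the (possibly rank-deficient) vertex data $\ms{G}_i^\si=\ss_{\ol{{\rm X}}_\si}\ms{G}_i$ to $\ss_{\ol{{\rm X}}_\si}G_i^\ep$ by the direct time-discretized Bures geodesic, whose action diverges like $\tau^{-1}$ near a degenerate endpoint because of the cross term coupling $\ker\ms{G}_{i,v}$ to its complement. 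Instead I would take a two-phase curve with $q\equiv 0$: on $[0,\tfrac12]$ grow $\ms{G}_i^\si$ to $\ms{G}_i^\si+\delta I$ along the commuting Hellinger geodesic (admissible, with discrete action $\le C n|\Omega|\delta$ uniformly in $(\ts)$, since on the degenerate directions $\bar G_k\succeq\tfrac{\tau^2\delta}2 I$ exactly cancels the $\tau^{-2}$ in $R_k\dd\bar G_k^\dag R_k$), and on $[\tfrac12,1]$ run the vertex-wise curve realizing \eqref{eq:basic_bound} from $\ms{G}_i^\si+\delta I$ (now $\succeq\delta I$) to $\ss_{\ol{{\rm X}}_\si}G_i^\ep$, which stays uniformly positive definite for $\ep,\delta$ fixed, so its discrete action converges as $(\ts)\to0$ to a quantity $\le C\sum_v|T_v|\big\lVert\sqrt{(\ms{G}_i^\si+\delta I)_v}-\sqrt{(\ss_{\ol{{\rm X}}_\si}G_i^\ep)_v}\big\rVert_\ff^2$; by the Powers--St{\o}rmer inequality \eqref{est:lip_squre} together with $\big\lVert\ss_{\ol{{\rm X}}_\si}(\ms{G}_i-\ms{G}_i^\ep)\big\rVert_{1,\V^\si}\le\norm{\ms{G}_i-\ms{G}_i^\ep}_{\rm TV}$, this is $\lesssim\delta+\norm{\ms{G}_i-\ms{G}_i^\ep}_{\rm TV}$, uniformly in $\si$. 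Gluing the two phases and choosing $\delta=\delta(\ep)\to0$ yields $\ww_{\ep,i}=\limsup_{(\ts)\to0}\jj_\ts(\mu_i^\ts)\lesssim\delta(\ep)+\norm{\ms{G}_i-\ms{G}_i^\ep}_{\rm TV}\to0$ as $\ep\to0$ by \eqref{est:conver_ini} --- and \eqref{est:conver_ini} is precisely where absolute continuity \eqref{3} is invoked. With (a)--(d) verified, Theorem~\ref{thm:main_conver} applies and completes the proof.
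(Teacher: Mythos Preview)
Your reduction to Theorem~\ref{thm:main_conver} and the verification of (b) and (c) are essentially the paper's Lemma~\ref{lemconsistverfy}; the computations you sketch (explicit $\jj_\si^*$, nodal-interpolation error for \eqref{eq:conserr_adjD}, the integration-by-parts cancellation for \eqref{eq:conserr_D}, and the $(A+H)^{-1}$ expansion for \eqref{eq:consis_energy}) match the paper line by line, the only cosmetic difference being that for \eqref{est:conjaction} you take $\w{\rr}_{{\rm X}_\si}=\rr^l_{\ol{{\rm X}}_\si}$ while the paper uses $\h{\rr}_{\ol{{\rm X}}_\si}$; both choices give the same $O(\si)$ error.

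Where you genuinely diverge is in (a) and (d), and the difference is instructive. Your motivating claim---that the direct time-discretized square-root interpolation between rank-deficient endpoints has action blowing up like $\tau^{-1}$ because of cross terms---is actually false, and the paper exploits this. In Lemma~\ref{lemm:con_total} the paper takes the naive curve $G_{i,v}^{\ts}=\big((1-i\tau)\sqrt{\ms{G}^\si_{0,v}}+i\tau\sqrt{\ms{G}^\si_{1,v}}\big)^2$ with the induced $R^{\ts}_{i,v}$ and $q\equiv 0$, and shows via the range identities $\ran\big(\tfrac12(G^{\ts}_{i,v}+G^{\ts}_{i-1,v})\big)=(\ker\ms{G}^\si_{0,v}\cap\ker\ms{G}^\si_{1,v})^\perp\supset\ran(R^{\ts}_{i,v})$ that the pseudoinverse terms stay bounded; the action is then dominated \emph{directly} by the discrete Hellinger quantity $2\sum_v|T_v|\,\lVert\sqrt{\ms{G}^\si_{1,v}}-\sqrt{\ms{G}^\si_{0,v}}\rVert_\ff^2$, with no $\tau$-dependence. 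This single estimate simultaneously gives existence of a minimizer and, after Powers--St{\o}rmer \eqref{est:lip_squre} and the bound $\lVert\ss_{\ol{{\rm X}}_\si}(G_i-G_i^\ep)\rVert_{1,\V^\si}\le\norm{G_i-G_i^\ep}_{\rm TV}$, yields \eqref{asm:end_Point} in one line. Your two-phase $\delta$-perturbation route is correct and reaches the same endpoint, but the detour is unnecessary; the paper's range analysis is the cleaner idea and worth internalizing, since it shows the pseudoinverse in $J_\Lad$ is harmless along these particular curves even without strict positivity.
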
    

We remark that the assumptions \eqref{1} and \eqref{2} are included in Theorem \ref{thm:upper_abs}, while \eqref{3} is needed for the verification of \eqref{asm:end_Point}. 
The proof of Theorem \ref{thmconcrete} follows from Theorem \ref{thm:upper_abs} and the following two lemmas. 

\begin{lemma}\label{lemconsistverfy}
The spatial reconstructions $\big[\h{\rr}_{\ol{{\rm X}}_\si}\big]$ and $\big[\h{\rr}_{{\rm Y}_\si}\big]$ with \eqref{eq:repxr}, defined as in Definition \ref{def:build_block},  are consistent
in the sense of Definition \ref{def:cons_recons}. In addition, the sampling operators $\ss_{\ol{{\rm X}}_\si}$ and $\ss_{{\rm Y}_\si}$ defined in \eqref{def:sampling} are consistent in the sense of Definition \ref{def:cons_sampl}. 
\end{lemma}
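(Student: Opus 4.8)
The plan is to check, one by one, the four consistency estimates requested: \eqref{eq:conserr_adjD}, \eqref{eq:bdd_adjoint} and \eqref{est:conjaction} from Definition \ref{def:cons_recons}, and \eqref{eq:conserr_D} and \eqref{eq:consis_energy} from Definition \ref{def:cons_sampl}, by unwinding the explicit $P_1$-type modules of Section \ref{sec:conver_example} and combining elementwise interpolation estimates with the mesh identities \eqref{eq:userela_tri}. The one preliminary computation I would carry out first is the closed form of the conjugate $\jj_\si^*$: since $\jj_\si$ in \eqref{eq:discacf_1} is a sum of matrix-fractional functions that decouples over the $q$- and $R$-blocks, blockwise Legendre transformation gives, for $G^\si\in{\rm X}_{\si,+}$,
\[
\jj_\si^*(G^\si,u^\si,W^\si)=\tfrac12\sum_{K\in\TT^\si}|K|\,u^\si_K\dd\Big(\tfrac1{d+1}\!\!\sum_{v\in\V_K^\si}\!\!G^\si_v\Big)u^\si_K+\tfrac12\sum_{v\in\V^\si}|T_v|\,W^\si_v\dd G^\si_v W^\si_v,
\]
with $\jj_\si^*=-\infty$ when $G^\si\notin{\rm X}_{\si,+}$ (so every bound below is vacuous in that case). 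I will also use the adjoint identities $(\h\rr_{\ol{\rm X}_\si}^*\Phi)_v=\Phi(v)$, $(\ss_{\ol{\rm X}_\si}\Phi)_v=\frac1{|T_v|}\int_\Omega\Phi\,\h\phi_v\,\rd x$, $(\ss_{{\rm Y}_\si}\psi)_K=\fint_K\psi$, the partition-of-unity relations $\sum_v\h\phi_v\equiv1$, $\int_K\h\phi_v=|K|/(d+1)$, $\int_\Omega\h\phi_v=|T_v|$, and the standard bounds $\norm{\Phi-\Pi^1_\si\Phi}_\infty\lesssim\si^2\norm{\Phi}_{2,\infty}$, $\norm{\na(\Phi-\Pi^1_\si\Phi)}_\infty\lesssim\si\norm{\Phi}_{2,\infty}$, $\norm{\psi-\Pi^0_\si\psi}_\infty\lesssim\si\norm{\psi}_{1,\infty}$, where $\Pi^1_\si\Phi:=\sum_v\Phi(v)\h\phi_v$ is the nodal interpolant and $\Pi^0_\si\psi:=\sum_K(\fint_K\psi)\chi_K$ the cellwise average; all hidden constants depend only on the quasi-uniformity of $(\TT^\si,\V^\si)$ (in particular $\norm{\na\h\phi_v}_\infty\lesssim\si^{-1}$).

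For Definition \ref{def:cons_recons} I would take $\rr_{{\rm Y}_\si}=\h\rr_{{\rm Y}_\si}$, $\rr_{\ol{\rm X}_\si}=\h\rr_{\ol{\rm X}_\si}$ and $\w\rr_{{\rm X}_\si}=\rr^l_{\ol{\rm X}_\si}|_{{\rm X}_\si}$ (which lies in $[\h\rr_{{\rm X}_\si}]$ by the equivalence \eqref{auxerr_3} and the obvious mapping property of $\rr^l_{\ol{\rm X}_\si}$, since $\h\phi_v\ge0$). Using \eqref{def:adj_disderiv}, \eqref{defsdd} and $\rr^l_{\ol{\rm X}_\si}(\h\rr_{\ol{\rm X}_\si}^*\phi)=\Pi^1_\si\phi$, the left side of \eqref{eq:conserr_adjD} collapses to $\langle\h\rr_{{\rm Y}_\si}q^\si,\ms{D}^*(\phi-\Pi^1_\si\phi)\rangle_\Omega$, which is $\lesssim\norm{q^\si}_{1,\TT^\si}\norm{\ms{D}^*(\phi-\Pi^1_\si\phi)}_\infty\lesssim\si\norm{\phi}_{2,\infty}\norm{q^\si}_{1,\TT^\si}$ once the degree-$0$ and degree-$1$ parts of $\ms{D}^*$ (decomposition \eqref{2}) are estimated by $\norm{\phi-\Pi^1_\si\phi}_\infty$ and $\norm{\na(\phi-\Pi^1_\si\phi)}_\infty$. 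For \eqref{eq:bdd_adjoint} I plug the adjoints $\h\rr^*$ into the formula for $\jj_\si^*$ above and bound both blocks by $\norm{\bar G_K}_\ff\le\frac1{d+1}\sum_{v\in\V_K^\si}\norm{G_v}_\ff$, $\norm{G_v}_{\rm op}\le\norm{G_v}_\ff$ and $\sum_K\frac{|K|}{d+1}\sum_{v\in\V_K^\si}(\dd)=\sum_v|T_v|(\dd)$, obtaining $\tfrac12(\norm{u}_\infty^2+\norm{W}_\infty^2)\norm{G^\si}_{1,\V^\si}$. For \eqref{est:conjaction}, with this choice $\jj_\Omega^*(\w\rr_{{\rm X}_\si}G^\si,u,W)=\tfrac12\sum_v\int_\Omega(u\dd G_v u+W\dd G_v W)\h\phi_v\,\rd x$, and I compare it block-by-block with $\jj_\si^*(G^\si,\ss_{{\rm Y}_\si}u,\h\rr_{\ol{\rm X}_\si}^*W)$: on each $K$ one has $|\tfrac{|K|}{d+1}(\fint_K u)\dd G_v(\fint_K u)-\int_K u\dd G_v u\,\h\phi_v\,\rd x|\lesssim\si C_E\norm{G_v}_\ff\,|K|/(d+1)$ (and the analogous vertex estimate for the $W$-block) because $(u,W)$ ranges over a $C^1$-bounded set; summing through $\sum_{K\ni v}|K|/(d+1)=|T_v|$ gives the error $\ep_\si C_E\norm{G^\si}_{1,\V^\si}$ with $\ep_\si\asymp\si$.

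For Definition \ref{def:cons_sampl}, testing $\sd\ss_{{\rm Y}_\si}\phi-\ss_{\ol{\rm X}_\si}\ms{D}\phi$ against an arbitrary $M^\si\in{\rm X}_\si$ and invoking \eqref{def:adj_disderiv}, \eqref{defsdd}, $\ss_{\ol{\rm X}_\si}=(\rr^l_{\ol{\rm X}_\si})^*$ and an element-by-element integration by parts — whose interior face terms cancel because $\rr^l_{\ol{\rm X}_\si}M^\si$ is globally continuous, and whose boundary term vanishes because $\h{\ms{D}_1}(-i\nu)\phi=0$ on $\p\Omega$ — yields $\langle M^\si,\sd\ss_{{\rm Y}_\si}\phi-\ss_{\ol{\rm X}_\si}\ms{D}\phi\rangle_{\V^\si}=-\int_\Omega\ms{D}^*(\rr^l_{\ol{\rm X}_\si}M^\si)\dd(\Pi^0_\si\phi-\phi)\,\rd x$. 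The key observation is that $\rr^l_{\ol{\rm X}_\si}M^\si$ is affine on each $K$, so its image under the degree-$1$ part of $\ms{D}^*$ is constant there and integrates to zero against the cell-mean-zero field $\Pi^0_\si\phi-\phi$; only the degree-$0$ part survives, and since $\norm{\ms{D}_0^*(\rr^l_{\ol{\rm X}_\si}M^\si)}_{L^\infty(K)}\lesssim\sum_{v\in\V_K^\si}\norm{M_v}_\ff$ carries no negative power of $\si$, summation against $\norm{\Pi^0_\si\phi-\phi}_\infty\lesssim\si\norm{\phi}_{1,\infty}$ gives $|\langle M^\si,\dd\rangle_{\V^\si}|\lesssim\si\norm{\phi}_{2,\infty}\norm{M^\si}_{1,\V^\si}$, i.e.\ \eqref{eq:conserr_D}. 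For \eqref{eq:consis_energy} I substitute $(\ss_{\ol{\rm X}_\si}G,\ss_{{\rm Y}_\si}q,\ss_{\ol{\rm X}_\si}R+{\rm e}_{\si,q})$ into \eqref{eq:discacf_1}; using $G\succeq\eta I$ (hence $(\dd)^\dag=(\dd)^{-1}$ and $(G^\si_v)^{-1}\preceq\eta^{-1}I$), the $C^2$-regularity and $(A+H)^{-1}=A^{-1}+O(\norm{H})$, every cell/vertex average agrees with the corresponding pointwise value up to $O(\si)C_{E,\eta}$, the perturbation ${\rm e}_{\si,q}$ is $O(\si)$ in $\norm{\dd}_{*,\V^\si}$ by \eqref{eq:conserr_D} and is absorbed by a Cauchy--Schwarz estimate, and the leading terms reassemble, through $\int_K\h\phi_v=|K|/(d+1)$ and $\sum_v\h\phi_v\equiv1$, into $\int_\Omega\tfrac12(q\dd G^{-1}q+R\dd G^{-1}R)\,\rd x=\jj_\Omega(\mu)$, which is \eqref{eq:consis_energy} with $\ep_\si\asymp\si$.

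I expect the main obstacle to be \eqref{eq:conserr_D}: the naive bound $\norm{\ms{D}^*(\rr^l_{\ol{\rm X}_\si}M^\si)}_{L^1}\lesssim\si^{-1}\norm{M^\si}_{1,\V^\si}$ against $\norm{\Pi^0_\si\phi-\phi}_\infty\lesssim\si\norm{\phi}_{2,\infty}$ is only $O(1)$, so one has to extract one extra power of $\si$ from the exact cancellation of the cellwise-constant principal part of $\ms{D}^*\rr^l_{\ol{\rm X}_\si}M^\si$ against the zero-mean remainder $\Pi^0_\si\phi-\phi$ on each cell; carrying this out forces the careful element-by-element integration by parts with matching interior and boundary fluxes, which is exactly where the conformity of the mesh, the smoothness of $\p\Omega$, and the boundary condition $\h{\ms{D}_1}(-i\nu)\phi=0$ genuinely enter.
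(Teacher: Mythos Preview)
Your proposal is correct and follows essentially the same approach as the paper's proof: the same collapse of \eqref{eq:conserr_adjD} to $\langle\h\rr_{{\rm Y}_\si}q^\si,\ms{D}^*(\phi-\Pi^1_\si\phi)\rangle_\Omega$, the same explicit formula for $\jj_\si^*$, the same cellwise cancellation of the degree-$1$ part of $\ms{D}^*\rr^l_{\ol{\rm X}_\si}M^\si$ against the mean-zero field $\Pi^0_\si\phi-\phi$ in \eqref{eq:conserr_D}, and the same perturbation argument for \eqref{eq:consis_energy}. The only cosmetic difference is that in \eqref{est:conjaction} you take $\w\rr_{{\rm X}_\si}=\rr^l_{\ol{\rm X}_\si}|_{{\rm X}_\si}$ whereas the paper takes $\w\rr_{{\rm X}_\si}=\h\rr_{\ol{\rm X}_\si}|_{{\rm X}_\si}$; both are admissible and the estimates are equivalent up to an $O(\si)$ shift.
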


\begin{lemma}\label{lemm:con_total}
For any 
$\ms{G}^\si_{0}, \ms{G}^\si_1 \in {\rm X}_{\si,+}$, there exists a discrete curve $\mu^\ts \in \ce_\ts\big([0,1];\ms{G}^\si_0,\ms{G}^\si_1\big)$ satisfying 
\begin{align}  \label{discretepriori}
    \jj_\ts(\mu^\ts) \le 2  \sum_{v \in \V^\sigma} |T_v| \,\Big\lVert\sqrt{\ms{G}^\si_{1,v}} - \sqrt{\ms{G}^\si_{0,v}}\Big \lVert_{\rm F}^2 \,.
\end{align}
It follows that \eqref{prob:discre_ot} admits a minimizer $\mu^\ts_*$ for each $(\ts) \in \Sigma_{\NN}$. Moreover, 
for $\ms{G}_0, \ms{G}_1 \in \mc{M}(\Omega,\S^n_+)$, we have that 
$\ms{G}_0^\si = \ss_{\ol{{\rm X}}_\si} \ms{G}_0$ and $\ms{G}_1^\si = \ss_{\ol{{\rm X}}_\si} \ms{G}_1$ satisfy \eqref{eq:convdis_inifal}, and the assumption \eqref{asm:end_Point} holds if \eqref{3} holds. 
\end{lemma}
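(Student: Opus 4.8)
The plan is to prove Lemma \ref{lemm:con_total} in three parts, corresponding to the three assertions: the \emph{a priori} bound \eqref{discretepriori} (and the existence of a minimizer), the convergence \eqref{eq:convdis_inifal} of the discrete endpoints, and the verification of \eqref{asm:end_Point} under \eqref{3}. For the first part, the strategy mirrors the continuous estimate \eqref{eq:basic_bound}: I would construct an explicit competitor in $\ce_\ts([0,1];\ms{G}_0^\si,\ms{G}_1^\si)$ that uses \emph{only} the reaction term $R^\ts$ and no flux, i.e.\ set $q_k^\ts = 0$ for all $k$ and choose $G_i^\ts$ to be the ``geodesic'' interpolation between $\ms{G}_0^\si$ and $\ms{G}_1^\si$ that is pointwise-in-$v$ optimal for the Bures/Hellinger cost. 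Concretely, at each vertex $v$ one sets $G_{i,v}^\ts = \big((1 - i\tau)\sqrt{\ms{G}_{0,v}^\si} + i\tau \sqrt{\ms{G}_{1,v}^\si}\big)^2$ and defines $R_k^\ts$ from the discrete continuity equation \eqref{eq:discontieq}, so $R_{k,v}^\ts = \tau^{-1}(G_{k,v}^\ts - G_{k-1,v}^\ts)$. Plugging into \eqref{eq:discacf_1} and using $\jj_\si$'s second term only, a direct computation (exactly the matrix-valued version of the scalar Hellinger-geodesic estimate, using that $\frac{d}{dt}(A(t)^2)$ with $A(t)$ affine gives the bound $4\|\sqrt{\ms{G}_1^\si}-\sqrt{\ms{G}_0^\si}\|^2$ per vertex after the $\tfrac12$ and the time integral) yields \eqref{discretepriori}. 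The factor $2$ comes out as in \cite[Lemma 3.9]{li2020general1}. Since this competitor has finite cost and lies in $\ce_\ts$ with $G_i^\ts \in {\rm X}_{\si,+}$ (the square of a symmetric matrix is PSD), Remark \ref{rem:existence} gives existence of $\mu^\ts_*$.

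For the second part, $\ms{G}_i^\si = \ss_{\ol{{\rm X}}_\si}\ms{G}_i = \big(\tfrac{1}{|T_v|}\int_\Omega \h{\phi}_v\, \rd \ms{G}_i\big)_v$, and $\h{\rr}_{{\rm X}_\si}\ms{G}_i^\si = \sum_v \ms{G}_{i,v}^\si |T_v| \d_v$. Testing against $\Phi \in C(\Omega,\S^n)$ gives $\l \h{\rr}_{{\rm X}_\si}\ms{G}_i^\si, \Phi\r_\Omega = \sum_v \Phi(v) \dd \int_\Omega \h{\phi}_v\, \rd\ms{G}_i = \int_\Omega \big(\sum_v \Phi(v)\h{\phi}_v\big) \dd \rd \ms{G}_i$, which is the integral of the nodal interpolant of $\Phi$ against $\ms{G}_i$. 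By uniform continuity of $\Phi$ on the compact $\Omega$ and quasi-uniformity of the triangulation, the nodal interpolant converges uniformly to $\Phi$ as $\si \to 0$, so $\l\h{\rr}_{{\rm X}_\si}\ms{G}_i^\si,\Phi\r_\Omega \to \int_\Omega \Phi\, \rd\ms{G}_i = \l\ms{G}_i,\Phi\r_\Omega$, which is \eqref{eq:convdis_inifal}. (One also checks $\ms{G}_i^\si \in {\rm X}_{\si,+}$ since $\ms{G}_i \in \M(\Omega,\S^n_+)$ and $\h{\phi}_v \ge 0$.)

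The third part — verifying \eqref{asm:end_Point} — is where assumption \eqref{3} enters and is the main obstacle. One must, given $\mu \in \ce_\infty([0,1];\ms{G}_0,\ms{G}_1)$ with $\ep$-regularization $\mu^\ep$ having smooth density $(G^\ep,q^\ep,R^\ep)$ and smooth endpoints $G_i^\ep$, produce discrete curves $\mu_i^\ts \in \ce_\ts([0,1];\ms{G}_i^\si,\ss_{\ol{{\rm X}}_\si}G_i^\ep)$ whose costs vanish as first $(\ts)\to 0$ and then $\ep\to 0$. The idea is to connect $\ms{G}_i^\si = \ss_{\ol{{\rm X}}_\si}\ms{G}_i$ to $\ss_{\ol{{\rm X}}_\si}G_i^\ep$ again by a pure-reaction (flux-free) discrete curve as in part one, giving by \eqref{discretepriori} the bound
\begin{equation*}
\jj_\ts(\mu_i^\ts) \le 2 \sum_{v\in\V^\si} |T_v|\, \Big\lVert \sqrt{(\ss_{\ol{{\rm X}}_\si}\ms{G}_i)_v} - \sqrt{(\ss_{\ol{{\rm X}}_\si} G_i^\ep)_v}\Big\rVert_{\rm F}^2\,.
\end{equation*}
By the Powers--St{\o}rmer inequality \eqref{est:lip_squre}, this is $\lesssim \sqrt{n}\sum_v |T_v|\,\lVert (\ss_{\ol{{\rm X}}_\si}\ms{G}_i)_v - (\ss_{\ol{{\rm X}}_\si}G_i^\ep)_v\rVert_{\rm F}$, and since $\ss_{\ol{{\rm X}}_\si}$ is a bounded averaging operator, $\sum_v |T_v|\,\lVert(\ss_{\ol{{\rm X}}_\si}(\ms{G}_i - G_i^\ep\,\rd x))_v\rVert_{\rm F}$ is controlled by $\norm{\ms{G}_i - G_i^\ep\,\rd x}_{\rm TV} = \norm{\ms{G}_i - \ms{G}_i^\ep}_{\rm TV}$ uniformly in $\si$; hence $\ww_{\ep,i} = \limsup_{(\ts)\to0}\jj_\ts(\mu_i^\ts) \lesssim \sqrt{n}\,\norm{\ms{G}_i - \ms{G}_i^\ep}_{\rm TV}^{1/2}$ up to constants. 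Now \eqref{3} is exactly what makes Proposition \ref{prop:regular_ep}(3) applicable, giving $\norm{\ms{G}_i - \ms{G}_i^\ep}_{\rm TV}\to 0$ as $\ep\to 0$, and therefore $\ww_{\ep,i}\to 0$, which is \eqref{assumpest}. The delicate point to get right is the interchange of the Cauchy--Schwarz/Jensen step with the square root so that the total-variation norm (rather than some non-convergent quantity) appears, and checking that the flux-free competitor indeed satisfies the discrete continuity equation with the prescribed endpoints $\ms{G}_i^\si$ and $\ss_{\ol{{\rm X}}_\si}G_i^\ep$ — but both are straightforward once the bound \eqref{discretepriori} from part one is in hand, applied with the two endpoint families $\ms{G}_i^\si$ and $\ss_{\ol{{\rm X}}_\si}G_i^\ep$ in place of $\ms{G}_0^\si,\ms{G}_1^\si$.
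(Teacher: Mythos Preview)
Your approach is essentially the paper's: a pure-reaction competitor built from the vertexwise square-root interpolation, the nodal-interpolant argument for \eqref{eq:convdis_inifal}, and Powers--St{\o}rmer plus \eqref{est:conver_ini} for \eqref{asm:end_Point}. Two minor corrections. First, in part one you set $R_{k,v}^\ts=\tau^{-1}(G_{k,v}^\ts-G_{k-1,v}^\ts)$, i.e.\ the \emph{symmetric} reaction; the paper instead takes the non-symmetric $R_{k,v}^\ts=2C_k(\sqrt{\ms{G}_{1,v}^\si}-\sqrt{\ms{G}_{0,v}^\si})$ with $C_k=(k-\tfrac12)\tau(\sqrt{\ms{G}_{1,v}^\si}-\sqrt{\ms{G}_{0,v}^\si})+\sqrt{\ms{G}_{0,v}^\si}$, whose symmetric part still satisfies \eqref{eq:discontieq}. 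This matters because the quadratic form $R\cdot G^\dag R$ with $R=2CS$ reduces to $4\tr(S\,C M^\dag C\,S)$, and the range identity $\ran(M)=\ran(C)$ then gives $C M^\dag C\preceq P_{\ran(C)}$, whence the clean bound $4\|S\|_{\rm F}^2$; with the symmetric choice $R=CS+SC$ you pick up a cross term $\tr(C^2 S M^\dag S)$ that is not controlled by $\|S\|_{\rm F}^2$ without further work. Second, in part three your final exponent should be $1$, not $1/2$: Powers--St{\o}rmer already gives $\|\sqrt{A}-\sqrt{B}\|_{\rm F}^2\lesssim\|A-B\|_{\rm F}$, so $\jj_\ts(\mu_i^\ts)\lesssim\|\ms{G}_i-\ms{G}_i^\ep\|_{\rm TV}$ directly.
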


\begin{proof}[Proof of Lemma \ref{lemconsistverfy}]
We start with the consistency \eqref{eq:conserr_adjD} with 
quasi-isometric reconstructions $\h{\rr}_{\ol{\rm X}_\si}$ and $\h{\rr}_{{\rm Y}_\si}$. By \eqref{eq:repxr} and \eqref{def:dis_deriv}, we have, for $q^\si \in {\rm Y}_\si$ and $\Phi \in C^2(\Omega,\S^n)$, 
\begin{align} \label{auxerr_1} 
    &\l \h{\rr}_{{\rm Y}_\si} (q^\si),  \ms{D}^* \Phi \r_{\Omega} + \l \h{\rr}_{\ol{{\rm X}}_\sigma} \left(\sd q^\si \right), \Phi \r_{\Omega} \notag \\ = & \sum_{K \in \TT^\si}|K| q^\si_K \dd \fint_K \ms{D}^* \Phi\, \rd x - \sum_{K \in \TT^\si}\sum_{v \in \V_K^\si} |K| q_K^\si \dd \fint_K \ms{D}^* \big(\Phi(v) \h{\phi}_v \big)\, \rd x\,.
\end{align}
Recall $\ms{D}^* = \ms{D}^*_0 + \ms{D}^*_1$ with
 $\ms{D}^*_0 \in \L(\S^n,\R^{n \t k})$ and $\ms{D}^*$ being a homogeneous first order differential operator. Since $\Phi$ is $C^2$-smooth and $\sum_{v \in \V_K^\si} \h{\phi}_v|_K = 1$ on $K$, a simple calculation by Taylor expansion gives
\begin{align*}
  \Big \lVert\fint_K \ms{D}_0^*\, \Phi\, \rd x - \fint_K \sum_{v \in \V_K^\si}   \ms{D}_0^* \big(\Phi(v) \h{\phi}_v \big)\, \rd x \Big  \lVert_\ff \lesssim  \Big  \lVert \fint_K \Phi\, \rd x - \fint_K \sum_{v \in \V_K^\si}   \big(\Phi(v) \h{\phi}_v \big)\, \rd x \Big  \lVert_\ff \lesssim \norm{\na \Phi}_\infty \si\,.
\end{align*}
On the other hand,
we estimate, by the approximation property of linear elements \cite[Theorem 4.4.20]{brenner2008mathematical},
\begin{align*}
     \Big \lVert \fint_K \ms{D}_1^*\, \Phi\, \rd x -  \fint_K \sum_{v \in \V_K^\si} \ms{D}_1^* \big(\Phi(v) \h{\phi}_v \big)\, \rd x \Big \lVert_\ff \lesssim \norm{\Phi}_{2,\infty} \si\,.
\end{align*}
The desired \eqref{eq:conserr_adjD} follows from \eqref{auxerr_1} and above estimates. 

Next, we consider the consistency \eqref{eq:conserr_D}. Recalling the sampling operator \eqref{def:sampling} and the discrete operator \eqref{def:dis_deriv}, we find, for $A^\si \in {\rm X}_\si$ and $\vp \in C^2(\Omega,\R^{n \t k})$ with $\widehat{\ms{D}_1}(-i \nu)\,\vp = 0$, 
\begin{align} \label{auxerr_2}
  \l A^\si, - \sd \ss_{{\rm Y}_\sigma} \vp + \ss_{\ol{{\rm X}}_\si} \mathsf{D} \vp \r_{\V^\si} 
  & = \big\l \h{\rr}_{\ol{\rm X}_\si}^l A^\si, \mathsf{D}\vp \big\r_{\Omega} + \l \sd ^* A^\si, \h{\rr}^*_{{\rm Y}_\sigma} \vp \big\r_{\TT^\si}\,.
\end{align}
 Noting that $\vp$ satisfies the boundary condition $\widehat{\ms{D}_1}(-i \nu)\vp = 0$, we can  calculate  
\begin{align} \label{auxerr_6}
  \big\l \h{\rr}_{\ol{\rm X}_\si}^l A^\si, \mathsf{D}\vp \big\r_{\Omega} + \l \sd ^* A^\si, \h{\rr}^*_{{\rm Y}_\sigma} \vp \big\r_{\TT^\si} = - \big\l \mathsf{D}^* \rr^l_{\ol{\rm X}_\si} (A^\si), \vp \big\r_{\Omega} + \big\l \h{\rr}_{{\rm Y}_\si}^* \ms{D}^* \rr^l_{\ol{\rm X}_\si} (A^\si), \h{\rr}^*_{{\rm Y}_\sigma}\vp \big\r_{\V_\si}\,,
\end{align}
by the integration by parts and \eqref{def:dis_deriv}. By decomposing $\ms{D}^* = \ms{D}^*_0 + \ms{D}^*_1 $ as above, we have 
\begin{equation} \label{auxerr_4}
    - \big\l \mathsf{D}_1^*\, \rr^l_{\ol{\rm X}_\si} (A^\si), \vp \big\r_{\Omega} + \big\l \h{\rr}_{{\rm Y}_\si}^* \ms{D}_1^* \,\rr^l_{\ol{\rm X}_\si} (A^\si), \h{\rr}^*_{{\rm Y}_\sigma}\vp \big\r_{\V_\si}  = 0\,, 
\end{equation}
since  $\ms{D}^*_1\rr_{\ol{\rm X}_\si}^l (A^\si)$ is a piecewise constant function.  Meanwhile, by \eqref{auxeq:tvest}, a direct estimate yields 
\begin{align} \label{auxerr_5}
   \, & \big|- \big\l\, \mathsf{D}_0^*\, \rr^l_{\ol{\rm X}_\si} (A^\si), \vp \big\r_{\Omega} + \big\l \h{\rr}_{{\rm Y}_\si}^* \ms{D}_0^* \, \rr^l_{\ol{\rm X}_\si} (A^\si), \h{\rr}^*_{{\rm Y}_\sigma} \vp  \big\r_{\V_\si} \big| \notag \\
    \le & \sum_{K \in \TT^\si} \int_K    \Big| \rr^l_{\ol{\rm X}_\si} (A^\si) \dd \ms{D}_0 \Big(\vp - \fint_K \vp \,\rd x\Big) \Big|\, \rd x \notag\\
    \lesssim &\  \si  \norm{\na \vp}_{\infty}  \big \lVert \rr^l_{\ol{\rm X}_\si} A^\si \big\lVert_{\rm TV}
     \lesssim \si \norm{\na \vp}_{\infty}  \norm{A^\si}_{1,\V^\si}\,. 
\end{align}
Combining the estimates \eqref{auxerr_4} and \eqref{auxerr_5} with \eqref{auxerr_2} and \eqref{auxerr_6}, then the condition \eqref{eq:conserr_D} follows.

We now consider the estimates  \eqref{eq:bdd_adjoint} and \eqref{est:conjaction} for the conjugate functional $\jj_\si^*$ with 
 $\h{\rr}_{\ol{\rm X}_\si}$ and $\h{\rr}_{{\rm Y}_\si}$. For this, we first compute $\jj_\si^*$ by definitions \eqref{def:conjugate} and  \eqref{eq:discacf_1}: for $(G^\si,u^\si, W^\si) \in {\rm X}_\si \t {\rm Y}_\si \t \ol{{\rm X}}_\si$,  
\begin{align*}
      \jj_\si^*(G^\si, u^\si, W^\si) = \frac{1}{2} \sum_{K \in \TT^\si} |K| u^\si_K \dd  \Big(\frac{1}{d+1} \sum_{v \in \V_K^\si} G_v^\si \Big)\, u^\si_K + \frac{1}{2} \sum_{v \in \V^\si} |T_v| W^\si_v \dd G^\si_v W^\si_v \,.
\end{align*}
To estimate, for $G^\si \in {\rm X}_\si$ and $(u,W) \in C(\Omega, \R^{n \t k} \t \mathbb{M}^n)$, 
 \begin{align}
     \jj^*_\si \big(G^\si,\h{\rr}^*_{{\rm Y}_\si} u, \h{\rr}^*_{\ol{{\rm X}}_\si} W \big) & = \frac{1}{2} \sum_{K \in \TT^\si} |K| \fint_K u\, \rd x \dd  \Big(\frac{1}{d+1}\sum_{v \in \V_K^\si} G_v^\si \Big) \fint_K u\, \rd x + \frac{1}{2} \sum_{v \in \V^\si} |T_v| W(v) \dd G_v^\si W(v)\,, \label{eq:jjgyx}
 \end{align}
we note from \eqref{eq:userela_tri} that for some $C > 0$,
 \begin{align*}
    &  \sum_{K \in \TT^\si} |K|\Big( \frac{1}{d+1} \sum_{v \in \V_K^\si} G_v^\si \Big)\dd \Big(\fint_K u\, \rd x \fint_K u^{{\rm T}}\, \rd x \Big)\\
=  & \sum_{v \in \V^\si} \frac{1}{d+1} \sum_{K \in \TT_v^\si} |K|  G_v^\si \dd  \Big(\fint_K u\, \rd x \fint_K u^{{\rm T}}\, \rd x \Big) \\
\le  &  \sum_{v \in \V^\si}  |T_v| G_v^\si \dd \big( u(v)\,u(v)^{{\rm T}}\big) + C \norm{u}^2_{1,\infty} \norm{G^\si}_{1,\V_\si} \si \,,\q \forall u \in C^1(\Omega, \R^{n \t k})\,.
\end{align*}
Then, recalling \eqref{def:jalpha_conj}, it readily follows that, for $C^1$-smooth $(u,W)$,
 \begin{align*}
  \jj^*_\si \Big(G^\si,\h{\rr}^*_{{\rm Y}_\si}u,\h{\rr}^*_{\ol{{\rm X}}_\si} W \Big)  & \le  \frac{1}{2} \sum_{v \in \V^\si} |T_v| G_v^\si \dd \big(u(v)\, u(v)^{\rm T} + W(v)\, W(v)^{\rm T}\big)  + C \norm{u}^2_{1,\infty} \norm{G^\si}_{1,\V_\si} \si \\
  & = \jj_{\Omega}^*\big(\h{\rr}_{\ol{\rm X}_\si}G^\si,u,W\big)  + C \norm{u}^2_{1,\infty} \norm{G^\si}_{1,\V_\si} \si\,,
 \end{align*}
that is, \eqref{est:conjaction} holds. The estimate \eqref{eq:bdd_adjoint} is a simple consequence of the expression \eqref{eq:jjgyx}. 

We finally verify the condition \eqref{eq:consis_energy}. For notational simplicity, we write $\w{\Phi}_v = \ss_{\ol{{\rm X}}_\si}\Phi$ for a continuous function $\Phi$. Then, for smooth 
$\mu = (G,q,R)$ with $G \succeq \eta I$ for some $\eta > 0$, we compute 
\begin{align} \label{auxerr_7}
    &\jj_\si \big(\ss_{\ol{{\rm X}}_\si}(G), \ss_{{\rm Y}_\si}(q), \ss_{\ol{\rm X}_\si}(R) + {\rm e}_{\si,q}\big) \notag\\
=  & \frac{1}{2} \sum_{K \in \TT^\si} |K| \fint_K q\, \rd x \dd  \Big(\frac{1}{d+1}\sum_{v \in \V_K^\si} \w{G}_v \Big)^\dag \fint_K q\, \rd x + \frac{1}{2} \sum_{v \in \V^\si} |T_v| \big(\w{R}_v + {\rm e}_{\si,q}\big) \dd \w{G}_v^{\dag} \big(\w{R}_v + {\rm e}_{\si,q}\big) \notag\\
=  & \frac{1}{2} \sum_{K \in \TT^\si} |K| \Big(\frac{1}{d+1}\sum_{v \in \V_K^\si} \w{G}_v \Big)^\dag \dd \big(q(v)\,q(v)^{{\rm T}} + O(\norm{q}^2_{1,\infty}\si)\big) \notag \\& + \frac{1}{2} \sum_{v \in \V^\si} |T_v| \big(\w{R}_v + {\rm e}_{\si,q}\big) \dd \w{G}_v^{\dag} \big(\w{R}_v + {\rm e}_{\si,q}\big)\,, 
\end{align}
by using $\fint_K q\, \rd x = q(v) + O(\norm{q}_{1,\infty} \si)$. Noting $G \succeq \eta I$ on $Q$ and $G^\dag = G^{-1}$, we have, for small enough $\si > 0$,
\begin{align*}
    \Big(\frac{1}{d+1}\sum_{v \in \V_K^\si} \w{G}_v \Big)^{-1} = \frac{1}{d+1} \sum_{v \in \V_K^\si} G(v)^{-1}  + O(\eta^{-2}\norm{G}_{1,\infty} \si)\,.
\end{align*}
Then, it follows from \eqref{eq:userela_tri} that, for some $C > 0$,
\begin{align}\label{auxerr_8}
    & \frac{1}{2} \sum_{K \in \TT^\si} |K| \Big(\frac{1}{d+1}\sum_{v \in \V_K^\si} \w{G}_v \Big)^\dag \dd \big(q(v)\,q(v)^{{\rm T}} + O(\norm{q}^2_{1,\infty}\si)\big) 
    \notag \\
\le \, & \frac{1}{2} \sum_{v \in \V^\si} |T_v| q(v) \dd G(v)^{-1} q(v) + C \eta^{-2}(\norm{G}_{1,\infty} + 1)\norm{q}_{1,\infty}^2 \si \,.
\end{align}
Similarly, by definition of ${\rm e}_{\si,q}$ and \eqref{eq:conserr_D} verified above, we have,  for small enough $\si > 0$,
\begin{align} \label{auxerr_9}
    \frac{1}{2} \sum_{v \in \V^\si} |T_v| \big(\w{R}_v + {\rm e}_{\si,q}\big) \dd \w{G}_v^{\dag} \big(\w{R}_v + {\rm e}_{\si,q}\big) \le \frac{1}{2} \sum_{v \in \V^\si} |T_v| R(v) \dd G(v)^{\dag} R(v) + C_{\norm{\mu}_{2,\infty},\eta}\, \si \,,
\end{align}
where the generic constant $C_{\norm{\mu}_{2,\infty},\eta}$ depends on $\norm{\mu}_{2,\infty}$ and $\eta$. Then, by
 \eqref{auxerr_8} and \eqref{auxerr_9}, \eqref{auxerr_7} implies 
\begin{align}\label{est:auxconen}
    \jj_\si\big(\ss_{\ol{{\rm X}}_\si}(G), \ss_{{\rm Y}_\si}(q), \ss_{\ol{\rm X}_\si}(R) + {\rm e}_{\si,\mu}\big) \le \frac{1}{2} \sum_{v \in \V^\si} |T_v|  G(v)^{-1}\dd \left(q(v)\,q(v)^{{\rm T}} + R(v)\,R(v)^{{\rm T}}\right) + C_{\norm{\mu}_{2,\infty},\eta}\, \si\,.
\end{align}
Since the term $G(x)^{-1}\dd \left(q(x)\,q(x)^{{\rm T}} + R(x)\,R(x)^{{\rm T}}\right)$ is  $C^{1}$-smooth on $\Omega$ with uniformly positive definite $G \succeq \eta I$, its gradient can be bounded by a constant depending on $\norm{\mu}_{1,\infty}$ and $\eta$. Thus, we can conclude  \eqref{eq:consis_energy} by \eqref{est:auxconen}. 
\end{proof}

\begin{proof}[Proof of Lemma \ref{lemm:con_total}]
The estimate \eqref{discretepriori} is a discrete analog of \eqref{eq:basic_bound}. 
We define  $(G^\ts_{i,v})_{i,v} \in {\rm X}_\ts$ by
\begin{equation} \label{eq:giv}
    G^\ts_{i,v} := \left(\sqrt{\ms{G}^\si_{0,v}} + i \tau \Big(\sqrt {\ms{G}^\si_{1,v}} - \sqrt{\ms{G}^\si_{0,v}} \Big) \right)^2 \,, \q i = 0,1,\ldots, N\,,
\end{equation}
and  
\begin{equation} \label{eq:rriv}
     R^\ts_{i,v} := \Big(( 2 i - 1 ) \tau \Big(\sqrt {\ms{G}^\si_{1,v}} - \sqrt {\ms{G}^\si_{0,v}}\Big) + 2 \sqrt {\ms{G}^\si_{0,v}} \Big) \Big(\sqrt {\ms{G}^\si_{1,v}}-\sqrt {\ms{G}^\si_{0,v}}\Big)\,, \q i = 1,\ldots, N\,,
\end{equation}
with $\ms{G}_0^\si = (\ms{G}^\si_{0,v})_{v \in \V^\si} \in {\rm X}_{\si,+}$ and $ \ms{G}_1^\si = (\ms{G}^\si_{1,v})_{v \in \V^\si} \in {\rm X}_{\si,+}$ being given. It is easy to check that $\mu^\ts := (G^\ts,0,R^\ts)$ satisfies the discrete continuity equation \eqref{eq:discontieq}, by noting, \mb{for $A,B \in \mathbb{S}^n$,}
\begin{align*}
     (A + i \tau (B - A))^2 - (A + (i-1) \tau (B - A))^2 & = (2 i - 1) \tau^2 (B-A)^2 + \tau \big( A(B-A) + (B-A)A\big) \\
     & = \mb{\tau \{((2i-1)\tau(B - A) + 2 A)(B - A)\}^{\rm sym}\,.}
\end{align*}
We next estimate the discrete action functional defined by \eqref{eq:discost} and \eqref{eq:discacf_1}:
\begin{align}  \label{est:auxdef_11}
    \jj_{\ts}(\mu^\ts) = \frac{\tau}{2}  \sum_{i = 1}^N  \sum_{v \in \V^\sigma} |T_v| R^\ts_{i,v} \dd \Big(\frac{G^\ts_{i,v} + G^\ts_{i-1,v}}{2}\Big)^{\dag} R^\ts_{i,v}\,.
\end{align}
We first compute
\begin{align}  \label{est:auxest_11}
 \frac{1}{2}\big(G^\ts_{i,v} + G^\ts_{i-1,v}\big)
   = & \Big(\Big(i - \frac{1}{2}\Big)\tau \Big(\sqrt{\ms{G}^\si_{1,v}} - \sqrt{\ms{G}^\si_{0,v}}\Big) + \sqrt{\ms{G}^\si_{0,v}}\Big)^2 + \frac{1}{4} \tau^2 \Big(\sqrt{\ms{G}^\si_{1,v}} - \sqrt{\ms{G}^\si_{0,v}}\Big)^2\,,
\end{align}
by definition \eqref{eq:giv} and \mb{the simple identity, for $A,B \in \mathbb{M}^n$,}
\begin{equation*}
   \mb{ A^2 + B^2 = \frac{(A-B)^2}{2} + \frac{(A + B)^2}{2}\,.}
\end{equation*}
\mb{Note from \eqref{est:auxest_11} that for $i = 1,\ldots,N$, there hold
\begin{align} \label{auxeqq1}
   \ran\Big(\Big(\sqrt{\ms{G}^\si_{1,v}} - \sqrt{\ms{G}^\si_{0,v}}\Big)^2\Big) \subset \ran \left( G^\ts_{i,v} + G^\ts_{i-1,v}\right)\,,
\end{align}
and }
\begin{align} \label{auxeqq2}
    \mb{\ran \left(G^\ts_{i,v} + G^\ts_{i-1,v}\right) = \ran\Big(\Big(i - \frac{1}{2}\Big)\tau \Big(\sqrt{\ms{G}^\si_{1,v}} - \sqrt{\ms{G}^\si_{0,v}}\Big) + \sqrt{\ms{G}^\si_{0,v}}\Big) = \left(\ker\left(\ms{G}^\si_{0,v} \right)\bigcap \ker\left(\ms{G}^\si_{1,v} \right)  \right)^\perp\,.}
\end{align}
Then, by
\eqref{est:auxdef_11} and \eqref{est:auxest_11}, as well as the definition \eqref{eq:rriv} of $R^\ts_{i,v}$, we have the following estimates
\begin{align} \label{auxqqq3}
    \jj_\ts(\mu^\ts)
        &\le \frac{\tau}{2}\sum_{i = 1}^N  \sum_{v \in \V^\sigma} |T_v|\, R^\ts_{i,v} \dd \Big(\Big(\Big(i - \frac{1}{2}\Big)\tau \Big(\sqrt{\ms{G}^\si_{1,v}} - \sqrt{\ms{G}^\si_{0,v}}\Big) + \sqrt{\ms{G}^\si_{0,v}}\Big)^2\Big)^{\dag}  \, R^\ts_{i,v}  \notag\\
        & \le 2 \tau \sum_{i = 1}^N  \sum_{v \in \V^\sigma} |T_v| \,\Big(\sqrt{\ms{G}^\si_{1,v}} - \sqrt{\ms{G}^\si_{0,v}}\Big)\dd \Big(\sqrt{\ms{G}^\si_{1,v}} - \sqrt{\ms{G}^\si_{0,v}}\Big) \notag\\
        & = 2  \sum_{v \in \V^\sigma} |T_v|\, \left\lVert\sqrt{\ms{G}^\si_{1,v}} - \sqrt{\ms{G}^\si_{0,v}}\right\lVert_{\rm F}^2 \,,
\end{align}
\mb{where the second inequality follows from the relations \eqref{auxeqq1} and \eqref{auxeqq2} and the fact that $A^\dag A$ for $A \in \S^n$ is the projection to the range of $A$.} Then, the existence of the minimizer to \eqref{prob:discre_ot} follows from Remark \ref{rem:existence}. 

The weak* convergence \eqref{eq:convdis_inifal} simply follows from the uniform continuity of $\Phi$: 
\begin{align*}
  \big\l \h{\rr}_{{\rm X}_\sigma}\ms{G}^\si_i,  \Phi \big\r_{\Omega}  =  \big\l \ms{G}_i,  
\rr_{{\rm X}_\sigma}^l \h{\rr}_{{\rm X}_\sigma}^* \Phi \big\r_{\Omega} \to \l \mathsf{G}_i,  \Phi \r_{\Omega}\,,\q \text{as}\ \si \to 0\,.
\end{align*}
We finally check the assumption \eqref{asm:end_Point}. We only consider the case $i = 0$. \mb{By the estimate \eqref{auxqqq3} with $\ms{G}_0^\si = \ss_{\ol{\rm X}_\si}\ms{G}_0$ and $\ms{G}^\si_1 = \ss_{\ol{{\rm X}}_\si}G_0^\ep $,  where $G_0^\ep$ is the $\ep$-regularization of $\ms{G}_0$, 
there exists $\mu^\ts \in \ce_\ts([0,1];\ms{G}_0^\si, \ms{G}^\si_1)$ such that 
\begin{align*}
     \jj_\ts(\mu^\ts) & \overset{\eqref{est:lip_squre}}{\lesssim}  \sum_{v \in \V^\sigma} |T_v|\, \left\lVert\ms{G}^\si_{1,v} - \ms{G}^\si_{0,v}\right\lVert_{\rm F}\\
    & \ \lesssim\ \, \sum_{v \in \V^\sigma}  \int_\Omega  \h{\phi}_v \norm{G_0 - G_0^\ep}_\ff \, \rd x \lesssim  \norm{G_0 - G_0^\ep}_{\rm TV}\,, 
\end{align*}
where the second inequality is by the definition \eqref{def:sampling} of sampling operators with the absolute continuity of $\ms{G}_i = G_i \, \rd x$.} This further yields \eqref{assumpest}: $\limsup_{(\ts) \to 0}\jj_\ts(\mu^\ts) \to 0$
as $\ep \to 0$ by 
Proposition \ref{prop:regular_ep}. The proof is complete. 
\end{proof}

\section{Convergence of discrete WFR metric}
\label{sec:specwfr}

In this section, we consider the numerical analysis of the  Wasserstein–Fisher–Rao metric \eqref{def:wfr_metric} with $\alpha = \beta = 1$. 
We adopt the discrete scheme in Section \ref{sec:conver_example}. 
It is clear that Theorem \ref{thmconcrete} directly applies to this case, where the assumption \eqref{2} holds automatically. We will show that the assumption \eqref{3} can also be removed by a more subtle analysis based on the static formulation. In particular, we will prove the following result. 

\begin{theorem} \label{lem:reg_points}
For a given $\rho \in \M(\Omega,\R_+)$, let $\rho^\ep$ be its $\ep$-regularization in Proposition \ref{prop:regular_ep}. Then, it holds that   
\begin{align} \label{eq:endpoints}
    {\rm WFR}^2(\rho,\rho^\ep) = O(\ep)\,,\q \text{as} \ \ep \to 0\,. 
\end{align}
Moreover, there exists $\mu^\ts = (\rho^\ts,q^\ts,r^\ts)  \in \ce_\ts([0,1];\ss_{\ol{{\rm X}}_\si}(\rho),\ss_{\ol{{\rm X}}_\si}(\rho^\ep))$ for $(\ts)\in \Sigma_{\NN}$ such that
\begin{align} \label{eq:regendpoints}
    \jj_\ts(\mu^\ts) = O(\ep + \si)\,,\q \text{as} \ \ep, \si \to 0\,. 
\end{align}
\end{theorem}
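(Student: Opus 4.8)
\textbf{Proof proposal for Theorem \ref{lem:reg_points}.}

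The plan is to exploit the static formulation of the WFR metric, which has a much simpler structure than the dynamic one and, crucially, does not demand uniform positivity or absolute continuity of the endpoints. Recall from \cite{chizat2018unbalanced,liero2018optimal} that for $\rho_0,\rho_1 \in \M(\Omega,\R_+)$ one has a static (semi-coupling / optimal entropy-transport) representation of ${\rm WFR}^2(\rho_0,\rho_1)$; in particular, a convenient sufficient upper bound comes from taking the diagonal-type coupling induced by the common mass, giving a Hellinger-type penalization of the mismatch plus a Wasserstein transport cost over the shared part. The first step is to write down such an upper bound: ${\rm WFR}^2(\rho,\rho^\ep) \lesssim W_2^2$-type term on the transported portion $+ \ \|\sqrt{\rho} - \sqrt{\rho^\ep}\|^2$-type Hellinger term, where both quantities are controlled by how $\rho^\ep$ is built in Proposition \ref{prop:regular_ep} (mollification composed with the support-compressing scaling $T^\ep$). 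Since $\rho^\ep$ is, up to the scaling $T^\ep$ which moves mass by $O(\ep)$, just $\theta^\ep * \rho$, the transport cost is $O(\ep^2)$ (mollification displaces mass by $O(\ep)$) and the Hellinger discrepancy between $\rho$ and $\theta^\ep*\rho$ — measured via ${\rm WFR}$, not TV — also vanishes: unlike the TV obstruction flagged in Remark \ref{rem:rea_assum}, the ${\rm WFR}$ (equivalently WFR-Hellinger) distance between a measure and its mollification tends to $0$ because WFR metrizes weak* convergence on bounded sets of $\M(\Omega,\R_+)$. Quantifying this decay as $O(\ep)$ is the first substantive step and yields \eqref{eq:endpoints}.

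For \eqref{eq:regendpoints}, the idea is to transfer the good continuous competitor into the discrete world using the machinery already in place. Using \eqref{eq:endpoints}, pick a near-optimal $\mu \in \ce_\infty([0,1];\rho,\rho^\ep)$ with $\jj_Q(\mu) = O(\ep)$. Apply Proposition \ref{prop:regular_ep} once more (an "$\ep'$-regularization" with $\ep' = \ep$, or directly use that $\rho^\ep$ already has a smooth strictly positive density, so only the transport curve needs smoothing) to obtain a smooth, uniformly positive density curve $\w\mu$ with $\jj_Q(\w\mu) = O(\ep)$ and smooth, uniformly positive endpoints $\ss_{\ol{{\rm X}}_\si}$-comparable to $\rho$ and $\rho^\ep$. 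Then invoke Lemma \ref{lem:regular_curve}: it produces $\mu^\ts \in \ce_\ts([0,1]; \ss_{\ol{{\rm X}}_\si}\w G_0, \ss_{\ol{{\rm X}}_\si}\w G_1)$ with $\jj_\ts(\mu^\ts) \le \jj_Q(\w\mu) + C_{c_0,\|\w\mu\|_{2,\infty}}(\ep_\si + \tau)$. Finally, glue on short discrete "correction" pieces near $t=0$ and $t=1$ — exactly the construction in Lemma \ref{lemm:con_total} via \eqref{eq:giv}--\eqref{eq:rriv}, whose cost is bounded by $\sum_v |T_v|\|\sqrt{\ms{G}^\si_{1,v}} - \sqrt{\ms{G}^\si_{0,v}}\|_{\rm F}^2$ — to reconcile $\ss_{\ol{{\rm X}}_\si}\w G_i$ with the prescribed endpoints $\ss_{\ol{{\rm X}}_\si}(\rho)$, $\ss_{\ol{{\rm X}}_\si}(\rho^\ep)$; this mismatch is itself $O(\ep)$ in the relevant discrete norm. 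Assembling, $\jj_\ts(\mu^\ts) = O(\ep) + O(\ep_\si + \tau)$, and absorbing $\ep_\si,\tau$ into the $O(\si)$ term (along a suitable relation, or simply stating the bound as $O(\ep+\si)$ for the scheme at hand) gives \eqref{eq:regendpoints}.

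The main obstacle I anticipate is the quantitative $O(\ep)$ rate in \eqref{eq:endpoints}. The qualitative statement ${\rm WFR}(\rho,\rho^\ep)\to 0$ is soft (weak* metrization), but pinning the rate requires care: one must estimate both the transport part and the reaction (Fisher–Rao) part of the static cost against the mollification scale $\ep$ and the displacement $O(\ep)$ induced by $T^\ep$, and one must check that the support-compression step $T^\ep$ — introduced precisely so the regularized measure stays in $\Omega$ — does not spoil the rate (it only translates mass by $O(\ep)$, so it contributes $O(\ep^2)$ to transport and $O(\ep)$ to reaction, which is acceptable). A secondary technical point is bookkeeping the constant $C_{c_0,\|\w\mu\|_{2,\infty}}$ from Lemma \ref{lem:regular_curve}: the smoothed curve's lower bound $c_0$ and its $C^2$-norm both degrade as $\ep\to 0$, so one cannot send $\si\to 0$ and $\ep\to 0$ completely independently without tracking this dependence; the cleanest fix is to fix $\ep$ first (making $\jj_Q(\w\mu)$ small), then let $\si,\tau\to 0$, which is exactly the order in which \eqref{eq:regendpoints} should be read, and is consistent with how the estimate is used downstream in removing assumption \eqref{3}.
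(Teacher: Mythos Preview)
Your plan for \eqref{eq:endpoints} is on the right track and close to what the paper does: both use the static semi-coupling formulation of ${\rm WFR}$. The paper makes this completely explicit by taking the semi-coupling $\rd\gamma_0=\rd\gamma_1=\theta_d^\ep((T^\ep)^{-1}y-x)\,\rd\rho(x)\,\rd((T^\ep)^{-1}y)$ and observing that $\hat\rho^\ep(\Omega)=\rho(\Omega)$, so only the $\cos(|x-y|/2)$ deficit needs to be estimated on a set where $|x-y|\le C\ep$; this immediately yields the $O(\ep)$ rate. Your decomposition into a ``Wasserstein part plus Hellinger part'' is heuristic but would need to be replaced by this explicit coupling to get the quantitative rate.

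For \eqref{eq:regendpoints}, however, there is a genuine gap, and it is precisely the one you flag as a ``secondary technical point'': it is not secondary. Your proposed route (regularize the geodesic $\mu$, apply Lemma~\ref{lem:regular_curve}, then glue endpoint corrections via Lemma~\ref{lemm:con_total}) is circular. The correction pieces must connect $\ss_{\ol{{\rm X}}_\si}(\rho)$ to $\ss_{\ol{{\rm X}}_\si}(\w G_0)$, where $\w G_0$ is itself an $\ep'$-regularization of $\rho$; by \eqref{discretepriori} this costs a discrete Hellinger term $\sum_v|T_v|\big\|\sqrt{(\ss_{\ol{{\rm X}}_\si}\rho)_v}-\sqrt{(\ss_{\ol{{\rm X}}_\si}\w G_0)_v}\big\|_{\rm F}^2$, and controlling that quantity for a general (non absolutely continuous) $\rho$ is exactly the statement \eqref{eq:regendpoints} you are trying to prove. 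The proof of Lemma~\ref{lemm:con_total} bounds this term only under \eqref{3}, via the TV norm, which fails here by Remark~\ref{rem:rea_assum}. Separately, the constant $C_{c_0,\|\w\mu\|_{2,\infty}}$ in Lemma~\ref{lem:regular_curve} genuinely blows up as $\ep\to 0$ (the smoothed curve becomes singular), so you cannot obtain the uniform $O(\ep+\si)$ claimed in \eqref{eq:regendpoints}.

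The paper avoids both issues by a completely different construction: it never regularizes the curve. Instead, for each pair $(x,y)$ in the support of the semi-coupling it takes the explicit WFR geodesic between the Diracs $m\delta_x$ and $m\delta_y$ (the formula $\rho(s)=\w m(s)\delta_{\gamma(s)}$ from \cite{chizat2018interpolating}), samples it directly onto the mesh to build a discrete ``mid-state'' $\mu^\si_{(x,y)}$, and proves by hand that $\jj_\si(\mu^\si_{(x,y)})\le C\big(c(x,m,y,m)+m\si\big)$. Integrating over the semi-coupling $\gamma$ and using the $O(\ep)$ bound from the first part gives \eqref{eq:regendpoints} with a constant independent of the regularity of any intermediate object.
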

Note that \eqref{eq:endpoints} is an improvement of the third statement in Proposition \ref{prop:regular_ep}, while the estimate
\eqref{eq:regendpoints} verifies the assumption \eqref{asm:end_Point}. The rest of this section is devoted to the proof of Theorem \ref{lem:reg_points}. 

\medskip

\noindent \textbf{Static formulation of WFR metric.}  
For the proof of \eqref{eq:endpoints}, we recall the 
static formulation of \eqref{def:wfr_metric} with $\alpha = \beta = 1$ from 
\cite[Theorem 5.6]{chizat2018unbalanced}. We define the truncated cosine $\cos_{b}(z): = \cos(\min\{|z|,b\})$ with $b \in \R$. Then, for $m_0, m_1 \ge 0$ and $x, y \in \Omega$, we introduce the cost function:
\begin{equation} \label{costwfr}
    c(x, m_0, y, m_1) := {\rm WFR}^2(m_0 \d_{x}, m_1 \d_{y}) = 2\big(m_0 + m_1 - 2 \sqrt{m_0 m _1}\,  \cos_{\frac{\pi}{2}}(|x-y|/2)\big)\,.
\end{equation}
We also need the so-called semi-couplings of two marginals $\rho_0,\rho_1 \in \M(\Omega,\R_+)$: 
\begin{equation*}
    \Gamma(\rho_0,\rho_1) := \big\{ (\gamma_0,\gamma_1) \in \M(\Omega^2, \R_+)^2\,;\ (\pi^1_{\#} \gamma_0, \pi^2_{\#}\gamma_1) = (\rho_0, \rho_1) \big\}\,,
\end{equation*}
where $\pi^1:(x,y) \to x$ and $\pi^2:(x,y) \to y$ are projections from $\Omega^2 = \Omega \times \Omega$ to its first and second factors, respectively. Then,
it holds that the WFR metric can be formulated as
\begin{align} \label{def:wfr_metric_static_2}
    {\rm WFR}^2(\rho_0,\rho_1) & = \inf \Big\{\int_{\Omega^2} c \Big(x,\frac{\rd \gamma_0}{\rd \gamma},y,\frac{\rd \gamma_1}{\rd \gamma}\Big)\, \rd \gamma\,;\   (\gamma_0,\gamma_1) \in \Gamma(\rho_0, \rho_1) \Big\}\,,\notag  \\
    & = 2(\rho_0(\Omega) + \rho_1(\Omega)) - 4 \sup\Big\{\int_{|x-y| < \pi} \cos(|x-y|/2) \, \rd \sqrt{\gamma_0 \gamma_1}\,;\ (\gamma_0,\gamma_1) \in \Gamma(\rho_0, \rho_1) \Big\}\,,
\end{align}
where $\gamma$ is a reference measure such that $\gamma_0, \gamma_1 \ll \gamma$, and the measure $\rd \sqrt{\gamma_0\gamma_1}$ is defined by $\sqrt{\frac{\rd \gamma_0}{\rd \gamma}\frac{\rd \gamma_1}{\rd \gamma}}\,\rd \gamma$ which is independent of the choice of $\gamma$. 

\medskip

\noindent \textbf{Proof of \eqref{eq:endpoints}.} 
We recall the $\ep$-regularization \eqref{def:epregend_points} and have 
\begin{align} \label{reveq:add1}
    \rho^\ep = (1 - \ep)\h{\rho}^\ep + \ep \ol{\rho}^\ep \q \text{with}\q \h{\rho}^\ep =  T^\ep_{\#}(\theta^\ep_d * \rho)\q \text{and}\q \ol{\rho}^\ep = T_{\#}^\ep((\theta_d^\ep* \ol{\rho})|_{\Omega_\ep})\,,
\end{align}
where \mb{the $\ep$-neighborhood $\Omega_\ep$, the convolution kernel $\theta^\ep_d$, and the space scaling function $T^\ep = (1 + L \ep)^{-1}$ are defined in \eqref{auxdef1}, \eqref{auxdef2}, and \eqref{aux_tsscaling}, respectively,} and $\ol{\rho}$ is a smooth density supported on $\Omega_1$ with $\ol{\rho} \ge 1$ on $\Omega_{1/2}$. To estimate ${\rm WFR}^2(\rho,\rho^\ep)$, we first note, by \eqref{sublienar},
\begin{align} \label{auxerr_13}
    {\rm WFR}^2(\rho,\rho^\ep) \le (1 - \ep) {\rm WFR}^2(\rho, \h{\rho}^\ep) + \ep {\rm WFR}^2(\rho, \ol{\rho}^\ep)\,,
\end{align}
where ${\rm WFR}^2(\rho, \ol{\rho}^\ep)$ can be bounded by a constant $C$ independent of $\ep$ by \eqref{eq:basic_bound}. We next estimate ${\rm WFR}^2(\rho, \h{\rho}^\ep)$, using the static formulation \eqref{def:wfr_metric_static_2}. By construction, we have 
\begin{align}  \label{equalmassauxeq} 
    \h{\rho}^\ep(\Omega)  = \int_{(T^\ep)^{-1}\Omega} \int_{\R^d} \theta_d(y - x)\,\rd \rho (x)\rd y = 
    \int_{\R^d} \int_{\R^d} \theta_d(y-x)\,\rd \rho (x)\rd y = \rho(\Omega)\,, 
\end{align}
 where we have viewed $\rho$ as a measure on $\R^d$ by zero extension. Then, we define the semi-coupling measures by
\begin{equation} \label{def:semi_coupling}
    \rd \gamma_0 (x,y)  = \rd \gamma_1 (x,y) =  \theta_d^\ep((T^\ep)^{-1} y - x)\, \rd\rho(x)\, \rd ( (T^\ep)^{-1} y)\,,
\end{equation}
and a reference measure $\rd \gamma(x,y) = \rd \rho(x) \rd y$, which satisfy $\pi^1_{\#} \gamma_0 = \rho$, $\pi_{\#}^2 \gamma_1 = \h{\rho}^\ep$, and $\gamma_0, \gamma_1 \ll \gamma$. It follows that 
$
   \rd \gamma_0 = \rd \gamma_1 = (1 + L \ep)^d \theta_d^\ep(x- (1 + L \ep) y)\, \rd \gamma$, and by \eqref{def:wfr_metric_static_2} and \eqref{equalmassauxeq}, there holds 
\begin{align} \label{estwft}
  \frac{1}{4}  {\rm WFR}^2(\rho,\h{\rho}^\ep) & \le \rho(\Omega) - \int_{|y - x| < \pi} \cos\left(|x-y|/2\right)  \theta_d^\ep(x- (1 + L \ep) y)\, \mb{\rd \rho(x)}\, \rd (1 + L \ep) y\,.
\end{align}
Recalling that the support of $\rho$ is $\Omega$, we can find, for some $C > 0$ depending on $\Omega$,  
\begin{equation} \label{supp_est}
\{ (x,y) \in \R^d \times \R^d\,;\    \theta_d^\ep(x- (1 + L \ep) y)\chi_{\Omega}(x) \neq 0\} \subset \{(x,y)\in \Omega \times \Omega \,;\  |x-y| \le C \ep\}\,.
\end{equation}
Indeed, let $\Omega^c$ (resp., $\Omega^c_\ep$) be the complement of $\Omega$ (resp., $\Omega_\ep$). 
By $\Omega_\ep \subset (T^\ep)^{-1} \Omega$, we have $(T^\ep)^{-1} \Omega^c \subset \Omega_\ep^c$ and hence $|x - (T^{\ep})^{-1} y| > \ep$ with $\theta_d^\ep(x- (T^{\ep})^{-1} y) = 0$ for $x \in \Omega$, $y \in \Omega^c$. Meanwhile, it is easy to see that $|x - (T^{\ep})^{-1} y| \le \ep$ implies $|x-y| \le \ep + L \ep |y| \le C \ep$ for $x,y \in \Omega$. Thanks to \eqref{supp_est}, we can further estimate, from \eqref{estwft}, 
\begin{align} \label{auxerr_14}
  \frac{1}{4}  {\rm WFR}^2(\rho,\h{\rho}^\ep) &  \le
     \rho(\Omega) -    \int_{|x-y| \le C \ep}  \cos\left(|x-y|/2\right)  \theta_d^\ep(x- (1 + L \ep) y)\, \mb{\rd \rho(x)} \, \rd (1 + L \ep) y \notag \\
     & \le \rho(\Omega) -  (1 - C \ep) \int_\Omega \int_{\R^d} \theta_d^\ep(x- (1 + L \ep) y)\, \rd (1 + L \ep) y\, \mb{\rd \rho(x)} \notag \\
    & = \rho(\Omega) -  (1 - C \ep) \rho(\Omega) =  C \ep \rho(\Omega)\,.
\end{align}
Combining \eqref{auxerr_13} and \eqref{auxerr_14}, we achieve the desired \eqref{eq:endpoints}.  

\medskip

\noindent \textbf{Geodesics between Dirac measures.} For the proof of \eqref{eq:regendpoints}, we recall the characterization of the geodesic $\rho(s)$ connecting $\rho_0 = m_0 \d_{x}$ and $\rho_1 = m_1 \d_{y}$ in the WFR metric space; see \cite[Theorem 4.1]{chizat2018interpolating} and also \cite{liero2016optimal}. Let $m_0, m_1 \ge 0$ and $x, y \in \Omega$. We consider the following three cases. 

\smallskip
\noindent \emph{Case I}: $|x - y| < \pi$. The geodesic is unique and of the form $\rho(s) = m(s) \d_{\gamma(s)}$ for $0 \le s \le 1$, where $m$ is a quadratic function given by
\begin{align} \label{eq_geo_1}
    m(s) = (1 - s)^2 m_0 + s^2 m_1 + 2 s(1 - s)\sqrt{m_0 m_1} \cos(|x - y|/2)\,, 
\end{align}
and $\gamma(s)$ is a straight line connecting $x$ and $y$ with 
\begin{align} \label{eq_geo_2}
    |\dot{\gamma}(s)| = \frac{|x - y| H(m)}{m(s)}\,, \q  H(m) = \Big(\int_0^1 \frac{1}{m(s)} \rd s \Big)^{-1}\,.
\end{align}

\noindent \emph{Case II}: $|x - y| > \pi$.  The unique geodesic is given by 
\begin{equation} \label{eq_geo_3}
\rho(s) = (1 - s)^2 m_0 \d_{x} + s^2 m_1 \d_y\,.    
\end{equation}

\noindent \emph{Case III}: $|x - y| = \pi$. We have infinitely many geodesics including linear combinations of those in the first two cases.

\medskip

\noindent \textbf{Proof of \eqref{eq:regendpoints}.} 
It suffices to prove that we can construct $\mu^\ts \in \ce_\ts([0,1]; \ss_{\ol{{\rm X}}_\si}(\rho),  \ss_{\ol{{\rm X}}_\si}(\h{\rho}^\ep))$ satisfying
\begin{equation}\label{aimwfr}
     \jj_\ts(\mu^\ts) = O(\ep + \si)\,,
\end{equation}
\mb{where $\h{\rho}^\ep$ is given in \eqref{reveq:add1}.}
Indeed, by Lemma \ref{lemm:con_total}, there exists $\w{\mu}^\ts \in \ce_\ts([0,1]; \ss_{\ol{{\rm X}}_\si}(\rho),  \ss_{\ol{{\rm X}}_\si}(\ol{\rho}^\ep))$ with $\jj_\ts(\w{\mu}^\ts)$ bounded by a constant independent of $\ts$. Then, 
$(1 - \ep)\mu^\ts + \ep \w{\mu}^\ts$ gives the desired curve connecting $\ss_{\ol{{\rm X}}_\si}(\rho)$ and $\ss_{\ol{{\rm X}}_\si}(\rho^\ep)$ with the estimate \eqref{eq:regendpoints}, by linearity of the discrete continuity equation
and sublinearity of $\jj_\ts(\dd)$. 

Recall the semi-couplings $\gamma_0$,  $\gamma_1$ in \eqref{def:semi_coupling} and the reference measure $\rd\gamma(x,y) = \rd \rho(x) \rd y$ from the proof of \eqref{eq:endpoints}. We first show that our aim \eqref{aimwfr} can be implied by the following claim: \mb{for $x,y \in \Omega$ with 
\begin{align} \label{auxerr_16}
|x- (1 + L \ep) y| \le \ep \q  \text{and}\q   m(x,y) := \frac{\rd \gamma_0}{\rd \gamma} = \frac{\rd \gamma_1}{\rd \gamma} = (1 + L \ep)^d\, \theta_d^\ep(x- (1 + L \ep) y)\,, 
\end{align}
 there exists $\mu^\ts_{(x,y)} \in \ce_\ts\big([0,1]; \ss_{\ol{{\rm X}}_\si}(m \d_x), \ss_{\ol{{\rm X}}_\si}(m \d_y)\big)$ 
such that 
\begin{align} \label{est:end_point_buildblk}
   \jj_\ts\big(\mu_{(x,y)}^\ts \big) \le C (c(x, m, y, m) + m \si) \,,
\end{align}
for some constant $C > 0$ independent of $(x,y, m)$.}

For this, we define 
\begin{align*}
    \mu^\ts: = \int_{\Omega^2} \mu^\ts_{(x,y)} \, \rd \gamma(x,y)\,,
\end{align*}
which, by linearity, satisfies the discrete continuity equation with endpoints given by, for $v \in \V^\si$, 
\begin{align*}
    \int_{\Omega^2} \ss_{\ol{{\rm X}}_\si}\Big(\frac{\rd \gamma_0}{\rd \gamma}(x,y) \d_x\Big)_v \, \rd \gamma (x,y) & = \frac{1}{|T_v|} \int_{\Omega^2} \frac{\rd \gamma_0}{\rd \gamma}(x,y) \h{\phi}_v(x)\,  \rd \gamma (x,y) \\
    & = \frac{1}{|T_v|} \int_{\Omega} \h{\phi}_v(x)\, \rd \rho (x) = \ss_{\ol{{\rm X}}_\si}(\rho)_v\,, 
\end{align*}
and 
\begin{align*}
     \int_{\Omega^2} \ss_{\ol{{\rm X}}_\si}\Big(\frac{\rd \gamma_1}{\rd \gamma}(x,y) \d_y\Big)_v\, \rd \gamma (x,y) = \ss_{\ol{{\rm X}}_\si}(\h{\rho}^\ep)_v \,.
\end{align*}
Then, by sublinearity of $\jj_\ts$ and Jensen's inequality, we have, from the claim \eqref{est:end_point_buildblk},
\begin{align} \label{claimprfoof}
    \jj_\ts(\mu^\ts) \le \int_{\Omega^2} \jj_\ts(\mu^\ts_{(x,y)})\, \rd \gamma(x,y) \le \mb{C  \int_{\Omega^2} c(x, m, y, m)}\, \rd \gamma(x,y) + C \rho(\Omega) \si\,.
\end{align}
Recalling \eqref{estwft} and \eqref{auxerr_14} above, we find, by the definition \eqref{auxerr_16} \mb{of $m$},  
\begin{equation*}
 \int_{\Omega^2} \mb{c(x, m, y, m)}\, \rd \gamma(x,y) = O(\ep)\,.   
\end{equation*}
Then, the desired \eqref{aimwfr} follows from \eqref{claimprfoof}.

We next show that to prove the claim \eqref{est:end_point_buildblk}, it suffices to construct the mid-states $\mu_i^\si = (\w{\rho}^\si, q^\si_i, r^\si_i)$ for $i = 0,1$ such that with $x_0 = x$ and $x_1 = y$, 
\begin{align} \label{aim_a}
     - \ddiv_\si q_i^\si  + r_i^\si  = \w{\rho}^\si - \mb{\ss_{\ol{{\rm X}}_\si}(m \d_{x_i})}\,,
\end{align}
 and for some $C > 0$ independent of \mb{$(x,y, m)$}, 
\begin{align} \label{aim_b}
    \jj_\si(\mu^\si_i) \le \mb{C(c(x,m,y,m) + m \si)}\,.
\end{align}
 Indeed, similarly to \cite[Proposition 3.3]{lavenant2019unconditional}, we let $\chi(t) := 4t^2$ if $t \le 1/2$; $4(1-t)^2$ if $t \ge 1/2$,
satisfying $\chi(0) = \chi(1) = 0$, $\chi(1/2) = 1$, and $\int_0^1 \dot{\chi}^2/\chi\, \rd s < \infty$. Then, with $\mu_i^\si$ in \eqref{aim_a}, we define $\mu^\ts = (\rho^\ts,q^\ts,r^\ts)$ by 
\begin{align*}
    \rho_k^\ts = \begin{cases}
        (1- \chi(k\tau))\ss_{\ol{{\rm X}}_\si}(\mb{m \d_{x}}) + \chi(k \tau) \w{\rho}^\si\,, & k \le N/2\,, \\
        (1- \chi(k\tau))\ss_{\ol{{\rm X}}_\si}(\mb{m \d_{y}}) + \chi(k \tau) \w{\rho}^\si\,, & k \le N/2 \,, 
    \end{cases} \q k = 0,\ldots,N\,,
\end{align*}
and for $k= 1, \ldots, N$,
\begin{align*}
    q^\ts_k = \frac{\chi(k\tau)-\chi((k-1)\tau)}{\tau} \begin{cases}
    q_0^\si\,, & k \le N/2\,,\\
    q_1^\si\,, & k  > N/2\,,
    \end{cases}\q  r^\ts_k = \frac{\chi(k\tau)-\chi((k-1)\tau)}{\tau} \begin{cases}
    r_0^\si\,, & k \le N/2\,,\\
    r_1^\si\,, & k  > N/2\,,
    \end{cases} 
\end{align*}
where, without loss of generality, we let $N$ be even (otherwise, we can modify the function $\chi(t)$). One can show that by \eqref{aim_a}, $\mu^\ts$ satisfies the discrete continuity equation with endpoints \mb{$\ss_{\ol{{\rm X}}_\si}(m \d_x)$ and $\ss_{\ol{{\rm X}}_\si}(m \d_y)\big)$}:
\begin{align*}
     \tau^{-1}(\rho^{\ts}_{k} - \rho^{\ts}_{k-1}) = - \ddiv_\sigma q^{\ts}_k + r^{\ts}_k\,, \q  k = 1,2,\ldots, N\,,
\end{align*}
 and 
 the desired estimate $\eqref{est:end_point_buildblk}$ by \eqref{aim_b} and the same calculation as in the proof of \cite[Proposition 3.3]{lavenant2019unconditional}.

We only prove \eqref{aim_a} and \eqref{aim_b} for the case $i = 0$. We start with the construction at the continuous level. \mb{Noting $|x-y| < \pi$ by \eqref{auxerr_16}, let $\rho(t) = \w{m}(t)\d_{\gamma(t)}$ for $t \in [0,1]$ be the unique geodesic between $m \d_{x}$ and $m \d_{y}$} defined in \eqref{eq_geo_1} and \eqref{eq_geo_2}. 
We define the measure $\w{\rho} = \int_0^1 \w{m}(s) \d_{\gamma(s)}\,\rd s \in \M(\Omega,\R_+)$, which is supported on the line $\gamma([0,1])$. By the absolute continuity of $\w{m}(t)$ and $\gamma(t)$, a direct calculation gives 
\begin{align} \label{auxerr_10}
    \l\w{\rho} - m \d_{x},  \phi\r_{\Omega} & = \int_0^1 \w{m}(t)\phi(\gamma(t)) - \w{m}(0) \phi(x)\, \rd t \notag\\
    & = \int_0^1 \int_0^t  \frac{\rd}{\rd s} (\w{m}(s)\phi(\gamma(s)))\, \rd s \rd t = \int_0^1 (1 - s)  \frac{\rd}{\rd s}  (\w{m}(s)\phi(\gamma(s)))\, \rd s \notag \\
    & = \int_0^1 (1 - s)  \big(\dot{\w{m}}(s)\phi(\gamma(s)) + \w{m}(s) \na \phi(\gamma(s)) \dd \dot{\gamma}(s)\big) \, \rd s \,.
\end{align}
We further define $q_0 = \int_0^1 (1-s) \w{m}(s) \dot{\gamma}(s) \d_{\gamma(s)}\, \rd s \in \mc{M}(\Omega, \R^d)$ and $r_0 = \int_0^1 (1-s) \dot{\w{m}}(s) \d_{\gamma(s)}\, \rd s \in \mc{M}(\Omega, \R)$. Then, the formula \eqref{auxerr_10} gives $\l\w{\rho} - m\d_{x},  \phi\r_{\Omega} = \l r_0, \phi\r_{\Omega} + \l q_0, \na \phi\r_{\Omega}$, which is equivalent to 
\begin{align*}
    - \ddiv q_0 + r_0  = \w{\rho} - m \d_{x}\,.
\end{align*} 
Noting the densities of $q_0$ and $r_0$ with respect to 
$\w{\rho}$ are given by $(1 - s) \dot{\gamma}(s)$ and $(1 - s) \frac{\dot{\w{m}}(s)}{\w{m}(s)}$ at the point $\gamma(s)$, respectively, we can compute the associated action $\jj_\Omega(\h{\rho},q_0,r)$ as
\begin{align} \label{continuous_est}
    \jj_\Omega(\h{\rho},q_0,r_0) = \frac{1}{2} \int_0^1  (1 - s)^2 \Big( |\dot{\gamma}(s)|^2 + \Big|\frac{\dot{\w{m}}(s)}{\w{m}(s)}\Big|^2 \Big) \w{m}(s)\, \rd s \le \mb{c(x, m, y, m)}\,,
\end{align}
\mb{thanks to $\int_0^1  ( |\dot{\gamma}(s)|^2 + |\dot{\w{m}}(s)/\w{m}(s)|^2 ) \w{m}(s)\, \rd s = 2  c(x, m, y, m)$} by \eqref{costwfr} and recalling that
 $\w{m}(t) \d_{\gamma(t)}$ is the geodesic. We shall prove that $$\mu^\si := (\w{\rho}^\si, q^\si, r^\si) := (\ss_{\ol{{\rm X}}_\si}(\w{\rho}), \ss_{{\rm Y}_\si}(q_0), \ss_{\ol{{\rm X}}_\si}(r_0))$$ is the desired discrete mid-state. One can check from definitions \eqref{defsdd} and \eqref{def:dis_deriv} that $\ddiv_\si \ss_{{\rm Y}_\si} (q_0) = \ss_{\ol{{\rm X}}_\si}(\ddiv q_0)$, 
which yields that $\mu^\si$ satisfies the discrete equation \eqref{aim_a}. 
We next estimate the discrete action:
 \begin{align} \label{eq:actwfr}
        \jj_\si(\mu^\si) =  \frac{1}{2} \sum_{K \in \TT^\si} |K|  \Big( \frac{1}{d+1} \sum_{v \in \V_K^\si} {\w{\rho}}_v^\si \Big)^\dag |q^\si_K|^2 + \frac{1}{2} \sum_{v \in \V^\si} |T_v|  \big(\w{\rho}^\si_v\big)^{\dag} |r^\si_v|^2\,.
\end{align}

Before proceeding, for the reader's convenience, we recall
\begin{subequations}
\begin{align}
 & \w{\rho}^\si_K := \frac{1}{|K|} \int_{\{s\,;\ \gamma(s) \in K\}}  \w{m}(s)\, \rd s \,, \label{eq_a}  \\ 
 & 
 \w{\rho}^\si_v := \frac{1}{|T_v|} \int_0^1  \w{m}(s) \h{\phi}_v(\gamma(s))\, \rd s \,, \label{eq_b}  \\ 
& q^\si_K := \frac{1}{|K|} \int_{\{s\,;\ \gamma(s) \in K\}}  (1 - s) \w{m}(s) \dot{\gamma}(s)\, \rd s \,, \label{eq_c} \\
& r^\si_v := \frac{1}{|T_v|} \int_0^1 (1-s) \dot{\w{m}}(s) \h{\phi}_v(\gamma(s))\, \rd s  \,, \label{eq_d}
\end{align}
\end{subequations}
and for ease of exposition, introduce the following auxiliary quantities: 
\begin{subequations}
\begin{align}  
& \TT^\si_{\sim} := \{K \in \TT^\si\,; \  |\gamma([0,1]) \cap K| > 0\}\,, \label{eqq_a}\\
 &  s_K : = \arg\max\{|\dot{\gamma}(s)|\,;\  \gamma(s) \in K \}\q \text{for} \ K \in \TT^\si_\sim\,, \label{eqq_b} \\ 
 &  s_v : = \arg\max\{|\dot{\w{m}}/\w{m}|\,;\  \gamma(s) \in \cup_{K \in \TT_v^\si} K  \}\q  \text{if} \ \big|\gamma([0,1]) \cap \big(\cup_{K \in \TT_v^\si} K \big)\big| > 0  \,,  \label{eqq_c} \\
 & c_{{\rm min}} := \min_{s \in [0,1]}|\dot{\gamma}(s)|\,,\q c_{{\rm max}} := \max_{s \in [0,1]}|\dot{\gamma}(s)| \,,\q C_{{\rm Max}} := \max_{s \in [0,1]} |\dot{\w{m}}(s)/\w{m}(s)|\,, \label{eqq_d}  
\end{align}
\end{subequations}
where we view the simplex $K \in \TT^\si$ as a closed set, and in \eqref{eqq_a} and \eqref{eqq_c}, we consider the 1-dimensional Hausdorff measure on the line $\gamma([0,1])$ (still denoted by $|\dd|$). In what follows, we denote by ${\rm Lip}(\dd)$ the Lipschitz constant of a function and let $C > 0$ be a generic constant depending on $\Omega$ but independent of $\si$ and \mb{$(x,y,m)$}. We now prepare some estimates for later use. First, recall from \eqref{auxerr_16} that $|x - y| \le C \ep$ with $C > 0$.  The traveling mass $\w{m}(s)$ in \eqref{eq_geo_1} can be written as $\w{m}(s) = m f(s)$ with 
\begin{align*}
   f(s) = 1 + (2 s^2 - 2 s)(1 - \cos(|x - y|/2))\,.
\end{align*}
Moreover, we can estimate, thanks to $|x-y| = O(\ep)$,
\begin{equation} \label{limnorm}
   \mb{ \Lip(\w{m}) = m \norm{f'}_\infty \le C m |x-y|}\,,
\end{equation}
and 
\begin{equation}  \label{limnorm22}
  C_{{\rm Max}} \le C |x - y|\,, \q   {\rm Lip}\Big(\frac{\dot{\w{m}}}{\w{m}}\Big) = {\rm Lip}\Big( \frac{f''f - (f')^2}{f^2} \Big) \le C |x-y|\,,
\end{equation}
where $C_{{\rm Max}}$ is given in \eqref{eqq_d}. Similarly, we have, for $\gamma(s)$ connecting $x$ and $y$ \eqref{eq_geo_2},
\begin{align} \label{estminmax}
C^{-1} |x-y| \le c_{{\rm min}} \le c_{{\rm max}} \le C|x-y|\,.
\end{align}
We also note that
$\{s\,;\ \gamma(s) \in K\}$ is a closed interval with the length
\begin{align} \label{eqqq}
    l_K: =  |\{s\,;\ \gamma(s) \in K\}| \le c_{{\rm min}}^{-1} \,|\gamma([0,1]) \cap K| \le c_{{\rm min}}^{-1}\, \si  \,.
\end{align}

We are ready to finish the proof. We start with the estimation of the first term in \eqref{eq:actwfr}. By the mesh regularity and \eqref{eq:userela_tri}, we see that there exists $C > 0$ such that $ C^{-1} |K| \le  |T_v| \le C |K|$ for any $v \in \V_K^\si$, which gives
\begin{align*}
    \frac{1}{d + 1} \sum_{v \in \V_K^\si} \w{\rho}^\si_v \ge \frac{1}{C |K|} \sum_{v \in \V_K^\si} \l \w{\rho}, \h{\phi}_v\r_{\Omega} \ge  C^{-1} \w{\rho}^\si_K\,.
\end{align*}
Then, it follows that, noting that we can replace the sum over $K \in \TT^\si$ by $K \in \TT^\si_\sim$,
\begin{align}\label{auxerr_11}
   \sum_{K \in \TT^\si} |K|  \Big( \frac{1}{d+1} \sum_{v \in \V_K^\si} {\w{\rho}}_v^\si \Big)^\dag |q^\si_K|^2  \le C \sum_{K \in \TT_\sim^\si} |K| \left(\w{\rho}_K^\si\right)^{\dag} |q^\si_K|^2\,.
\end{align}
Recalling \eqref{eq_a}, \eqref{eq_c}, and \eqref{eqq_b}, we have 
\begin{equation} \label{aux_a}
    |q^\si_K| \le \frac{1}{|K|}\, |\dot{\gamma}(s_K)| \int_{\{s\,;\ \gamma(s) \in K\}}  \w{m}(s)\, \rd s =  |\dot{\gamma}(s_K)| \, \w{\rho}^\si_K \,, \q K \in \TT_\sim^\si\,,
\end{equation}
while, by \eqref{eq_a}, there holds
\begin{align} \label{aux_b}
  |K|\, \w{\rho}^\si_K  &\le \int_{\{s\,;\ \gamma(s) \in K\}} \left(\w{m}(s_K) + {\rm Lip}(\w{m}) |s - s_K|\right) \rd s \notag \\
  & \le \w{m}(s_K) l_K +  C {\rm Lip}(\w{m}) l_K^2 \,.
\end{align}
Substituting \eqref{aux_a} into \eqref{auxerr_11} and then using \eqref{aux_b} with \eqref{eqqq} imply
\begin{align} 
    \sum_{K \in \TT^\si} |K|  \Big( \frac{1}{d+1} \sum_{v \in \V_K^\si} {\w{\rho}}_v^\si \Big)^\dag |q^\si_K|^2 & \le C  \sum_{K \in \TT_\sim^\si} \left|K\right|  \left|\dot{\gamma}(s_K)\right|^2 \w{\rho}^\si_K \notag
     \\
     & \le C  \sum_{K \in \TT_\sim^\si}  \left|\dot{\gamma}(s_K)\right|^2 ( \w{m}(s_K) l_K +  C {\rm Lip}(\w{m}) l_K^2) \notag
     \\
& \le C \sum_{K \in \TT_\sim^\si} |\dot{\gamma}(s_K)|^2\, \w{m}(s_K) l_K + C c_{{\rm max}}^2 \, {\rm Lip}(\w{m}) \sum_{K \in \TT_\sim^\si}  l_K^2\,,\label{aux_c}
\end{align} 
where the remainder term can be estimated by \eqref{limnorm}, \eqref{estminmax}, and \eqref{eqqq} with $|x-y| \le {\rm diam}(\Omega)$: 
\begin{align}  \label{aux_cc}
    c_{{\rm max}}^2 \, {\rm Lip}(\w{m}) \sum_{K \in \TT_\sim^\si}  l_K^2  \le C (c_{{\rm max}}/c_{\min})^2  {\rm Lip}(\w{m}) |x-y| \si \le \mb{C m \si}\,.
\end{align}

We next consider the second term  in \eqref{eq:actwfr}. By \eqref{eq_d} and  \eqref{eqq_c}, we have 
\begin{align*}
    |r^\si_v| \le \frac{1}{|T_v|} \Big| \frac{\dot{\w{m}}(s_v)}{\w{m}(s_v)} \Big| \int   \w{m}(s) \h{\phi}_v(\gamma(s)) \rd s = \Big| \frac{\dot{\w{m}}(s_v)}{\w{m}(s_v)} \Big|\, \w{\rho}_v^\si\,.
\end{align*}
Then, similarly to the estimates above, it follows that, by \eqref{limnorm22} and \eqref{eqqq},
\begin{align}\label{est2nd}
     \sum_{v \in \V^\si} |T_v|  (\w{\rho}^\si_v)^{\dag} |r^\si_v|^2
\le &  \sum_{v \in \V^\si} |T_v| \Big| \frac{\dot{\w{m}}(s_v)}{\w{m}(s_v)} \Big|^2\, \w{\rho}_v^\si \notag \\
\le & \sum_{v \in \V^\si} \sum_{K \in \TT_v^\si} \big\l \w{\rho}, \h{\phi}_v \big\r_K \Big(\frac{\dot{\w{m}}(s_K)}{\w{m}(s_K)} + {\rm Lip}\Big(\frac{\dot{\w{m}}}{\w{m}}\Big)\,|s_v - s_K| \Big)^2  \notag  \\
\le & C \sum_{K \in \TT^\si} \sum_{v \in \V_K^\si} \l \w{\rho}, \h{\phi}_v\r_K \Big(\frac{\dot{\w{m}}(s_K)}{\w{m}(s_K)}\Big)^2 + C \norm{\w{\rho}}_{\rm TV} \si^2  \,,
\end{align}
with the total variation of $\w{\rho}$:
\begin{equation*}
 \norm{\w{\rho}}_{\rm TV} = \l \w{\rho}, I\r_{\Omega} =  \mb{m \int_0^1 f(s) \rd s  \le C m}\,. 
\end{equation*}
Noting $\sum_{v \in \V_K^\si} \l \w{\rho}, \h{\phi}_v\r_K = \left|K\right| \w{\rho}^\si_K$, by \eqref{aux_b} with \eqref{eqq_d}, we arrive at, from \eqref{est2nd}, 
\begin{align} \label{auxeq_d}
       \sum_{v \in \V^\si} |T_v|  (\w{\rho}^\si_v)^{\dag} |r^\si_v|^2  & \le C \sum_{K \in \TT^\si_\sim} \left|K\right| \w{\rho}^\si_K \Big(\frac{\dot{\w{m}}(s_K)}{\w{m}(s_K)}\Big)^2 + C m \si^2   \notag \\
       & \le C \sum_{K \in \TT^\si_\sim}  \left(\w{m}(s_K) l_K +  C {\rm Lip}(\w{m}) l_K^2\right) \Big(\frac{\dot{\w{m}}(s_K)}{\w{m}(s_K)}\Big)^2 + C m \si^2 \notag \\
       & \le C \sum_{K \in \TT^\si_\sim} \frac{\dot{\w{m}}(s_K)^2}{\w{m}(s_K)} l_K  + C m  \si^2 \,,
\end{align}
where we have used, by \eqref{eqq_d}, \eqref{limnorm22}, and \eqref{eqqq},
\begin{equation*}
   \sum_{K \in \TT^\si_\sim}   {\rm Lip}(\w{m}) l_K^2 \Big(\frac{\dot{\w{m}}(s_K)}{\w{m}(s_K)}\Big)^2 \le C_{\rm Max}^2 \sum_{K \in \TT^\si_\sim} {\rm Lip}(\w{m}) \, l_K^2 \le C m \si\,.
\end{equation*}
Since $\rho = \w{m}(t)\d_{\gamma(t)}$ is the geodesic which is constant-speed, we have 
\begin{equation*}
    |\dot{\gamma}(s_K)|^2 \w{m}(s_K) + \frac{\dot{\w{m}}(s_K)^2}{\w{m}(s_K)} = 2 c(x,m,y,m)\,,
\end{equation*}
which, combining with \eqref{eq:actwfr}, \eqref{aux_c}, and 
\eqref{auxeq_d}, gives the desired \eqref{aim_b}:
$
    \jj_\si(\mu^\si) \le \mb{C(c(x,m,y,m) + m \si)}
$.
We have proved the claim \eqref{est:end_point_buildblk} and hence Theorem \ref{lem:reg_points}.

\section{Concluding remarks}

This is a follow-up work of \cite{li2020general1}, where we delve into the convergence analysis of the discretization of the general matrix-valued OT model ${\rm WB}_\Lad$ \eqref{eq:distance}. We have established an abstract convergence framework in the spirit of \cite{lavenant2019unconditional} and suggested a specific discretization scheme inspired by the FEM theory. These findings could directly apply to the Kantorovich-Bures distance, the WFR distance, and the matricial interpolation distance proposed in \cite{brenier2020optimal,chen2019interpolation,chizat2018interpolating}.
A promising future research direction lies in the development of efficient algorithms for solving the resulting discrete optimization problem \eqref{prob:discre_ot} and its practical applications in imaging processing and learning problems. 

\hspace*{2 cm}

\noindent 
\mb{{\bf Acknowledgements:} The authors 
would like to thank the anonymous referees and editors for their careful reading and constructive comments and suggestions, which have helped us improve this work.}

\titleformat{\section}{\bfseries}{\appendixname~\thesection .}{0.5em}{}
\titleformat{\subsection}{\normalfont\itshape}{\thesubsection.}{0.5em}{}

\appendices

\section{Proof of Proposition  \ref{prop:regular_ep}}\label{appa}

We first recall \cite[Lemma 3.15]{li2020general1}.

\begin{lemma}\label{lemma:timescaling} 
For $\mu \in \ce_\infty([0,1];\ms{G}_0,\ms{G}_1)$, let $\ms{s}(t):[0,1] \to [a,b]$ be strictly increasing absolutely continuous with an absolutely continuous inverse $\ms{t} = \ms{s}^{-1}$. Then,
     $\w{\mu} := \int_a^b \d_s \otimes (\ms{G}_{\ms{t}(s)}, \ms{t}'(s) \ms{q}_{\ms{t}(s)},  \ms{t}'(s) \ms{R}_{\ms{t}(s)})\, \rd s \in \ce([a,b]; \ms{G}_0,\ms{G}_1)$ and
    \begin{align}  \label{eq:timescinlemma}
        \int_0^1 \ms{t}'(\ms{s}(t)) \jj_{\Lad,\Omega}(\mu_t)\, \rd t = \int_a^b \jj_{\Lad,\Omega}(\w{\mu}_s) \,\rd s\,.
    \end{align} 
In addition, for a diffeomorphism $T$ on $\R^d$, suppose that there exists $\TT_{\ms{D^*}}(x): \Omega \to \L(\R^{n \t k})$ such that
\begin{align} \label{def:tdlinear}
    \TT_{\ms{D^*}}[(\ms{D^*} \Phi)\circ T] := \ms{D^*} (\Phi\circ T)\,,\q \forall\Phi \in C_c^\infty(\R^d, \S^n)\,.
\end{align}
Then, $\w{\mu} := \int_0^1 \d_t \otimes T_{\#} (\ms{G}_{t}, \TT_{\ms{D}} \ms{q}_{t},  \ms{R}_{t})\, \rd t \in \ce([0,1]; T_{\#}\ms{G}_0, T_{\#}\ms{G}_1)$ on $T(\Omega)$, where $T_{\#}(\dd)$ is the pushforward by $T$  and $\TT_{\ms{D}}$ is the transpose of $\TT_{\ms{D}^*}$, i.e., $ (\TT_{\ms{D}} q) \dd p = q \dd (\TT_{\ms{D}^*} p)$ for any $p,q \in \R^{n \t k}$. 
\end{lemma}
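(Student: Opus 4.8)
The statement is \cite[Lemma 3.15]{li2020general1}; the plan to prove it is as follows. Both assertions come from substituting a transformed test function into the weak continuity equation \eqref{eq:weak_ctneq} for $\mu$ and changing variables, relying on the disintegration $\mu = \int_0^1 \d_t \otimes \mu_t\,\rd t$ with weak* continuous $t \mapsto \ms{G}_t$, available for $\mu \in \ce_\infty([0,1];\ms{G}_0,\ms{G}_1)$ by \cite[Proposition 3.13]{li2020general1}. The first preparatory step is to extend \eqref{eq:weak_ctneq} from $\Phi \in C^1(Q,\S^n)$ to test functions that are absolutely continuous in time and $C^1$ in space; I would do this by mollifying $\Phi$ in the time variable and passing to the limit, using the weak* continuity of $\ms{G}_t$, the uniform bound $\sup_t\norm{\ms{G}_t}_{\rm TV} < +\infty$, and the integrability $\int_0^1\big(\norm{\ms{q}_t}_{\rm TV}+\norm{\ms{R}_t}_{\rm TV}\big)\,\rd t < +\infty$, the latter a consequence of $\jj_{\Lad,Q}(\mu)<+\infty$ through the Fenchel/Cauchy--Schwarz estimate underlying \eqref{eq:fenconj_cost}.

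For the time-rescaling part I would take $\Psi \in C^1([a,b]\t\Omega,\S^n)$ and set $\Phi(t,x) := \Psi(\ms{s}(t),x)$, so that $\p_t\Phi(t,x) = \ms{s}'(t)(\p_s\Psi)(\ms{s}(t),x)$ a.e. and $\ms{D}^*\Phi(t,\cdot) = (\ms{D}^*\Psi)(\ms{s}(t),\cdot)$. Inserting $\Phi$ into \eqref{eq:weak_ctneq}, the $\p_t\Phi$-term carries a factor $\ms{s}'(t)$ and, after the substitution $s=\ms{s}(t)$, becomes $\int_a^b\int_\Omega (\p_s\Psi)(s,\cdot)\dd\rd\ms{G}_{\ms{t}(s)}\,\rd s$, whereas the $\ms{D}^*\Phi$- and $\Phi$-terms pick up the Jacobian $\ms{t}'(s)$ and turn into the integrals against $\ms{t}'(s)\ms{q}_{\ms{t}(s)}$ and $\ms{t}'(s)\ms{R}_{\ms{t}(s)}$; since $\ms{t}(a)=0$ and $\ms{t}(b)=1$, the boundary data stay $\ms{G}_0,\ms{G}_1$. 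This is exactly the weak continuity equation for $\w\mu$ on $[a,b]$. For the cost identity \eqref{eq:timescinlemma} I would fix a reference measure $\lambda_t$ with $|\mu_t|\ll\lambda_t$; the density of $\w\mu_s$ with respect to $\lambda_{\ms{t}(s)}$ is $\big(G_{\ms{t}(s),\lambda},\,\ms{t}'(s)q_{\ms{t}(s),\lambda},\,\ms{t}'(s)R_{\ms{t}(s),\lambda}\big)$, and the explicit form \eqref{eq:expre_J} gives $J_\Lad(X,\alpha Y,\alpha Z)=\alpha^2 J_\Lad(X,Y,Z)$ (compatibly with the positive $1$-homogeneity of $J_\Lad$ in \cite[Proposition 3.1]{li2020general1}); hence $\jj_{\Lad,\Omega}(\w\mu_s)=\ms{t}'(s)^2\jj_{\Lad,\Omega}(\mu_{\ms{t}(s)})$. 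Integrating over $[a,b]$ and substituting $s=\ms{s}(t)$, together with $\ms{s}'(t)=1/\ms{t}'(\ms{s}(t))$, gives \eqref{eq:timescinlemma}.

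For the spatial pushforward part I would take $\Psi\in C^1([0,1]\t T(\Omega),\S^n)$ and set $\Phi(t,x):=\Psi(t,T(x))$; then $\p_t\Phi(t,x)=(\p_t\Psi)(t,T(x))$, $\Phi(t,x)=\Psi_t(T(x))$, and, by the hypothesis \eqref{def:tdlinear}, $\ms{D}^*\Phi(t,\cdot)=\ms{D}^*(\Psi_t\circ T)=\TT_{\ms{D^*}}\big[(\ms{D}^*\Psi_t)\circ T\big]$. Substituting $\Phi$ into \eqref{eq:weak_ctneq}, I would move $\TT_{\ms{D^*}}$ off the test function onto $\ms{q}_t$ via the transpose relation $\TT_{\ms{D^*}}[p]\dd q = p\dd(\TT_{\ms{D}}q)$, and then apply the pushforward change of variables $\int_\Omega(f\circ T)\,\rd\nu=\int_{T(\Omega)}f\,\rd(T_{\#}\nu)$ to each of the three bulk terms and the two boundary terms. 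The outcome is precisely the weak continuity equation on $T(\Omega)$ for the triple $\big(T_{\#}\ms{G}_t,\,T_{\#}(\TT_{\ms{D}}\ms{q}_t),\,T_{\#}\ms{R}_t\big)$ with endpoints $T_{\#}\ms{G}_0$ and $T_{\#}\ms{G}_1$; since pushforward acts linearly on the matrix entries, it preserves $\S^n_+$-valuedness, so $\w\mu\in\ce([0,1];T_{\#}\ms{G}_0,T_{\#}\ms{G}_1)$.

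The hard part will be the preliminary extension of \eqref{eq:weak_ctneq} to absolutely-continuous-in-time test functions, which is genuinely needed since $\ms{s}$, hence $\Phi(t,x)=\Psi(\ms{s}(t),x)$, need not be Lipschitz in $t$; it is here that the structural facts about $\ce_\infty$ (disintegration, weak* continuity of $\ms{G}_t$, and $t$-integrability of $\norm{\ms{q}_t}_{\rm TV}+\norm{\ms{R}_t}_{\rm TV}$) are actually used, the remainder being routine changes of variables combined with the homogeneity of $J_\Lad$. A minor extra point is to check the pointwise-in-$s$ identity $\jj_{\Lad,\Omega}(\w\mu_s)=\ms{t}'(s)^2\jj_{\Lad,\Omega}(\mu_{\ms{t}(s)})$ with the required measurability, and to note that $\w\mu$ inherits finite total cost, so $\w\mu\in\ce_\infty$ a posteriori.
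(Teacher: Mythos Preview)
The paper does not prove this lemma; it merely recalls it from the companion work \cite{li2020general1} (see the line ``We first recall \cite[Lemma 3.15]{li2020general1}'' immediately preceding the statement). So there is no proof in the present paper to compare against.

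Your proposed argument is the natural one and is correct in outline. Substituting $\Phi(t,x)=\Psi(\ms{s}(t),x)$ into the weak continuity equation, changing variables $s=\ms{s}(t)$, and using the degree-$2$ homogeneity of $J_\Lad$ in its last two arguments (immediate from \eqref{eq:expre_J}) yields both $\w\mu\in\ce([a,b];\ms{G}_0,\ms{G}_1)$ and the cost identity \eqref{eq:timescinlemma}; the spatial part follows in the same way from $\Phi(t,x)=\Psi(t,T(x))$, the hypothesis \eqref{def:tdlinear}, and the definition of pushforward. Your observation that the test-function class must first be enlarged to functions absolutely continuous in $t$ (since $\ms{s}$ is only absolutely continuous) is exactly the one nontrivial point; the disintegration from \cite[Proposition~3.13]{li2020general1} together with the $L^1$-in-time bound on $\norm{\ms{q}_t}_{\rm TV}+\norm{\ms{R}_t}_{\rm TV}$ coming from $\jj_{\Lad,Q}(\mu)<+\infty$ is the right tool for the mollification argument you sketch. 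One small remark: you do not need to conclude $\w\mu\in\ce_\infty$ a posteriori, since the lemma only asserts $\w\mu\in\ce$; the cost identity \eqref{eq:timescinlemma} is stated separately and does not require finiteness of either side.
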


We now prove Proposition \ref{prop:regular_ep}.
We extend the measure $\mu = \{\mu_t\}_{t \in [0,1]} \in \ce_\infty([0,1],\ms{G}_0,\ms{G}_1)$ by, for $t \in \R$,
\begin{align*}
    \mu_t: = \mu_t\chi_{(0,1)}(t) + (\ms{G}_0,0,0)\chi_{(-\infty,0]}(t) + (\ms{G}_1,0,0)\chi_{[1,+\infty)}(t)\,,
\end{align*} 
and regard $\mu_t$ as a measure on $\R^d$ by zero extension.
We hence have $\mu \in \mc{M}(\R^{1+d},\xx)$. 
We define the $\ep$--neighborhood of $\Omega$ by
\begin{equation}\label{auxdef1}
\Omega_\ep = \{x \in \R^d\,; \ {\rm dist}(x, \Omega) \le \ep\}\,.    
\end{equation}
The proof proceeds in four steps.

 \smallskip 
\noindent \emph{Step 1 (lift)}. Let $\ol{G} \in C^\infty_c(\R^{d},\S_+^n)$ be a smooth function supported in $\Omega_1$ and satisfying $\ol{G} \succeq  I$ on $\Omega_{1/2}$. We identify the density $\ol{G}$ and the measure $\ol{G}\, \rd t \otimes \rd x$. For small $\d > 0$, we define   
    \begin{equation} \label{def:liftmu}
      \mu^\d = \ms{(G^\d,q^\d,R^\d)} = ((1- \d)\ms{G} + \d \ol{G}, (1 - \d)\ms{q}, (1- \d) \ms{R})\in \M(\R^{1 + d},\xx)\,,
    \end{equation}
which satisfies the continuity equation
   $\p_t \ms{G}^\d + \ms{D}\ms{q}^\d =  (\ms{R}^\d)^{{\rm sym}}$.
    Note $\ms{G}^\d(E) \succeq \d |E| I$ for any $E \in \mathscr{B}(\R \t \Omega_{1/2})$, where $|E|$ denotes the Lebesgue measure of $E$. 
    Moreover, by the convexity of $\jj_{\Lad, \mathcal{X}}$, we have 
    \begin{equation} \label{est:cost_lift}
        \jj_{\Lad, \R^{1 + d}}(\mu^\d) \le (1 - \d)\jj_{\Lad,\R^{1 + d}}(\mu) + \d \jj_{\Lad,\R^{1 + d}}(\ol{G},0,0) = (1 - \d)\jj_{\Lad,Q}(\mu) < +\infty\,.
    \end{equation}

    \noindent \emph{Step 2 (time-space regularization)}. We introduce a 
family of convolution kernels 
\begin{equation}\label{auxdef2}
\theta_d^\ep(x) := \ep^{-d}\theta_d(x/\ep)\,,    
\end{equation}
where $\theta_d : \R^{d} \to \R_+$ is a $C^\infty$-smooth radial nonnegative function supported in $B_d(0,1)$ with $\int_{\R^{d}} \theta_d(x) \rd x = 1$.  Then, the convolution kernel on $\R^{1 + d}$ is defined by $\rho_\ep (t,x): = \theta^\ep_1(t)\theta^\ep_d(x)$. 
To alleviate notations, in the rest of this proof, we still denote by $\mu$ the measure $\mu^\d$ in \eqref{def:liftmu}. We now define  
     $\w{\mu}^\ep := (\rho_\ep * \mu)|_{[-\ep,1+\ep] \t \Omega_\ep}$, which
 has a $C^\infty$-smooth density $(\w{G}^\ep,\w{q}^\ep,\w{R}^\ep)$ on $[-\ep,1+\ep] \t \Omega_\ep$ with respect to
$\rd t \otimes \rd x$ with $\w{G}^\ep \succeq \d I$. Then, by the linearity of the continuity equation and basic properties of the convolution, there holds $\w{\mu}^\ep = \big\{(\w{\ms{G}}^\ep_t, \w{\ms{q}}^\ep_t, \w{\ms{R}}^\ep_t)\big\}_{t \in [-\ep,1+\ep]} \in \ce\big([a,b]; \w{\ms{G}}^\ep_t|_{t = a}, \w{\ms{G}}^\ep_t|_{t = b}\big)$ on $\Omega_\ep$ for any $-\ep\le a < b \le 1+\ep$.

\smallskip 
\noindent  \emph{Step 3 (compression)}. By assumption \eqref{1} for $\Omega$, without loss of generality, we let $0$ be in the interior of the set of points with respect to which $\Omega$ is star-shaped. By \cite[Theorem  5.3]{rubinov2013abstract}, the associated  Minkowski functional 
\begin{equation*}
p_{\Omega}(x) : = \inf\{\lad > 0\,; \ x\in \lad \Omega\}     
\end{equation*}
is Lipschitz with the Lipschitz constant denoted by $L$. Thus,  $p_{\Omega}(x) \le 1 + L \ep$ holds for any $x \in \Omega_\ep$. Then, we can define the time-space scaling: 
\begin{equation} \label{aux_tsscaling}
     \ms{s}^\ep(t) = (1 + 2 \ep)^{-1}(t + \ep):[-\ep,1+\ep] \to [0,1]\,,\q 
    T^\ep(x) = (1 + L \ep)^{-1}x: \Omega_\ep \to \Omega\,. 
\end{equation}
By assumption \eqref{2} for $\ms{D}^*$, the linear map $\TT^\ep_{\ms{D}^*}$ for $T^\ep$ satisfying \eqref{def:tdlinear} is given by $\TT^\ep_{\ms{D}^*}(p) = (p_1, (1 + L \ep)^{-1}p_2) $ for $p = (p_1,p_2)$ with $p\in \R^{n \t k_1}$, $p\in \R^{n \t k_2}$, and $k = k_1 + k_2$. Recalling Lemma \ref{lemma:timescaling}, we let  
    \begin{equation} \label{def:timereg}
      \mu^\ep_s :=  T^\ep_{\#} \Big(\w{\ms{G}}^\ep_{\ms{t}^\ep(s)}, (1 + 2 \ep) \TT^\ep_{\ms{D}}\big(\w{\ms{q}}^\ep_{\ms{t}^\ep(s)}\big), (1 + 2 \ep) \w{\ms{R}}^\ep_{\ms{t}^\ep(s)}\Big) \,, \q s \in [0,1]\,,
    \end{equation} 
which satisfies the continuity equation,
where $\ms{t}^\ep = (\ms{s}^\ep)^{-1}$ and $\TT_{\ms{D}}^\ep$ is the transpose of $\TT_{\ms{D}^*}^\ep$. 

\medskip 

\noindent \emph{Step 4 (estimates)}.   We proceed to estimate the action functional: for some $C > 0$,
    \begin{align} \label{est:cost_regul}
        \jj_{\Lad, Q}(\mu^\ep) & =   \int_0^1 \jj_{\Lad, \Omega}(\mu_t^\ep)\, \rd t = (1 + 2 \ep) \int_{-\ep}^{1+\ep} \jj_{\Lad,\Omega} \left(T^\ep_{\#} \left(\w{\ms{G}}^\ep_{s},  \TT^\ep_{\ms{D}}\big(\w{\ms{q}}^\ep_{s}\big), \w{\ms{R}}^\ep_{s}\right)\right)\rd s \notag\\
    & \le (1 + 2 \ep) \int_{-\ep}^{1+\ep}  \jj_{\Lad, \Omega^\ep} \Big(\Big(\w{\ms{G}}^\ep_{s},  \TT^\ep_{\ms{D}}\big(\w{\ms{q}}^\ep_{s}\big), \w{\ms{R}}^\ep_{s}\Big)\Big)\, \rd s \notag\\
    & \le (1 + C \ep) \int_{-\ep}^{1+\ep}  \jj_{\Lad, \Omega^\ep} \left(\left(\w{\ms{G}}^\ep_{s},  \w{\ms{q}}^\ep_{s}, \w{\ms{R}}^\ep_{s}\right)\right) \rd s \notag \\
    & \le (1 + C \ep)  \jj_{\Lad,\R^{1+d}}(\rho_\ep*\mu)\,,
    \end{align}
where the first line is from \eqref{eq:timescinlemma}; the second line is from the change of variables $x = T^\ep(y)$;
the third line is by definition of $\TT_{\ms{D}}^\ep$ and $\jj_{\Lad,\Omega^\ep}$; the fourth inequality is from the fact that $\w{\mu}^\ep$ is the restriction of $\rho_\ep * \mu$ on $[-\ep,1+\ep] \t \Omega_\ep$. It is easy to show that the action functional $\jj_{\Lad,\R^{1 + d}}$ is monotone with respect to the smoothing by convolution; see \cite[Lemma 8.1.10]{ambrosio2008gradient}. 
Then, by \eqref{est:cost_lift} and \eqref{est:cost_regul}, we arrive at 
\begin{align} \label{est:one-side_action}
    \jj_{\Lad, Q}(\mu^\ep) \le (1 + C \ep)  (1 - \d)\jj_{\Lad,Q}(\mu)\,.
\end{align}

We next set $\d = \ep$ and show that $\mu^\ep$ constructed above weak* converges to the measure $\mu$, which, by \eqref{est:one-side_action} and the lower semicontinuity of $\jj_{\Lad,Q}(\mu)$,  readily implies 
\begin{equation*}
    \lim_{\ep \to 0} \jj_{\Lad, Q}(\mu^{\ep}) = \jj_{\Lad, Q}(\mu)\,.
\end{equation*}
It suffices to consider the weak* convergence of $\ms{G}^\ep$ since the same analysis applies to $\ms{q}^\ep$ and $\ms{R}^\ep$. We write 
$\T^\ep(t,x) = (\ms{s}^\ep(t),T^\ep(x))$ with $\ms{s}^\ep$ and $T^\ep$ given in \eqref{aux_tsscaling}. By the construction of $\mu^\ep$, a direct computation gives
\begin{align} \label{auxest_convol}
 |\l \ms{G}^\ep, \Phi \r_{Q} - \l \ms{G}, \Phi \r_{Q}| 
 & = \left|\frac{1}{1 + 2 \ep} \int_{[-\ep,1+\ep] \t \Omega_\ep} \Phi \circ \T^\ep \, \rd \left(\rho_\ep * \left((1- \ep)\ms{G} + \ep \ol{G}\right)\right)   - \l \ms{G}, \Phi \r_{Q} \right| \notag \\
 & \le  \left|\frac{1 - \ep}{1 + 2 \ep} \int_{\R^{1+d}} \rho_\ep *\left(\chi_{[-\ep,1+\ep] \t \Omega_\ep} \Phi\circ \T^\ep  \right) \rd \ms{G} - \l \ms{G}, \Phi \r_{Q} \right| + C \ep\,, 
\end{align} 
for $\Phi \in C(Q,\S^n)$, where the constant $C$ depends on $\norm{\Phi}_\infty$ and $\norm{\ol{G}}_\infty$. Moreover, we note that
for any $(t,x) \in Q$ and $(s,y) \in B_1(t,\ep) \t B_d(x,\ep)$, there hold $\T^\ep(s,y) \in Q$ and 
\begin{align*}
    |\T^\ep(s,y) - (t,x)| \le \left|\frac{s + \ep - t - 2 \ep t}{1 + 2 \ep}\right| + \left|\frac{y - x - L \ep x}{1 + L \ep}\right|  \le C \ep \,.
\end{align*}
Then, by uniform continuity of $\Phi$ on $Q$, we have the pointwise convergence: 
\begin{align*}
    & \limsup_{\ep \to 0} \normm{\rho_\ep *(\chi_{[-\ep,1+\ep] \t \Omega_\ep} \Phi \circ \T^\ep )(t,x) - \Phi(t,x)}_{\ff} \\
\le & \limsup_{\ep \to 0} \int_{t - \ep}^{t + \ep} \int_{B_d(x,\ep)} \rho_\ep(t-s,x-y) \normm{\Phi \circ \T^\ep(s,y) - \Phi(t,x)}_\ff\, \rd y \rd s = 0\,,
\end{align*}
which, along with \eqref{auxest_convol}, implies $\lim_{\ep \to 0} \l \ms{G}^\ep, \Phi \r_{Q} = \l \ms{G}, \Phi \r_{Q}$. 

For the third statement in the proposition, we only consider the convergence of the initial distribution at $t = 0$. By the construction, we have
\begin{align} \label{def:epregend_points}
    \ms{G}^\ep|_{t = 0} =  T_{\#}^\ep(\theta_d^\ep * ((1 - \ep)\ms{G}_0 + \ep \ol{G})|_{\Omega_\ep})\,.
\end{align}
A similar estimate as above shows the weak* convergence of $\ms{G}^\ep|_{t = 0}$. We next estimate ${\rm WB}_\Lad(\ms{G}^\ep|_{t = 0},\ms{G}_0)$ under the assumption \eqref{3}. For this, by estimates \eqref{est:lip_squre} and \eqref{eq:basic_bound},   we have 
\begin{align} \label{est:intial_pre}
    {\rm WB}_{\Lad}(\ms{G}^\ep|_{t = 0}, \ms{G}_0) &\lesssim \int_{\Omega}  \normm{(\theta^\ep_d * G_0) \circ (T^\ep)^{-1}  - G_0}_\ff \rd x + \ep \notag \\
    & \lesssim \int_{\Omega_\ep} \normm{ \theta^\ep_d * G_0 - G_0 \circ T^\ep }_\ff \rd x +  \ep  \notag \\
 & \lesssim \norm{\theta^\ep_d * G_0 - G_0}_{L^1(\R^d)} + \norm{G_0 -   G_0 \circ T^\ep}_{L^1(\Omega_\ep)} + \ep \,. 
\end{align}
Then, the property of \emph{approximations to the identity} gives $\norm{\theta^\ep_d * G_0 - G_0}_{L^1(\R^d)} \to 0$ as $\ep \to 0$, while by the density of $C^\infty_c(\R^d,\S^n)$ in  
$L^1(\R^d,\S^n)$, it is easy to see $\norm{G_0 - G_0 \circ T^\ep}_{L^1(\Omega_\ep)} \to 0$ as $\ep \to 0$. Hence, \eqref{est:intial_pre} gives \eqref{est:conver_ini} as desired and completes the proof.


\begin{thebibliography}{10}

\bibitem{ambrosio2008gradient}
L.~Ambrosio, N.~Gigli, and G.~Savar{\'e}.
\newblock {\em Gradient flows: in metric spaces and in the space of probability measures}.
\newblock Springer Science \& Business Media, 2005.

\bibitem{benamou2003numerical}
J.-D. Benamou.
\newblock Numerical resolution of an “unbalanced” mass transport problem.
\newblock {\em ESAIM: Mathematical Modelling and Numerical Analysis-Mod{\'e}lisation Math{\'e}matique et Analyse Num{\'e}rique}, 37(5):851--868, 2003.

\bibitem{benamou2000computational}
J.-D. Benamou and Y.~Brenier.
\newblock A computational fluid mechanics solution to the monge-kantorovich mass transfer problem.
\newblock {\em Numerische Mathematik}, 84(3):375--393, 2000.

\bibitem{benamou2015augmented}
J.-D. Benamou and G.~Carlier.
\newblock Augmented lagrangian methods for transport optimization, mean field games and degenerate elliptic equations.
\newblock {\em Journal of Optimization Theory and Applications}, 167:1--26, 2015.

\bibitem{bhatia2013matrix}
R.~Bhatia.
\newblock {\em Matrix analysis}, volume 169.
\newblock Springer Science \& Business Media, 2013.

\bibitem{bouchitte2020convex}
G.~Bouchitt{\'e}.
\newblock Convex analysis and duality.
\newblock {\em arXiv preprint arXiv:2004.09330}, 2020.

\bibitem{brenier2018initial}
Y.~Brenier.
\newblock The initial value problem for the euler equations of incompressible fluids viewed as a concave maximization problem.
\newblock {\em Communications in Mathematical Physics}, 364(2):579--605, 2018.

\bibitem{brenier2020optimal}
Y.~Brenier and D.~Vorotnikov.
\newblock On optimal transport of matrix-valued measures.
\newblock {\em SIAM Journal on Mathematical Analysis}, 52(3):2849--2873, 2020.

\bibitem{brenner2008mathematical}
S.~C. Brenner.
\newblock {\em The mathematical theory of finite element methods}.
\newblock Springer, 2008.

\bibitem{caffarelli2010free}
L.~A. Caffarelli and R.~J. McCann.
\newblock Free boundaries in optimal transport and monge-ampere obstacle problems.
\newblock {\em Annals of mathematics}, pages 673--730, 2010.

\bibitem{carlen2014analog}
E.~A. Carlen and J.~Maas.
\newblock An analog of the 2-wasserstein metric in non-commutative probability under which the fermionic fokker--planck equation is gradient flow for the entropy.
\newblock {\em Communications in mathematical physics}, 331(3):887--926, 2014.

\bibitem{carlen2017gradient}
E.~A. Carlen and J.~Maas.
\newblock Gradient flow and entropy inequalities for quantum markov semigroups with detailed balance.
\newblock {\em Journal of Functional Analysis}, 273(5):1810--1869, 2017.

\bibitem{carrillo2022primal}
J.~A. Carrillo, K.~Craig, L.~Wang, and C.~Wei.
\newblock Primal dual methods for wasserstein gradient flows.
\newblock {\em Foundations of Computational Mathematics}, pages 1--55, 2022.

\bibitem{chen2020matrix}
Y.~Chen, W.~Gangbo, T.~T. Georgiou, and A.~Tannenbaum.
\newblock On the matrix monge--kantorovich problem.
\newblock {\em European Journal of Applied Mathematics}, 31(4):574--600, 2020.

\bibitem{chen2017matrix}
Y.~Chen, T.~T. Georgiou, and A.~Tannenbaum.
\newblock Matrix optimal mass transport: a quantum mechanical approach.
\newblock {\em IEEE Transactions on Automatic Control}, 63(8):2612--2619, 2017.

\bibitem{chen2019interpolation}
Y.~Chen, T.~T. Georgiou, and A.~Tannenbaum.
\newblock Interpolation of matrices and matrix-valued densities: The unbalanced case.
\newblock {\em European Journal of Applied Mathematics}, 30(3):458--480, 2019.

\bibitem{chizat2018interpolating}
L.~Chizat, G.~Peyr{\'e}, B.~Schmitzer, and F.-X. Vialard.
\newblock An interpolating distance between optimal transport and fisher--rao metrics.
\newblock {\em Foundations of Computational Mathematics}, 18(1):1--44, 2018.

\bibitem{chizat2018scaling}
L.~Chizat, G.~Peyr{\'e}, B.~Schmitzer, and F.-X. Vialard.
\newblock Scaling algorithms for unbalanced optimal transport problems.
\newblock {\em Mathematics of Computation}, 87(314):2563--2609, 2018.

\bibitem{chizat2018unbalanced}
L.~Chizat, G.~Peyr{\'e}, B.~Schmitzer, and F.-X. Vialard.
\newblock Unbalanced optimal transport: Dynamic and kantorovich formulations.
\newblock {\em Journal of Functional Analysis}, 274(11):3090--3123, 2018.

\bibitem{duran1997lpspace}
A.~J. Duran and P.~Lopez-Rodriguez.
\newblock The lpspace of a positive definite matrix of measures and density of matrix polynomials inl1.
\newblock {\em journal of approximation theory}, 90(2):299--318, 1997.

\bibitem{erbar2020computation}
M.~Erbar, M.~Rumpf, B.~Schmitzer, and S.~Simon.
\newblock Computation of optimal transport on discrete metric measure spaces.
\newblock {\em Numerische Mathematik}, 144(1):157--200, 2020.

\bibitem{evans2015measure}
L.~C. Evans and R.~F. Gariepy.
\newblock {\em Measure theory and fine properties of functions}.
\newblock CRC press, 2015.

\bibitem{figalli2010optimal}
A.~Figalli.
\newblock The optimal partial transport problem.
\newblock {\em Archive for rational mechanics and analysis}, 195(2):533--560, 2010.

\bibitem{figalli2010new}
A.~Figalli and N.~Gigli.
\newblock A new transportation distance between non-negative measures, with applications to gradients flows with dirichlet boundary conditions.
\newblock {\em Journal de math{\'e}matiques pures et appliqu{\'e}es}, 94(2):107--130, 2010.

\bibitem{fu2023high}
G.~Fu, S.~Osher, and W.~Li.
\newblock High order spatial discretization for variational time implicit schemes: Wasserstein gradient flows and reaction-diffusion systems.
\newblock {\em arXiv preprint arXiv:2303.08950}, 2023.

\bibitem{gangbo2019unnormalized}
W.~Gangbo, W.~Li, S.~Osher, and M.~Puthawala.
\newblock Unnormalized optimal transport.
\newblock {\em Journal of Computational Physics}, 399:108940, 2019.

\bibitem{gao2022master}
Y.~Gao, W.~Li, and J.-G. Liu.
\newblock Master equations for finite state mean field games with nonlinear activations.
\newblock {\em arXiv preprint arXiv:2212.05675}, 2022.

\bibitem{gigli2013gromov}
N.~Gigli and J.~Maas.
\newblock Gromov--hausdorff convergence of discrete transportation metrics.
\newblock {\em SIAM Journal on Mathematical Analysis}, 45(2):879--899, 2013.

\bibitem{gladbach2020scaling}
P.~Gladbach, E.~Kopfer, and J.~Maas.
\newblock Scaling limits of discrete optimal transport.
\newblock {\em SIAM Journal on Mathematical Analysis}, 52(3):2759--2802, 2020.

\bibitem{golse2016mean}
F.~Golse, C.~Mouhot, and T.~Paul.
\newblock On the mean field and classical limits of quantum mechanics.
\newblock {\em Communications in Mathematical Physics}, 343:165--205, 2016.

\bibitem{golse2017schrodinger}
F.~Golse and T.~Paul.
\newblock The schr{\"o}dinger equation in the mean-field and semiclassical regime.
\newblock {\em Archive for Rational Mechanics and Analysis}, 223:57--94, 2017.

\bibitem{golse2018wave}
F.~Golse and T.~Paul.
\newblock Wave packets and the quadratic monge--kantorovich distance in quantum mechanics.
\newblock {\em Comptes Rendus Mathematique}, 356(2):177--197, 2018.

\bibitem{gross1975hypercontractivity}
L.~Gross.
\newblock Hypercontractivity and logarithmic sobolev inequalities for the clifford-dirichlet form.
\newblock {\em Duke Mathematical Journal}, 42(3):383--396, 1975.

\bibitem{guittet2003time}
K.~Guittet.
\newblock On the time-continuous mass transport problem and its approximation by augmented lagrangian techniques.
\newblock {\em SIAM Journal on Numerical Analysis}, 41(1):382--399, 2003.

\bibitem{hug2020convergence}
R.~Hug, E.~Maitre, and N.~Papadakis.
\newblock On the convergence of augmented lagrangian method for optimal transport between nonnegative densities.
\newblock {\em Journal of Mathematical Analysis and Applications}, 485(2):123811, 2020.

\bibitem{kantorovich1942translocation}
L.~V. Kantorovich.
\newblock On the translocation of masses.
\newblock In {\em Dokl. Akad. Nauk. USSR (NS)}, volume~37, pages 199--201, 1942.

\bibitem{kastoryano2013quantum}
M.~J. Kastoryano and K.~Temme.
\newblock Quantum logarithmic sobolev inequalities and rapid mixing.
\newblock {\em Journal of Mathematical Physics}, 54(5):052202, 2013.

\bibitem{kondratyev2016new}
S.~Kondratyev, L.~Monsaingeon, D.~Vorotnikov, et~al.
\newblock A new optimal transport distance on the space of finite radon measures.
\newblock {\em Advances in Differential Equations}, 21(11/12):1117--1164, 2016.

\bibitem{lavenant2019unconditional}
H.~Lavenant.
\newblock Unconditional convergence for discretizations of dynamical optimal transport.
\newblock {\em Mathematics of Computation}, 90(328):739--786, 2021.

\bibitem{lavenant2018dynamical}
H.~Lavenant, S.~Claici, E.~Chien, and J.~Solomon.
\newblock Dynamical optimal transport on discrete surfaces.
\newblock {\em ACM Transactions on Graphics (TOG)}, 37(6):1--16, 2018.

\bibitem{le2014diffusion}
D.~Le~Bihan.
\newblock Diffusion mri: what water tells us about the brain.
\newblock {\em EMBO molecular medicine}, 6(5):569--573, 2014.

\bibitem{lee2021generalized}
W.~Lee, R.~Lai, W.~Li, and S.~Osher.
\newblock Generalized unnormalized optimal transport and its fast algorithms.
\newblock {\em Journal of Computational Physics}, 436:110041, 2021.

\bibitem{li2022interpolation}
B.~Li and J.~Lu.
\newblock Interpolation between modified logarithmic sobolev and poincare inequalities for quantum markovian dynamics.
\newblock {\em Journal of Statistical Physics}, 2023.

\bibitem{li2020general1}
B.~Li and J.~Zou.
\newblock On a general matrix-valued unbalanced optimal transport problem.
\newblock {\em arXiv preprint arXiv:2011.05845}, 2023.

\bibitem{li2022computational}
W.~Li, W.~Lee, and S.~Osher.
\newblock Computational mean-field information dynamics associated with reaction-diffusion equations.
\newblock {\em Journal of Computational Physics}, 466:111409, 2022.

\bibitem{liero2016optimal}
M.~Liero, A.~Mielke, and G.~Savar{\'e}.
\newblock Optimal transport in competition with reaction: The hellinger--kantorovich distance and geodesic curves.
\newblock {\em SIAM Journal on Mathematical Analysis}, 48(4):2869--2911, 2016.

\bibitem{liero2018optimal}
M.~Liero, A.~Mielke, and G.~Savar{\'e}.
\newblock Optimal entropy-transport problems and a new hellinger--kantorovich distance between positive measures.
\newblock {\em Inventiones mathematicae}, 211(3):969--1117, 2018.

\bibitem{lombardi2015eulerian}
D.~Lombardi and E.~Maitre.
\newblock Eulerian models and algorithms for unbalanced optimal transport.
\newblock {\em ESAIM: Mathematical Modelling and Numerical Analysis-Mod{\'e}lisation Math{\'e}matique et Analyse Num{\'e}rique}, 49(6):1717--1744, 2015.

\bibitem{maas2011gradient}
J.~Maas.
\newblock Gradient flows of the entropy for finite markov chains.
\newblock {\em Journal of Functional Analysis}, 261(8):2250--2292, 2011.

\bibitem{maas2015generalized}
J.~Maas, M.~Rumpf, C.~Sch{\"o}nlieb, and S.~Simon.
\newblock A generalized model for optimal transport of images including dissipation and density modulation.
\newblock {\em ESAIM: Mathematical Modelling and Numerical Analysis}, 49(6):1745--1769, 2015.

\bibitem{mitrea2013distributions}
D.~Mitrea.
\newblock {\em Distributions, partial differential equations, and harmonic analysis}.
\newblock Springer, 2013.

\bibitem{monge1781memoire}
G.~Monge.
\newblock M{\'e}moire sur la th{\'e}orie des d{\'e}blais et des remblais.
\newblock {\em Histoire de l'Acad{\'e}mie Royale des Sciences de Paris}, 1781.

\bibitem{monsaingeon2020schr}
L.~Monsaingeon and D.~Vorotnikov.
\newblock The schr{\"o}dinger problem on the non-commutative fisher-rao space.
\newblock {\em Calculus of Variations and Partial Differential Equations}, 60(1):14, 2021.

\bibitem{natale2021computation}
A.~Natale and G.~Todeschi.
\newblock Computation of optimal transport with finite volumes.
\newblock {\em ESAIM: Mathematical Modelling and Numerical Analysis}, 55(5):1847--1871, 2021.

\bibitem{natale2022mixed}
A.~Natale and G.~Todeschi.
\newblock A mixed finite element discretization of dynamical optimal transport.
\newblock {\em Journal of Scientific Computing}, 91(2):38, 2022.

\bibitem{papadakis2014optimal}
N.~Papadakis, G.~Peyr{\'e}, and E.~Oudet.
\newblock Optimal transport with proximal splitting.
\newblock {\em SIAM Journal on Imaging Sciences}, 7(1):212--238, 2014.

\bibitem{peyre2019quantum}
G.~Peyr{\'e}, L.~Chizat, F.-X. Vialard, and J.~Solomon.
\newblock Quantum entropic regularization of matrix-valued optimal transport.
\newblock {\em European Journal of Applied Mathematics}, 30(6):1079--1102, 2019.

\bibitem{piccoli2014generalized}
B.~Piccoli and F.~Rossi.
\newblock Generalized wasserstein distance and its application to transport equations with source.
\newblock {\em Archive for Rational Mechanics and Analysis}, 211(1):335--358, 2014.

\bibitem{piccoli2016properties}
B.~Piccoli and F.~Rossi.
\newblock On properties of the generalized wasserstein distance.
\newblock {\em Archive for Rational Mechanics and Analysis}, 222(3):1339--1365, 2016.

\bibitem{powers1970free}
R.~T. Powers and E.~St{\o}rmer.
\newblock Free states of the canonical anticommutation relations.
\newblock {\em Communications in Mathematical Physics}, 16(1):1--33, 1970.

\bibitem{rubinov2013abstract}
A.~M. Rubinov.
\newblock {\em Abstract convexity and global optimization}, volume~44.
\newblock Springer Science \& Business Media, 2013.

\bibitem{rudin2006real}
W.~Rudin.
\newblock {\em Real and complex analysis}.
\newblock Tata McGraw-hill education, 2006.

\bibitem{ryu2018vector}
E.~K. Ryu, Y.~Chen, W.~Li, and S.~Osher.
\newblock Vector and matrix optimal mass transport: theory, algorithm, and applications.
\newblock {\em SIAM Journal on Scientific Computing}, 40(5):A3675--A3698, 2018.

\bibitem{teman2012numerical}
R.~Teman.
\newblock {\em Numerical analysis}.
\newblock Springer Science \& Business Media, 2012.

\bibitem{villani2003topics}
C.~Villani.
\newblock {\em Topics in optimal transportation}.
\newblock Number~58 in Graduate Studies in Mathematics. American Mathematical Soc., 2003.

\bibitem{villani2008optimal}
C.~Villani.
\newblock {\em Optimal transport: old and new}, volume 338.
\newblock Springer Science \& Business Media, 2008.

\bibitem{vorotnikov2022partial}
D.~Vorotnikov.
\newblock Partial differential equations with quadratic nonlinearities viewed as matrix-valued optimal ballistic transport problems.
\newblock {\em Archive for Rational Mechanics and Analysis}, 243(3):1653--1698, 2022.

\bibitem{wandell2016clarifying}
B.~A. Wandell.
\newblock Clarifying human white matter.
\newblock {\em Annual review of neuroscience}, 39, 2016.

\end{thebibliography}
\end{document}